\newtheorem{theorem}{Theorem}[section]
\newtheorem{proposition}[theorem]{Proposition}
\newtheorem{corollary}[theorem]{Corollary}
\newtheorem{lemma}[theorem]{Lemma}
\newtheorem{example}[theorem]{Example}
\newtheorem{remark}[theorem]{Remark}
\newtheorem{definition}[theorem]{Definition}
\DeclareMathOperator{\interior}{int}
\DeclareMathOperator{\Id}{Id}
\newcommand{\Li}{\mathcal{L}}
\newcommand{\Ri}{\mathcal{R}}
\newcommand{\cS}{\mathcal{S}}
\newcommand{\Bi}{\mathscr{B}}
\newcommand{\Ci}{\mathcal{C}}
\newcommand{\Si}{\mathcal{S}}
\newcommand{\Hi}{\mathcal{H}}
\newcommand{\Ii}{\mathcal{I}}
\newcommand{\Is}{\mathscr{I}}
\newcommand{\Ui}{\mathscr{U}}
\newcommand{\If}{\mathfrak{I}}
\newcommand{\tl}{\tilde{\lambda}}
\DeclareMathOperator{\supp}{supp}
\DeclareMathOperator{\Leb}{Leb}
\newcommand{\Pcoal}{\mathcal{P}_{\textrm{coal}}}
\newcommand{\ball}{\Ui_{\mathrm{b}}}
\newcommand{\tiff}{if\textcompwordmark f }
\newcommand{\mysetminusD}{\hbox{\tikz{\draw[line width=0.6pt,line cap=round] (3pt,0) -- (0,6pt);}}}
\newcommand{\mysetminusT}{\mysetminusD}
\newcommand{\mysetminusS}{\hbox{\tikz{\draw[line width=0.45pt,line cap=round] (2pt,0) -- (0,4pt);}}}
\newcommand{\mysetminusSS}{\hbox{\tikz{\draw[line width=0.4pt,line cap=round] (1.5pt,0) -- (0,3pt);}}}
\renewcommand{\setminus}{\mathbin{\mathchoice{\mysetminusD}{\mysetminusT}{\mysetminusS}{\mysetminusSS}}}
\begin{document}

\begin{frontmatter}

\title{Exchangeable coalescents, ultrametric spaces, nested interval-partitions: A unifying approach}
\runtitle{Coalescents, ultrametric spaces and combs}


\author{\fnms{F\'elix} \snm{Foutel-Rodier}\ead[label=e1]{felix.foutel-rodier@college-de-france.fr}},
\author{\fnms{Amaury} \snm{Lambert}\ead[label=e2]{amaury.lambert@sorbonne-universite.fr}}
\and
\author{\fnms{Emmanuel} \snm{Schertzer}\ead[label=e3]{emmanuel.schertzer@sorbonne-universite.fr}}

\affiliation{Sorbonne Universit\'e, Coll\`ege de France}

\address{F. \text{Foutel-Rodier}\\
Laboratoire de Probabilit\'es,\\
Statistiques \& Mod\'elisation\\
Sorbonne Universit\'e\\
Box 158\\
4 Place Jussieu\\
75252 PARIS Cedex 05 \\
\printead{e1}}

\address{A. \text{Lambert}\\
Center for Interdisciplinary\\
Research in Biology\\
Coll\`ege de France\\
11, Place Marcelin Berthelot\\
75231 PARIS Cedex 05 \\
\printead{e2}}

\address{E. \text{Schertzer}\\
Center for Interdisciplinary\\ 
Research in Biology\\
Coll\`ege de France\\
11, Place Marcelin Berthelot\\
75231 PARIS Cedex 05 \\
\printead{e3}}

\runauthor{F. Foutel-Rodier, A. Lambert, E. Schertzer}

\begin{abstract}
Kingman's representation theorem \citep{kingman_representation_1978}
states that any exchangeable partition of $\N$ can be represented as a
paintbox based on a random mass-partition.  Similarly, any exchangeable
composition (i.e.\ ordered partition of $\N$) can be represented as a
paintbox based on an interval-partition
\citep{gnedin_representation_1997}. 

Our first main result is that any exchangeable coalescent process (not
necessarily Markovian) can be represented as a paintbox based on a random
non-decreasing process valued in interval-partitions, called nested
interval-partition, generalizing the notion of comb metric space introduced
in~\cite{lambert_comb_2017} to represent compact ultrametric spaces.    

As a special case, we show that any $\Lambda$-coalescent can be obtained
from a paintbox based on a unique random nested interval partition called
$\Lambda$-comb, which is Markovian with explicit transitions. This nested
interval-partition directly relates to the flow of bridges
of~\cite{bertoin_stochastic_2003}. We also display a particularly
simple description of the so-called evolving
coalescent \citep{pfaffelhuber_process_2006} by a comb-valued
Markov process.   

Next, we prove that any ultrametric measure space $U$, under mild
measure-theoretic assumptions on $U$, is the leaf set of a tree
composed of a separable subtree called the backbone, on which are grafted
additional subtrees, which act as star-trees from the standpoint of sampling.
Displaying this so-called weak isometry requires us to extend the
Gromov-weak topology of~\cite{greven_convergence_2006}, that was initially
designed for separable metric spaces, to non-separable ultrametric spaces. It
allows us to show that for any such ultrametric space $U$, there is a nested
interval-partition which is 1) indistinguishable from $U$ in the Gromov-weak
topology; 2) weakly isometric to $U$ if $U$ has a complete backbone; 3) isometric
to $U$ if $U$ is complete and separable.  
\end{abstract}

\begin{keyword}[class=MSC]
\kwd[Primary ]{60G09}
\kwd[; secondary ]{60J35}
\kwd{60C05}
\kwd{54E70.}
\end{keyword}

\begin{keyword}
\kwd{Combs}
\kwd{compositions}
\kwd{nested compositions}
\kwd{Lambda-coalescents}
\kwd{flow of bridges}
\kwd{metric measure spaces}
\kwd{Gromov-weak topology.}
\end{keyword}

\end{frontmatter}

\newpage

\section{Introduction}

    \subsection{Ultrametric spaces and exchangeable coalescents} 
    \label{SS:intro}

In this paper we extend earlier work from~\cite{lambert_comb_2017} on the
comb representation of ultrametric spaces. An ultrametric space is a
metric space $(U,d)$ such that the metric $d$ fulfills the additional
assumption
\[
    \forall x,y,z \in U,\; d(x,y) \le \max(d(x,z), d(z,y)).
\]

In applications, ultrametric spaces are used to model the genealogy of
entities co-existing at the same time. The distance between two points
$x$ and $y$ of an ultrametric space is interpreted as the time to the
most recent common ancestor (MRCA) of $x$ and $y$.  For instance, in
population genetics ultrametric spaces model the genealogy of homologous
genes in a population. Another example can be found in phylogenetics
where ultrametric spaces are used to model the evolutionary relationships
between species. 

In population genetics and more generally in biology we do not have 
access to the entire population (that is to the entire ultrametric space)
but only to a sample from the population. To model the procedure of 
sampling we equip the ultrametric space with a probability measure $\mu$
(also referred to as the sampling measure), yielding the notion of 
ultrametric measure spaces.

\begin{definition} \label{def:UMS}
    A quadruple $(U, d, \Ui, \mu)$ is called an ultrametric measure space
    (UMS) if the following holds.
    \begin{enumerate}
        \item[\rm(i)] The distance $d$ is an ultrametric on $U$ which is $\Ui \otimes \Ui$
            measurable.
        \item[\rm(ii)] The measure $\mu$ is a probability measure defined on $\Ui$.
        \item[\rm(iii)] The family $\Ui$ is a $\sigma$-field such that
            \[
                \forall x \in U,\, \forall t > 0,\; 
                \Set{y \in U\suchthat d(x,y) < t} \in \Ui
            \]
            and $\Ui \subseteq \Bi(U)$, where $\Bi(U)$ is the usual Borel
            $\sigma$-field of $(U, d)$.
    \end{enumerate}
    If $\Ui = \Bi(U)$, we say that $(U, d, \Ui, \mu)$ is a Borel UMS.
\end{definition}

\begin{remark}
    This definition might be surprising as we would naively expect a UMS
    to be any ultrametric space with a probability measure on its Borel
    $\sigma$-field. However the previous naive definition is not
    satisfying for several reasons, that are exposed in
    \Cref{SS:generalUMS}. Notice that if $(U, d)$ is separable, then 
    $\Ui = \Bi(U)$ and point {\rm(i)} always holds. We thus recover the usual
        definition of an ultrametric measure space.
\end{remark}

A sample from a UMS is an i.i.d.\ sequence $(X_i)_{i \ge 1}$
distributed according to $\mu$.  The genealogy of the sample is usually
encoded as a partition-valued process, $(\Pi_t)_{t \ge 0}$ called a
\emph{coalescent}. For any time $t \ge 0$, the blocks of the partition
$\Pi_t$ are given by the following relation
\begin{align} \label{eq:UMSsampling}
    i \sim_{\Pi_t} j \iff d(X_i, X_j) \le t. 
\end{align}
The process $(\Pi_t)_{t \ge 0}$ has two major features. First 
a well-known characteristic of ultrametric spaces is that for a given
$t$ the balls of radius $t$ form a partition of the space that gets coarser
as $t$ increases. This implies that given $s \le t$, the 
partition $\Pi_t$ is coarser than $\Pi_s$. Second, if $\sigma$ denotes
a finite permutation of $\N$ and $\sigma(\Pi_t)$ is the partition
of $\N$ whose blocks are the images by $\sigma$ of the blocks of $\Pi_t$,
we have 
\[
    (\Pi_t)_{t \ge 0} \overset{\text{(d)}}{=} (\sigma(\Pi_t))_{t \ge 0}.
\]
We call any c\`adl\`ag partition valued process that fulfills these two
conditions an \emph{exchangeable coalescent} (note that the process
$(\Pi_t)_{t \ge 0}$ is not necessarily Markovian).

    \subsection{Combs in the compact case}
    \label{SS:introComb}

\begin{figure}
    \begin{center}  
    \includegraphics[width=\textwidth]{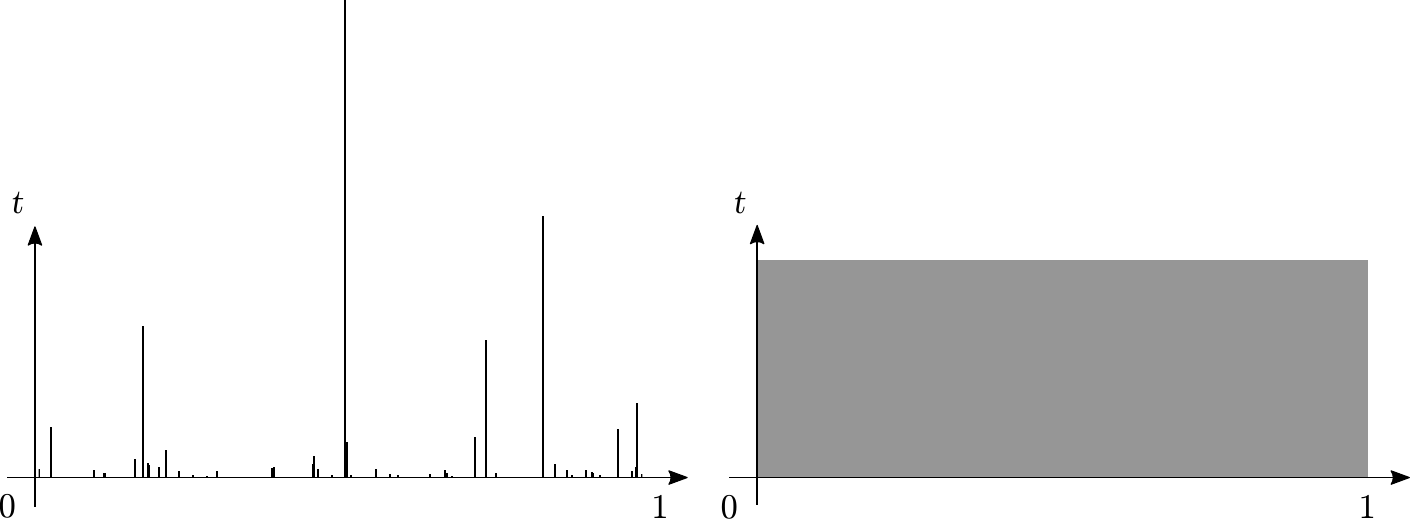}
    \caption{Representation of two nested interval-partitions. A 
    point $(x,t)$ is plotted in dark if $x \not \in I_t$. 
    Left panel: A realization of the Kingman comb, a tooth of
    size $y$ at location $x$ represents that $f(x) = y$. 
    Right panel: The star-tree comb, an example of a nested interval-partition
    that cannot be represented as an original comb.}
    \label{F:kingman}
    \end{center}
\end{figure}
\paragraph{Combs and ultrametric spaces}
In this section, we address similar questions in the much simpler
framework of comb metric spaces which have been introduced recently by
\cite{lambert_comb_2017} to represent \textit{compact} ultrametric
spaces.  A comb is a function
\[
    f \colon [0,1] \to \R_+
\]
such that for any $\epsilon > 0$ the set $\Set{f \ge \epsilon}$
is finite (see \Cref{F:kingman} left panel). To any comb is associated a comb metric $d_f$
on $[0, 1]$ defined as
\[
    \forall x,y \in [0,1],\; \ d_f(x,y) = \Indic{x \ne y} 
    \sup_{\ClosedInterval{x\vee y, x\wedge y}} f.
\]
In general $d_f$ is only a pseudo-metric on $[0, 1]$ and it is 
easy to verify that it is actually ultrametric. 
One of the main results in \cite{lambert_comb_2017} shows that any
compact ultrametric space is isometric to a properly completed and quotiented
comb metric space (see Theorem~3.1 in \cite{lambert_comb_2017}).

\paragraph{Exchangeable coalescents} 
We also will be interested in the
relation between combs and exchangeable coalescents. Any
comb metric space $([0,1], d_f)$ can be naturally endowed with the 
Lebesgue measure on $[0, 1]$. Sampling from a comb can be seen as a 
direct extension of Kingman's paintbox procedure. More precisely, given a comb $f$,
we can generate an exchangeable coalescent $\left(\Pi_t\right)_{t \ge 0}$ by throwing 
i.i.d.\ uniform random variables $(X_i)_{i \ge 1}$ on $[0,1]$ 
and declaring that 
\[
    i \sim_{\Pi_t} j \iff \sup_{[X_i \wedge X_j , X_i \vee X_j]} f \le t. 
\]

For the sake of illustration, we recall the comb representation of the
Kingman coalescent stated in~\cite{kingman_coalescent_1982}. The Kingman
comb is constructed out of an i.i.d.\ sequence $(e_i)_{i \ge 1}$ of
exponential variables with parameter $1$, and of an independent i.i.d.\ sequence
$(U_i)_{i \ge 1}$ of uniform variables on $[0, 1]$. We define the sequence
$(T_i)_{i \ge 2}$ as
\[
    T_i = \sum_{j \ge i} \frac{2}{j(j-1)} e_j.
\]
The Kingman comb $f_K$ is defined as
\[
    f_K = \sum_{i \ge 2} T_i \Indic*{U_i}.
\]
See \Cref{F:kingman} left panel for an illustration of a realization of the
Kingman comb. The paintbox based on $f_K$ is a version of 
the Kingman coalescent (see Section~4.1.3 of \cite{bertoin_2006}).

More generally, the assumption that $\Set{f \ge \epsilon}$
is finite implies that the coalescent $(\Pi_t)_{t \ge 0}$ obtained from a 
paintbox based on $f$ has only finitely many blocks for any $t > 0$.
This property is usually referred to as ``coming down from infinity''. 
It has been shown in \cite{lambert_random_2017} that any coalescent
which comes down from infinity can be represented as a paintbox
based on a comb, see Proposition~3.2.

    \subsection{General combs} \label{SS:generalComb}

One of the objectives of this work is to extend Theorem~3.1 of
\cite{lambert_comb_2017} and Proposition~3.2 of
\cite{lambert_random_2017} to any ultrametric space (not only compact)
and to any exchangeable coalescent (i.e., beyond the ``coming down from
infinity'' property).  From a technical point of view, we note that this
extension is conceptually harder, and requires the technology of
exchangeable nested compositions which were absent in
\cite{lambert_comb_2017}. This point will be discussed further in
\Cref{SS:composition}.

In order to deal with non-compact metric spaces, we need to generalize
the definition of a comb by relaxing the condition on the finiteness of
$\Set{f \ge \epsilon}$. We will encode combs as functions taking values
in the open subsets of $\OpenInterval{0, 1}$. Any open subset $I$ of
$\OpenInterval{0, 1}$ can be decomposed into an at-most countable union
of disjoint intervals denoted by $(I_i)_{i \ge 1}$. For this reason we
will call an open subset of $\OpenInterval{0, 1}$ an
\emph{interval-partition} and each of the intervals $I_i$ is an
\emph{interval component} of $I$. The space of interval-partitions is
conveniently topologized with the Hausdorff distance on the complement,
$d_H$, defined as
\[
    d_H(I, \tilde{I}) = \sup \Set{d(x, [0,1] \setminus \tilde{I}), x \not\in I} 
        \vee \sup \Set{d(x, [0, 1] \setminus I), x \not\in \tilde{I}}.
\]

We propose to generalize the notion of comb to the notion
of \emph{nested interval-partition}.
\begin{definition}
    A nested interval-partition is a c\`adl\`ag function $(I_t)_{t \ge
    0}$ taking values in the open subsets of $\OpenInterval{0, 1}$
    verifying
    \[
        \forall s \le t,\; I_s \subseteq I_t.
    \]
    Sometimes nested interval-partitions will be called generalized combs
    or even simply combs.
\end{definition}

Let us briefly see how this definition extends the initial comb of
\cite{lambert_comb_2017}. Starting from a comb function $f$, we can
build a nested interval-partition $(I_t)_{t \ge 0}$ as follows
\[
    \forall t > 0,\; I_t = \Set{f < t} \setminus \Set{0, 1}
\]
and 
\[
    I_0 = \interior( \Set{f = 0} )
\]
where $\interior(A)$ denotes the interior of the set $A$. 

Conversely if $(I_t)_{t \ge 0}$ is a nested interval-partition
we can define a comb function $f_I \colon [0, 1] \to \R_+$ as
\[
    f_I(x) = \inf \Set{t \ge 0 \suchthat x \in I_t}.
\]
In general $f_I$ does not fulfill that $\Set{f_I \ge t}$
is finite. A necessary and sufficient condition for this
to hold is that for any $t > 0$, $I_t$ has finitely 
many interval components, and the summation of their
lengths is $1$. If the latter condition is fulfilled, we
say that $I_t$ is proper or equivalently that it
has no dust.

\medskip

A nested interval-partition naturally encodes a (pseudo-)ultrametric
$d_I$ on $[0, 1]$ defined as 
\begin{align*}
    d_I(x, y) &= \inf \Set{t \ge 0 \suchthat
            \text{$x$ and $y$ belong to the same interval of $I_t$}}\\
                                        &= \sup_{[x, y]} f_I
\end{align*}
for $x < y$. We call the ultrametric space $([0, 1], d_I)$ the \emph{comb
metric space} associated to $(I_t)_{t \ge 0}$. 
In order to turn $([0, 1], d_I)$ into a UMS, we need to define an
appropriate $\sigma$-field and a sampling measure. The interval $[0, 1]$
is naturally endowed with the usual Borel $\sigma$-field $\Bi([0, 1])$
and the Lebesgue measure. However, the usual Borel $\sigma$-field does
not fulfill the requirements of \Cref{def:UMS} in general because two
points that belong to the same interval component of $I_0$ are
indistinguishable in the metric $d_I$. This can be addressed by
considering a slightly smaller $\sigma$-field as follows.

Let $(I^0_i)_{i \ge 1}$
be the interval components of $I_0$. We define a $\sigma$-field
$\Is$ on $[0, 1]$ as
\[
    \Is = \Big\{A \cup \bigcup_{i \in M} I^0_i : A \in \Bi([0, 1]
        \setminus I_0) \text{ and } M \subseteq \N\Big\}
\]
where $\Bi([0, 1] \setminus I_0)$ denotes the usual Borel $\sigma$-field
on $[0, 1] \setminus I_0$. It is clear that $\Is \subseteq \Bi([0, 1])$. 
We call \emph{comb metric measure space} associated to $(I_t)_{t
\ge 0}$ the quadruple $([0, 1], d_I, \Is, \Leb)$, where $\Leb$ is the
restriction of the Lebesgue measure to $\Is$. The following lemma shows
that the Lebesgue measure on $\Is$ satisfies the requirements of
\Cref{def:UMS}, and that a comb metric measure space is a UMS.

\begin{lemma} \label{lem:combUMS}
    Any comb metric measure space $([0, 1], d_I, \Is, \Leb)$ is a UMS.
\end{lemma}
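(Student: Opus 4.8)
The plan is to verify the three defining properties of a UMS from \Cref{def:UMS} in turn; two of them are bookkeeping, and the third contains the only real difficulty. First, that $\Is$ is a $\sigma$-field contained in $\Bi([0,1])$ is immediate from its description: it contains $[0,1]$ (take $A=[0,1]\setminus I_0$ and $M=\N$); the complement of $A\cup\bigcup_{i\in M}I^0_i$ equals $\big(([0,1]\setminus I_0)\setminus A\big)\cup\bigcup_{i\notin M}I^0_i$, again of the prescribed shape; a countable union is obtained by unioning the Borel parts and the index sets separately; and every generator is Borel because the $I^0_i$ are open intervals and each $A$ is Borel in the closed set $[0,1]\setminus I_0$. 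Property (ii) follows at once, $\Leb$ restricted to the sub-$\sigma$-field $\Is$ being a measure of total mass $\Leb([0,1])=1$. Finally, the ultrametric (strictly, pseudo-ultrametric) inequality for $d_I$ is the same computation as for ordinary combs, from $\sup_{[x,z]}f_I=\max\big(\sup_{[x,y]}f_I,\sup_{[y,z]}f_I\big)$ when $x<y<z$; the $\sigma$-field $\Is$ is tailored precisely so that the points of a single $I^0_i$, which are pairwise at $d_I$-distance $0$, are not separated, consistently with the role of $\Ui$ in \Cref{def:UMS}.

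The one structural fact that carries the rest of the argument, and which I would isolate first, is that \emph{for every $s>0$ each interval component of $I_s$ belongs to $\Is$}. Indeed, by nesting $I_0\subseteq I_s$, so each component $I^0_i$ of $I_0$ is a connected subset of $I_s$ and therefore lies entirely inside a single component of $I_s$; hence, if $(a,b)$ is a component of $I_s$, then $(a,b)\cap I_0=\bigcup\{I^0_i:I^0_i\subseteq(a,b)\}$, so that $(a,b)=\big((a,b)\setminus I_0\big)\cup\bigcup\{I^0_i:I^0_i\subseteq(a,b)\}\in\Is$.

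Granting this, property (iii) goes as follows. Fix $x\in[0,1]$ and $t>0$. Using $d_I(x,y)=\sup_{[x\wedge y,\,x\vee y]}f_I$ and the elementary fact that $s>f_I(z)$ forces $z\in I_s$, one checks that $d_I(x,y)<t$ holds iff $[x\wedge y,\,x\vee y]\subseteq I_s$ for some (equivalently, some rational) $s\in(0,t)$ — the exclusion of $s=0$ being harmless since $I_0\subseteq I_s$ for $s>0$ — i.e.\ iff $y$ belongs to the component $C_s(x)$ of $I_s$ containing $x$. Therefore $\{y:d_I(x,y)<t\}=\{x\}\cup\bigcup_{s\in\mathbb{Q}\cap(0,t)}C_s(x)$, which lies in $\Is$: each $C_s(x)$ does by the structural fact, and $\{x\}\in\Is$ when $x\notin I_0$, while if $x\in I_0$ the singleton is already absorbed into the union. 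For property (i) it remains to show that $d_I$ is $\Is\otimes\Is$-measurable, and since the half-lines $[0,t)$ generate the Borel $\sigma$-field on the range of $d_I$, it suffices to check $\{(x,y):d_I(x,y)<t\}\in\Is\otimes\Is$ for every $t\ge 0$. This set is empty for $t=0$, and for $t>0$ the same analysis yields
\[
\{(x,y):d_I(x,y)<t\}=\Delta_0\ \cup\ \bigcup_{s\in\mathbb{Q}\cap(0,t)}\ \bigcup_{(a,b)\text{ component of }I_s}(a,b)\times(a,b),
\]
where $\Delta_0=\{(x,x):x\notin I_0\}$ is the diagonal of the closed set $[0,1]\setminus I_0$: off the diagonal this is the equivalence above, the diagonal points with $x\in I_0$ are absorbed into the double union (as $x\in I_0\subseteq I_s$), and those with $x\notin I_0$ form exactly $\Delta_0$. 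Each square $(a,b)\times(a,b)$ with $s>0$ lies in $\Is\otimes\Is$ by the structural fact, and $\Delta_0\in\Is\otimes\Is$ because $[0,1]\setminus I_0$ is separable metric, so $\Bi([0,1]\setminus I_0)^{\otimes 2}=\Bi\big(([0,1]\setminus I_0)^2\big)\subseteq\Is\otimes\Is$ contains the closed diagonal. This yields the measurability of $d_I$ and completes the verification.

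I expect the main obstacle to be precisely this $\Is\otimes\Is$-measurability: the diagonal of $[0,1]$ is in general \emph{not} an element of $\Is\otimes\Is$ — exactly because $\Is$ does not separate the points of any $I^0_i$ — so continuity of $d_I$ cannot simply be invoked. The resolution is to exploit that $d_I$ vanishes on the diagonal and that the troublesome diagonal points, those inside $I_0$, are automatically captured by the positive-time sublevel sets, leaving only the piece $\Delta_0$ of the diagonal sitting in the ``separable'' part $[0,1]\setminus I_0$, where product-of-Borel coincides with Borel-of-product. The lever that makes both (i) and (iii) work is the nesting-driven claim that each component of $I_s$, $s>0$, is $\Is$-measurable.
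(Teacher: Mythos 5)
Most of your argument tracks the paper's own proof closely: the identity $\Set{y : d_I(x,y)<t}=\bigcup_{s<t}I_s(x)$ for the ball condition, and the decomposition of $\Set{(x,y): d_I(x,y)<t}$ into the diagonal $\Delta_0$ of $([0,1]\setminus I_0)^2$ plus a countable union of squares of interval components, are exactly the two steps the paper uses. Your isolated ``structural fact'' (each component of $I_s$, $s>0$, lies in $\Is$ because the components of $I_0$ it contains can be collected into the index set $M$) and your separability justification for $\Delta_0\in\Is\otimes\Is$ spell out two points the paper leaves implicit; that part is fine.

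There is, however, a genuine gap in your treatment of point (iii). \Cref{def:UMS} requires $\Ui\subseteq\Bi(U)$ where $\Bi(U)$ is the Borel $\sigma$-field of the \emph{metric space} $(U,d)$ --- here the $\sigma$-field $\Bi_I([0,1])$ generated by the $d_I$-open sets --- not the Euclidean Borel $\sigma$-field of $[0,1]$. Your verification (``the $I^0_i$ are open intervals and each $A$ is Borel in the closed set $[0,1]\setminus I_0$'') establishes $\Is\subseteq\Bi([0,1])$ in the Euclidean sense, which is the easy remark already made when $\Is$ is introduced, but it says nothing about $\Bi_I([0,1])$. The two $\sigma$-fields are genuinely different and neither containment is automatic: for instance if $I_t=\emptyset$ for $t<1$ and $I_t=\OpenInterval{0,1}$ for $t\ge 1$, then $d_I$ is (essentially) the discrete metric and $\Bi_I([0,1])$ is the full power set; in general $d_I$-open sets need not be Euclidean-open and vice versa. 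The inclusion $\Is\subseteq\Bi_I([0,1])$ is precisely the half of the proof that requires work, and the paper devotes a specific argument to it: every interval $\OpenInterval{x,y}$ with $x,y\notin I_0$ is written as a union of $d_I$-balls $B_{d_I}(z,t_z)$ over $z\in\OpenInterval{x,y}$ --- a singleton ball when $z\notin I_{t_z}$ for some $t_z>0$, and a small interval component $I_{t_z}(z)\subseteq\OpenInterval{x,y}$ when $z\in I_t$ for all $t>0$ --- hence is $d_I$-open, and these intervals together with the components of $I_0$ generate $\Is$. Without an argument of this kind your proof does not verify the containment that the definition actually demands.
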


\begin{proof}
    Let us first prove that (iii) holds. For $x \in [0, 1]$ and $t
    \ge 0$, let $I_t(x)$ denote the interval component of $I_t$ to
    which $x$ belongs if $x \in I_t$, or let $I_t(x) = \Set{x}$ else.
    Then for $t > 0$ we have
    \[
        \Set{y \in [0, 1] \suchthat d_I(x, y) < t} 
        = \bigcup_{s < t} I_s(x) \in \Is.
    \]
    It remains to show that $\Is \subseteq \Bi_I([0, 1])$, where
    $\Bi_I([0, 1])$ denotes the $\sigma$-field induced by $d_I$. It is 
    sufficient to prove that for all $x, y \not\in I_0$, we have 
    $\OpenInterval{x, y} \in \Bi_I([0, 1])$. Let $z \in \OpenInterval{x, y}$
    and suppose that $z \in I_t$ for all $t > 0$. Then $I_{t_z}(z) \subseteq
    \OpenInterval{x, y}$ for a small enough $t_z$, and thus 
    \[
        \Set{z' \in [0, 1] \suchthat d_I(z, z') < t_z} \subseteq \OpenInterval{x, y}.
    \]
    Otherwise if $z \not\in I_{t_z}(z)$ for some $t_z$, then 
    $\Set{z' \in [0, 1] \suchthat d_I(z, z') < t_z} = 
    \Set{z}$. We can now write
    \[
        \OpenInterval{x, y} = \bigcup_{z \in \OpenInterval{x,y}} 
        \Set{z' \in [0, 1] \suchthat d_I(z, z') < t_z} \in \Bi_I([0, 1])
    \]
    which proves that point (iii) of the definition is fulfilled.

    Let $I_{t-} = \bigcup_{s < t} I_s$, then 
    \[
        \Set{(x,y) \in [0, 1]^2 \suchthat d(x,y) < t} = \Delta_0 \cup
        \bigcup_{x \in I_{t-}} I_t(x) \times I_t(x),
    \]
    where $\Delta_0 = \Set{(x, y) \in ([0, 1] \setminus I_0)^2 \suchthat x = y}$.
    As there are only countably many interval components of $I_t$, the
    union on the right-hand side is countable, and this set belongs to
    the product $\Is \otimes \Is$. This proves that point (i)
    holds and that the comb metric measure space is a UMS.
\end{proof}

For later purpose, let us denote by $U_I$ the completion of the quotient
space of $\Set{f_I = 0}$ by the relation $x \sim y$ \tiff $d_I(x,y) =
0$. (This completion can be realized explicitly by adding countably
many ``left'' and ``right'' faces to the comb, see
\Cref{SS:combCompletion}.)

\medskip

Finally, as in the compact case, an exchangeable coalescent 
$(\Pi_t)_{t \ge 0}$ can be obtained from a nested interval-partition
$(I_t)_{t \ge 0}$ out of an i.i.d.\ uniform sequence $(X_i)_{i \ge 1}$ by
defining
\begin{align} \label{eq:paintbox}
    i \sim_{\Pi_t} j \iff 
        \text{$X_i$ and $X_j$ belong to the same interval component of $I_t$.}
\end{align}
Notice that this definition is a multidimensional extension of the 
original Kingman paintbox procedure, see e.g.\ the beginning of Section~2.3.2
of~\cite{bertoin_2006}.

\begin{remark} \label{rem:cadlag}
    The coalescent obtained through this sampling procedure is not
    c\`adl\`ag in general.  As a coalescent is a non-decreasing process,
    we can (and will) always suppose that we work with a c\`adl\`ag
    modification of the coalescent.
\end{remark}

\begin{remark} \label{rem:samplingComparison}
    We have defined two natural ways of sampling a coalescent from
    a nested interval-partition. First, one can realize the
    extended paintbox procedure described in equation~\eqref{eq:paintbox}.
    Second, one can consider the comb metric measure space associated
    to the nested interval-partition and sample the coalescent according to
    equation~\eqref{eq:UMSsampling}. It is not hard to see that the
    coalescent obtained through~\eqref{eq:UMSsampling} is the c\`adl\`ag
    version of the one obtained through~\eqref{eq:paintbox}.
\end{remark}

We will now demonstrate that nested interval-partitions form a large
enough framework to answer our two initial problems: representing any
exchangeable coalescent as a paintbox on a comb and representing general
ultrametric measure spaces.

    \subsection{Comb representation of exchangeable coalescents}
    \label{SS:introCoalescent}

\paragraph{General comb representation} 
We start by showing that one can always find a comb representation of any
coalescent. First notice that this representation cannot be unique.  For
example taking the reflection of a comb about the vertical line in the
middle of the segment $[0, 1]$ yields a new comb but does not change the
associated coalescent.  In many applications we will not be interested in
this order but only in the genealogical structure of the comb. For this
reason we introduce the following relation.

\begin{definition} \label{def:paintboxEquivalent}
    Two generalized combs are paintbox-equivalent if 
    their associated coalescents
    are identical in law.
    Being paintbox-equivalent is an equivalence relation,
    we denote by $\mathfrak{I}$ the quotient space. 
\end{definition}

Given $I \in \mathfrak{I}$ we denote by $\rho_I$ the distribution on the space
of coalescents of the paintbox based on any representative of $\If$. We
provide the following version of Kingman's representation theorem (e.g.\
see~\cite{bertoin_2006} Theorem~2.1) for exchangeable coalescents.

\begin{theorem} \label{TH:coalescentGeneral}
    Let $(\Pi_t)_{t \ge 0}$ be an exchangeable coalescent. There exists 
    a unique distribution $\nu$ on $\If$ such that 
    \[
        \Prob*{\big}{(\Pi_t)_{t \ge 0} \in \cdot} 
        = \int_{\If} \rho_I(\cdot) \nu(\mathrm{d}I).
    \]
\end{theorem}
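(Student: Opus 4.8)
The plan is to establish existence and uniqueness separately, using a de Finetti / Kingman-type extraction of the directing random object from the exchangeable coalescent itself. For \textbf{existence}, I would first fix the i.i.d.\ uniform sample $(X_i)_{i\ge 1}$ that implicitly underlies the coalescent: by~\eqref{eq:UMSsampling}--\eqref{eq:paintbox} and the discussion preceding them, the law of an exchangeable coalescent is unchanged if we relabel the leaves, so one may as well realise $(\Pi_t)_{t\ge 0}$ on a probability space carrying such a sequence $(X_i)$ and where $i\sim_{\Pi_t}j$ encodes a consistent family of exchangeable compositions of $\N$ (consistent because $s\le t$ implies $\Pi_t$ coarser than $\Pi_s$). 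The key input is the representation theorem for exchangeable compositions of $\N$ by interval-partitions, \citep{gnedin_representation_1997}: for each fixed rational $t$, $\Pi_t$ (suitably ordered via the $X_i$) is a paintbox on a random open set $I_t\subseteq(0,1)$. The heart of the argument is to show these $I_t$ can be chosen \emph{jointly}, \emph{monotone}, and \emph{càdlàg}. I would build $(I_t)$ first on rationals by using the asymptotic frequencies: for a block $B$ of $\Pi_t$, its asymptotic frequency $|B|=\lim_n \frac1n\#(B\cap[n])$ exists a.s.\ by exchangeability, and the ordered arrangement of these frequencies (with the dust sprinkled uniformly, following Gnedin) produces $I_t$; monotonicity $I_s\subseteq I_t$ for $s\le t$ follows because coarsening of the partition corresponds to merging of interval components. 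Then extend to all $t$ by right-continuity, $I_t:=\bigcup_{s>t,\,s\in\Q}I_s$ on a null-set-free event, and check càdlàg and that the paintbox on $(I_t)$ recovers $(\Pi_t)$ in law — this last point is where one must be careful that the extended-paintbox~\eqref{eq:paintbox} applied to $(I_t)$ with a \emph{fresh} uniform sample has the same law as the original, which follows because conditionally on $(I_t)$ the original coalescent is itself such a paintbox (the $(X_i)$ are i.i.d.\ uniform and independent of the limiting frequencies, by the standard exchangeability argument). Taking $\nu$ to be the law of the equivalence class $[(I_t)]\in\If$ gives the claimed mixture.

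For \textbf{uniqueness}, suppose $\nu_1,\nu_2$ both represent $(\Pi_t)_{t\ge0}$. It suffices to show that the map $I\mapsto\rho_I$ is injective on $\If$, i.e.\ that the law of the coalescent determines the equivalence class of the directing comb; then $\nu_1$ and $\nu_2$ are both the pushforward of $\Prob{(\Pi_t)\in\cdot}$ under this inverse, hence equal. Injectivity is exactly the statement that two generalized combs with the same coalescent law are paintbox-equivalent, which is the \emph{definition} of $\If$ — so strictly the content is that $\rho$ is well-defined and injective on the quotient, which is immediate from Definition~\ref{def:paintboxEquivalent}. The real work is measurability: one must check $I\mapsto\rho_I(\cdot)$ is a measurable kernel from $\If$ (with the $\sigma$-field inherited from the Skorokhod-type topology on càdlàg interval-partition-valued paths modulo paintbox-equivalence) to the space of coalescents, so that the integral in the statement makes sense and the pushforward argument is legitimate. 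This follows by a monotone-class argument: for finite $n$ and rational times $t_1<\dots<t_k$, the finite-dimensional law of $(\Pi_{t_j}\cap[n])_{j\le k}$ is an explicit (piecewise-polynomial in the interval lengths) function of $(I_{t_j})$, hence measurable, and these determine $\rho_I$.

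The \textbf{main obstacle} I anticipate is the joint construction of the monotone càdlàg comb $(I_t)_{t\ge0}$ — reconciling Gnedin's (marginal, one-$t$-at-a-time) representation with the nesting constraint and producing a genuine càdlàg path, including correctly handling the dust. Two subtleties deserve care: (1) the ordering ambiguity — Gnedin's interval-partition for a single $\Pi_t$ is only determined up to the paintbox equivalence, so one must fix a \emph{canonical} version (e.g.\ order interval components by decreasing length, breaking ties with an auxiliary independent uniform randomisation) and verify that this canonical choice is \emph{consistent in $t$} so that nesting literally holds, not just up to equivalence; (2) at times $t$ where $\Pi_t$ has dust, the asymptotic frequencies do not sum to one, and the "missing mass" must be re-inserted as singletons distributed according to an independent uniform — one must check this can be done coherently across all rational $t$ and that the resulting $(I_t)$ is still càdlàg in $d_H$. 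Once the path is built on rationals, the extension by right-continuity and the identification of the paintbox law are routine, and the remark~\ref{rem:cadlag} on working with càdlàg modifications covers the fact that~\eqref{eq:paintbox} is only càdlàg after modification.
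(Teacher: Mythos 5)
Your overall strategy (Gnedin's marginal representation at each rational time, then a joint monotone c\`adl\`ag comb, then right-continuous extension) is the same as the paper's, and you correctly locate the crux: producing an ordering of the blocks that is consistent across all times. But your proposed resolution of that crux does not work, and this is a genuine gap. You suggest fixing a \emph{canonical} order of the interval components at each time $t$ (by decreasing length, with randomized tie-breaking) and then ``verifying'' consistency in $t$. That verification fails: the order of the blocks carries information beyond their masses (this is exactly what Figure~\ref{F:counter} illustrates), so no ordering that is a function of the single-time partition can be guaranteed compatible with future mergers. Concretely, take blocks $A,B,C$ of asymptotic frequencies $1/2,\,1/3,\,1/6$; decreasing length forces the order $A,B,C$, but if the coalescent merges $A$ with $C$ while leaving $B$ alone, the corresponding intervals are not adjacent and cannot merge into a single interval component, so nesting is violated. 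The order must be built \emph{jointly across all times}, respecting the genealogy, and it must be \emph{randomized uniformly among the genealogy-consistent orders} to preserve exchangeability. This is the content of the paper's Lemma~\ref{lem:uniformComposition}: one constructs the order inductively over $n$ (inserting the block of $n+1$ uniformly among the finitely many positions compatible with all its future coalescence events) and then proves exchangeability of the resulting nested composition by a counting identity on the number of consistent orderings. That lemma, combined with Theorem~\ref{TH:gnedinNested} (the nested version of Gnedin's theorem, with its own uniform convergence of empirical interval-partitions), is the missing ingredient in your argument. Relatedly, your opening move --- ``fix the i.i.d.\ uniform sample $(X_i)$ that implicitly underlies the coalescent'' --- is circular at that stage: the existence of such a sample on which $(\Pi_t)$ is a paintbox is precisely what the theorem asserts.

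A smaller point on uniqueness: injectivity of $I\mapsto\rho_I$ on $\If$ (which, as you note, holds by definition of the quotient) does not by itself imply identifiability of the mixture $\int\rho_I\,\nu(\mathrm{d}I)$; one needs that the class of $I$ in $\If$ is a.s.\ recoverable from the coalescent itself, which here follows from the convergence of the empirical interval-partitions (Proposition~\ref{PR:empirical} and the uniqueness clause of Theorem~\ref{TH:gnedinNested}). The paper is also terse on this, but your stated reason is not the right one.
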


\begin{remark} 
    It is interesting to relate this result to the original theorem from
    Kingman. A \emph{mass-partition} is a sequence 
    $\beta = (\beta_i)_{i \ge 1}$ such that
    \[
        \beta_1 \ge \beta_2 \ge \dots \ge 0, \quad \sum_{i \ge 1} \beta_i \le 1.
    \]
    Kingman's representation theorem states that any exchangeable 
    partition can be obtained through a paintbox based on a random
    mass-partition, and that this correspondence is bijective. 
    A mass-partition can be seen as the ranked sequence of the lengths
    of the interval components of an interval-partition. Now notice 
    that two interval-partitions are paintbox-equivalent, i.e.\ induce the 
    same exchangeable partition, \tiff they
    have the same associated mass-partition. In this one-dimensional
    setting, any paintbox-equivalence class of interval-partitions can
    be identified with a random mass-partition. In a similar way, it would be
    natural to try to identify the elements of $\If$ 
    with mass-partition valued processes, also called \emph{mass-coalescents}.
    However, one can easily find two different equivalence classes
    of $\If$ that have the same associated mass-coalescent, see \Cref{F:counter}. 
\end{remark}

\begin{remark}
    A result very similar to \Cref{TH:coalescentGeneral} has been
    obtained in~\cite{Forman2017}, Theorem~4, in the context of
    hierarchies. Roughly speaking, an exchangeable hierarchy is obtained
    from an exchangeable coalescent by ``forgetting about time''. In this
    sense, an exchangeable coalescent carries more information, and this
    part of our work can be seen as an extension of~\cite{Forman2017}.
    However, the forthcoming \Cref{S:lambda} and \Cref{S:ultrametric}
    heavily rely on the knowledge of the coalescence times, and could not
    have been achieved in the framework of hierarchies. We have dedicated
    \Cref{app:hierarchies} to the explanation of the links between the
    present work and~\cite{Forman2017}.
\end{remark}

\begin{figure}
    \center
    \includegraphics[width=\textwidth]{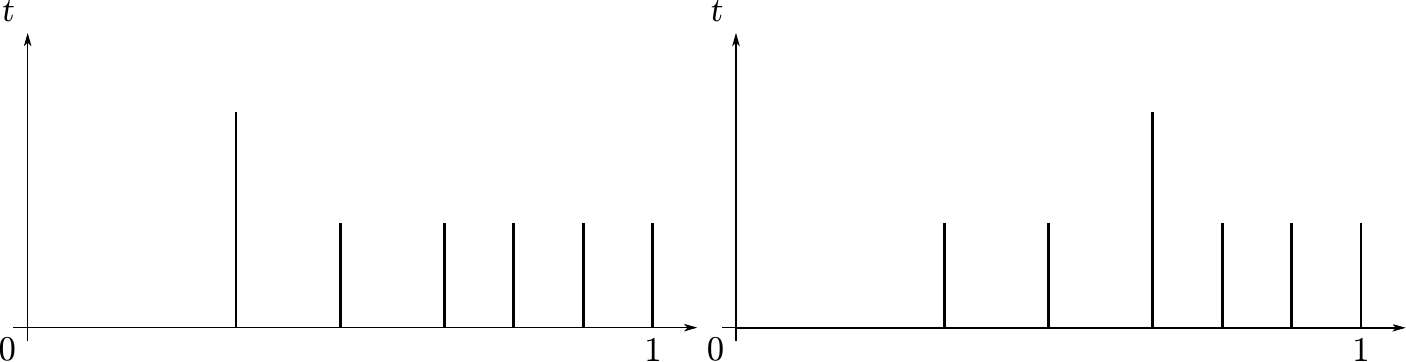}
    \caption{An example of two nested interval-partitions that have the same
    mass-coalescent but different coalescents. For both processes,
    the initial mass-partition is $(\frac{1}{3}, \frac{1}{6},
    \frac{1}{6}, \frac{1}{9}, \frac{1}{9}, \frac{1}{9}, 0, \dots)$,
    then $(\frac{2}{3}, \frac{1}{3}, 0, \dots)$ and finally $(1, 0, \dots)$. However, for
    the process on the left-hand side the first blocks to merge are those of mass $1/6$ and 
    $1/9$, whereas for the right-hand process, the
    blocks of mass $1/6$ first merge with the block of size $1/3$.}
    \label{F:counter}
\end{figure}

\paragraph{$\Lambda$-coalescents}
Most of the efforts made in the study of exchangeable coalescents have
been devoted to the special case of $\Lambda$-coalescents
\citep{pitman_coalescents_1999, sagitov1999}.
These coalescents are parametrized by a finite measure 
$\Lambda$ on $[0, 1]$, and their restriction to $[n] \defas \Set{1, \dots, n}$
is a Markov chain whose transitions are the following.
The process undergoes a transition from a partition $\pi$ with
$b$ blocks to a partition obtained by merging $k$ blocks
of $\pi$ at rate $\lambda_{b, k}$ given by
\[
    \lambda_{b, k} = \int_{[0,1]} x^{k-2} (1-x)^{b-k} \Lambda(\mathrm{d}x).
\]
The next proposition states that we can always find a Markovian
comb representation of a $\Lambda$-coalescent. Moreover
in \Cref{S:lambda} we provide an explicit description of its transition.

\begin{proposition} \label{TH:coalescentLambda}
    Let $(\Pi_t)_{t \ge 0}$ be a $\Lambda$-coalescent. There exists
    $(I_t)_{t \ge 0}$ a Markov nested interval-partition such that
    the coalescent obtained from the paintbox based on $(I_t)_{t \ge 0}$
    is distributed as $(\Pi_t)_{t \ge 0}$. 
\end{proposition}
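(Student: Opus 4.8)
The plan is to construct the Markov nested interval-partition directly from the flow of bridges associated to the $\Lambda$-coalescent, as alluded to in the abstract. Recall from \cite{bertoin_stochastic_2003} that a $\Lambda$-coalescent can be encoded by a flow of bridges $(B_{s,t})_{s \le t}$: for each $s \le t$, $B_{s,t} \colon [0,1] \to [0,1]$ is an exchangeable random non-decreasing c\`adl\`ag function with $B_{s,t}(0) = 0$, $B_{s,t}(1) = 1$, satisfying the cocycle property $B_{s,t} = B_{u,t} \circ B_{s,u}$ for $s \le u \le t$, with $B_{s,s} = \Id$ and stationary independent increments in an appropriate sense. The coalescent at time $t$ is recovered from $B_{0,t}$ by throwing i.i.d.\ uniform variables $(X_i)_{i \ge 1}$ and declaring $i \sim_{\Pi_t} j$ \tiff $X_i$ and $X_j$ are sent to the same point by $B_{0,t}$, i.e.\ they fall in the same ``level set'' of $B_{0,t}$. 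The jumps of $B_{0,t}$ correspond exactly to the blocks of the coalescent, and the flatness (constancy intervals) encode which $X_i$'s have merged.

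The key observation is that the right-continuous inverse turns this picture into a nested interval-partition. For each $t \ge 0$, define $I_t$ to be the interior of the complement of the closed range of $B_{0,t}$ — equivalently, $I_t = \bigcup \OpenInterval{B_{0,t}(x-), B_{0,t}(x)}$ where the union is over the (at most countably many) jump locations $x$ of $B_{0,t}$. First I would check that $(I_t)_{t \ge 0}$ is indeed a nested interval-partition: monotonicity $I_s \subseteq I_t$ for $s \le t$ follows from the cocycle property (composing with a further bridge $B_{s,t}$ can only create new jumps or enlarge existing ones in $B_{0,t} = B_{s,t}\circ B_{0,s}$, hence only enlarges the complement of the range), and the c\`adl\`ag property in $t$ (valued in open sets with the Hausdorff-complement metric $d_H$) follows from the c\`adl\`ag regularity of the flow of bridges in the same spirit as Lemma~\ref{lem:combUMS} handles measurability. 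Next I would verify that the paintbox based on $(I_t)_{t \ge 0}$ via \eqref{eq:paintbox} reproduces $(\Pi_t)_{t \ge 0}$ in law: by the construction of $I_t$, two uniform points $X_i, X_j$ lie in the same interval component of $I_t$ precisely when they lie strictly between $B_{0,t}(x-)$ and $B_{0,t}(x)$ for a common jump location $x$, which — after pushing the $X_i$ back through the right-inverse of $B_{0,t}$, and using that $B_{0,t}$ pushes Lebesgue measure to itself — is exactly the event that the preimages merge in the coalescent at time $t$; exchangeability of the bridge guarantees the joint law matches. Finally, the Markov property of $(I_t)_{t \ge 0}$ is inherited from the Markov property (stationary independent increments) of the flow: $I_t$ is a deterministic measurable functional of $B_{0,t}$, and $B_{0,t} = B_{s,t} \circ B_{0,s}$ with $B_{s,t}$ independent of $\cF_s = \sigma(B_{0,u}, u \le s)$, so $I_t$ given $\cF_s$ depends only on $B_{0,s}$, hence only on $I_s$ together with the residual flatness data — here one must be slightly careful, and I would argue that $I_s$ together with the partition it induces actually determines $B_{0,s}$ up to the relevant equivalence, or alternatively enrich the state space minimally; this is the only delicate bookkeeping point.

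The main obstacle I anticipate is precisely this last subtlety: $I_t$ as an open set records the sizes and positions of the coalescent blocks but a priori ``forgets'' the bridge $B_{0,t}$ inside its flat stretches (the bridge is constant there, but the starting heights of those plateaus, i.e.\ which original dust the plateau sits above, carry extra information). Two resolutions are available. Either one observes that for the $\Lambda$-coalescent the bridge $B_{0,t}$ is recoverable from $I_t$ alone, since $B_{0,t}$ is the nondecreasing function whose jumps have sizes equal to the lengths of the complementary intervals of $[0,1]\setminus I_t$, arranged left to right in their natural order and with the appropriate left-continuous/right-continuous normalization — so $I_t \leftrightarrow B_{0,t}$ is in fact a bijection and no information is lost. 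Or, if one prefers not to rely on that, one shows the weaker Markov property required: that the conditional law of $(I_{s+u})_{u\ge0}$ given $(I_r)_{r\le s}$ depends only on $I_s$, which holds because the only randomness is in $B_{s,t}$ (independent of the past) and in the bridge's action on the plateaus of $B_{0,s}$, and the latter action is itself, by exchangeability, a function of the plateau lengths, i.e.\ of $I_s$. I would present the bijection route as the clean argument. Everything else — c\`adl\`ag regularity, the identity of the paintbox laws, the explicit transition rates (deferred to \Cref{S:lambda}) — is routine given the flow-of-bridges machinery.
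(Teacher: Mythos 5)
Your route through the flow of bridges is viable in principle --- it is essentially the paper's \Cref{cor:combBridge} run in reverse, whereas the paper itself first builds the $\Lambda$-comb intrinsically from a composition-valued Markov chain with adjacent-merger rates $\tl_{b,k}$ and only afterwards identifies it with $(I(B_{0,t}))_{t\ge0}$ --- but as written the argument has two concrete problems. The first is the composition order. With the cocycle convention $B_{s,t}=B_{s,r}\circ B_{r,t}$ one has $B_{0,t}=B_{0,s}\circ B_{s,t}$: the new increment acts on the \emph{inside}, so $B_{0,t}([0,1])=B_{0,s}\big(B_{s,t}([0,1])\big)\subseteq B_{0,s}([0,1])$, the range shrinks, and $I_s\subseteq I_t$ follows. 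You compose the other way, $B_{0,t}=B_{s,t}\circ B_{0,s}$, and then the range of the composition is $B_{s,t}$ applied to the old range; a plateau of $B_{s,t}$ can collapse an old gap and the new complement need not contain the old one, so your monotonicity argument fails as stated (one can write down a two-jump example where a point of $I_s$ lands back in the range of $B_{0,t}$).

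The more serious gap is the ``bijection route'' that you designate as the clean argument. The map $B\mapsto I(B)$ is injective only when the bridge has a nontrivial drift part, i.e.\ when $\sum_i\beta_i<1$. In the dust-free case --- which includes the Kingman coalescent and every $\Lambda$-coalescent coming down from infinity --- $I(B)$ records the gap lengths in the order induced by the jump locations $V_i$ but not the $V_i$ themselves: for instance $B=\Indic{V_1\le\cdot}$ gives $I(B)=\OpenInterval{0,1}$ for every value of $V_1$. So $I_t$ does \emph{not} determine $B_{0,t}$, and the Markov property of $(I_t)_{t\ge0}$ cannot be obtained by a change of variables from the Markov property of the flow. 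What would save the argument is your second resolution, but that is exactly where the work lies: one must show that, conditionally on the whole past $(I_r)_{r\le s}$ and not merely on $I_s$, the jump locations of $B_{0,s}$ are still distributed as i.i.d.\ uniforms reordered to match the gaps of $I_s$ (the ``uniform order'' property), and that this conditional law intertwines correctly with composition by the independent increment $B_{s,t}$. This preservation of uniform order under the dynamics is the crux, and the paper devotes \Cref{lem:lambdaComposition} (a Rogers--Pitman intertwining between the partition-valued and composition-valued chains), \Cref{lem:uniformOrder} (uniform order $\Leftrightarrow$ bridge representation), and the empirical-approximation argument of \Cref{lem:empiricalLambdaTransition} and \Cref{SS:proofSemigroup} to it. Invoking ``exchangeability'' in one line does not discharge this step; without it, neither the Markov property nor the well-definedness of the transition kernel $I_s\mapsto I(B^{I_s}\circ B'_t)$ is established.
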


\begin{remark}[Combs and the flow of bridges]
    The flow of bridges introduced by~\cite{bertoin_stochastic_2003}
    represents the dynamics of a population whose genealogy is given by a
    $\Lambda$-coalescent. We will show that we can build a nested
    interval-partition from the flow of bridges and that it has the same
    distribution as the Markov nested interval-partition of
    \Cref{TH:coalescentLambda}, see \Cref{S:lambda}.
\end{remark}

\begin{remark}
    There exists a natural extension of the $\Lambda$-coalescents called
    the coalescents with simultaneous multiple collisions or
    $\Xi$-coalescents \citep{schweinsberg_coalescents_2000}. All our
    results carry over to $\Xi$-coalescents, however for the sake of
    clarity we will focus on the case of $\Lambda$-coalescents.
\end{remark}

A coalescent process models the genealogy of a population living at a
fixed observation time. Many works have been concerned with the dynamical
genealogy obtain by varying the observation time of the population. For
example, in~\cite{pfaffelhuber_process_2006, pfaffelhuber_tree_2011} the
authors study some statistics of the dynamical genealogy, namely the time
to the MRCA and the total length of the genealogy.
In~\cite{greven_tree-valued_2008} the genealogy is encoded as a metric
space (a real tree, see~\cite{evans_probability_2007}) and the authors
introduce the tree-valued Fleming-Viot process, a process bearing the
entire information on the dynamical genealogy. This encoding requires to
work with metric space-valued stochastic processes, and with the rather
technical Gromov-weak topology for metric spaces. 

We address such questions in the framework of combs in
\Cref{SS:dynamicalComb}. We show that we can naturally encode a
dynamical genealogy as a comb-valued process called the \emph{evolving
comb}. This process is a Markov process, whose semi-group can be
explicitly described. In the particular case of coalescents that come
down from infinity, the semi-group of the evolving comb takes a
particularly simple form in terms of sampling from an independent comb.

    \subsection{Comb representation of ultrametric spaces}
    \label{SS:introUltrametric}

The second main aim of this paper is to provide a comb representation of
ultrametric measure spaces in the same vein as Theorem~3.1
of~\cite{lambert_comb_2017}.  We will only state our results informally
and refer to \Cref{S:ultrametric} for the precise statements. 

We first introduce the \emph{Gromov-weak topology} on the space of UMS and
show that any UMS is indistinguishable from a comb metric space in this
topology. To do so, we realize a straightforward extension of the work
developed in~\cite{greven_convergence_2006, gromov2007metric} which is
focused on separable metric measure spaces. In short, starting from a UMS a
we can obtain a coalescent by sampling from it as described in
\Cref{SS:intro}. We say that a sequence of UMS converges to a limiting
UMS in the Gromov-weak sense if the corresponding coalescents converge
weakly as partition-valued stochastic processes (see \Cref{SS:gromovWeak}
for a more precise definition). We are now ready to state our
representation result, which is a direct application of
\Cref{TH:coalescentGeneral}.

\begin{theorem} \label{TH:combRepresentation}
    For any UMS $(U, d, \Ui, \mu)$ there exists a comb metric measure
    space that is indistinguishable in the Gromov-weak topology from $(U,
    d, \Ui, \mu)$. 
\end{theorem}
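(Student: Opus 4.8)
The plan is to reduce \Cref{TH:combRepresentation} to \Cref{TH:coalescentGeneral} by passing through the sampled coalescent. First I would fix a UMS $(U, d, \Ui, \mu)$ and consider an i.i.d.\ sequence $(X_i)_{i \ge 1}$ with law $\mu$; the measurability condition (i) in \Cref{def:UMS} guarantees that $d(X_i, X_j)$ is a genuine random variable, and the condition on balls in (iii) ensures that the process $(\Pi_t)_{t \ge 0}$ defined by $i \sim_{\Pi_t} j \iff d(X_i, X_j) \le t$ is a well-defined partition-valued process. As noted in \Cref{SS:intro}, the nesting property of balls in an ultrametric space makes $(\Pi_t)_{t \ge 0}$ non-decreasing, and the i.i.d.\ nature of the sample makes it exchangeable; after passing to a c\`adl\`ag modification (\Cref{rem:cadlag}) this is an exchangeable coalescent in the sense of the paper. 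The whole point of the Gromov-weak topology as defined in \Cref{SS:gromovWeak} is that this coalescent (its law as a partition-valued process) is exactly the invariant attached to the UMS: two UMS are Gromov-weakly indistinguishable precisely when their sampled coalescents have the same law.

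Next I would apply \Cref{TH:coalescentGeneral} to this exchangeable coalescent $(\Pi_t)_{t \ge 0}$, obtaining a distribution $\nu$ on $\If$ such that $\Prob{(\Pi_t)_{t \ge 0} \in \cdot} = \int_{\If} \rho_I(\cdot)\, \nu(\mathrm{d}I)$. To produce an actual comb metric measure space (a single one, not a mixture), I would take a random nested interval-partition $(I_t)_{t \ge 0}$ whose law projects to $\nu$ on $\If$; by \Cref{lem:combUMS} the associated comb metric measure space $([0,1], d_I, \Is, \Leb)$ is a UMS for every realization. Then I need to check that the coalescent obtained by sampling from this (random) comb metric measure space according to \eqref{eq:UMSsampling} has the same law as $(\Pi_t)_{t \ge 0}$. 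By \Cref{rem:samplingComparison} this sampled coalescent is the c\`adl\`ag version of the one obtained by the extended paintbox procedure \eqref{eq:paintbox}, whose conditional law given $(I_t)$ is by definition $\rho_I$; integrating against $\nu$ recovers the law of $(\Pi_t)_{t \ge 0}$ by \Cref{TH:coalescentGeneral}. Hence the comb metric measure space built from $(I_t)_{t \ge 0}$ and the original UMS have the same sampled-coalescent law, i.e.\ they are indistinguishable in the Gromov-weak topology.

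A minor point I would want to be careful about is that \Cref{TH:combRepresentation} asks for a (deterministic) comb metric measure space, so I should either interpret the output as a \emph{random} comb metric measure space (which is what the mixture $\nu$ naturally gives, mirroring the way Kingman's theorem produces a random mass-partition), or, if a single deterministic space is wanted, observe that the Gromov-weak invariant is the law of the coalescent and that a mixture of combs still defines one such object up to the equivalence that the Gromov-weak topology does not see. I expect the main (and really the only) obstacle to be bookkeeping around \Cref{def:UMS}: verifying that the coalescent sampled from $(U, d, \Ui, \mu)$ is well-defined and exchangeable uses conditions (i) and (iii) in an essential way, and one must make sure the c\`adl\`ag modification does not change the finite-dimensional distributions, which is where \Cref{rem:cadlag} and the fact that $(\Pi_t)$ is monotone are invoked. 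Everything else is a direct concatenation of \Cref{lem:combUMS}, \Cref{rem:samplingComparison}, and \Cref{TH:coalescentGeneral}, so the proof should be short.
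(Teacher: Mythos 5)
Your proposal is correct and follows essentially the same route as the paper: sample the coalescent from the UMS, invoke \Cref{TH:coalescentGeneral} to obtain a (random) nested interval-partition realizing its law, and use \Cref{rem:samplingComparison} to identify the paintbox coalescent with the one sampled from the associated comb metric measure space. Your additional care about the comb being random rather than deterministic is a reasonable clarification but does not change the argument, which the paper carries out in the same way.
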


\begin{proof}
    As we have identified any UMS with the distribution of its
    coalescent, two UMS are indistinguishable \tiff their coalescents
    have the same distribution. \Cref{TH:coalescentGeneral} shows
    that we can always find a nested interval-partition $(I_t)_{t \ge
    0}$ such that the coalescent obtained from a paintbox based on
    $(I_t)_{t \ge 0}$ is distributed as the coalescent obtained by
    sampling from $(U, d, \Ui, \mu)$. As noticed in
    \Cref{rem:samplingComparison}, the coalescent obtained by
    sampling in the comb metric measure space $([0, 1], d_I, \Is, \Leb)$
    has the same distribution as the coalescent obtained from the
    paintbox based on $(I_t)_{t \ge 0}$, and thus this comb metric
    measure space is indistinguishable from $(U, d, \Ui, \mu)$.
\end{proof}

The comb representation given by \Cref{TH:combRepresentation} is rather
weak, since it only ensures that we can find a comb that has the same
sampling structure as a given UMS. We would like to be more precise and
obtain an isometry result as in the compact case.  This is not possible
in general, and we have to consider separately the separable case and the
non-separable case.

\paragraph{The separable case} 
In the separable case, the coalescent contains all the information about
the UMS. More precisely, the Gromov reconstruction theorem ensures that
two complete separable UMS that are indistinguishable in the Gromov-weak
topology have the supports of their measures in isometry, see e.g.\
\cite{gromov2007metric}, Section~3.$\frac{1}{2}$.5
or~\cite{greven_convergence_2006}, Proposition~2.6. 
The following refinement of \Cref{TH:combRepresentation} in the separable
case is a direct consequence of the Gromov reconstruction theorem and of 
\Cref{TH:combRepresentation}, see \Cref{SS:separable} for a proof.

\begin{corollary} \label{cor:isometrySeparableUMS}
    Let $(U, d, \Ui, \mu)$ be a complete separable UMS. There exists
    a comb metric measure space such that the support of $\mu$ is
    isometric to $(U_I, d_I)$, and that the isometry maps $\mu$ to
    $\Leb$.
\end{corollary}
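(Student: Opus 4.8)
The plan is to combine \Cref{TH:combRepresentation} with the Gromov reconstruction theorem for complete separable metric measure spaces. Starting from a complete separable UMS $(U, d, \Ui, \mu)$, \Cref{TH:combRepresentation} produces a comb metric measure space $([0, 1], d_I, \Is, \Leb)$ which is indistinguishable from $(U, d, \Ui, \mu)$ in the Gromov-weak topology, i.e., the coalescents obtained by sampling from each have the same distribution. The idea is that this indistinguishability, together with completeness and separability on both sides, forces an isometry between the relevant supports, and that this isometry must carry one sampling measure onto the other.

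The first issue to settle is that the comb metric measure space is \emph{not} itself separable (because of the dust, i.e., the interval components of $I_0$), so the Gromov reconstruction theorem cannot be applied to it directly. The natural fix is to pass to the space $U_I$ introduced just before \Cref{lem:combUMS} — the completion of the quotient of $\Set{f_I = 0}$ by the relation identifying points at $d_I$-distance zero. I would first check that $(U_I, d_I)$, equipped with the pushforward of $\Leb$ under the quotient map, is a complete separable UMS and that sampling from it yields the same coalescent as sampling from the comb metric measure space (this is essentially the content of \Cref{rem:samplingComparison} together with the observation that collapsing dust intervals to points does not change the paintbox). One also needs to restrict to the support of the pushforward measure, since the reconstruction theorem speaks about supports. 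After this reduction, both $\supp \mu \subseteq U$ and $\supp(\text{pushforward}) \subseteq U_I$ are complete separable metric measure spaces with measures of full support, and they are indistinguishable in the Gromov-weak sense, so the Gromov reconstruction theorem (e.g.\ \cite{gromov2007metric}, Section~3.$\frac{1}{2}$.5, or \cite{greven_convergence_2006}, Proposition~2.6) yields a measure-preserving isometry between them.

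The main obstacle I anticipate is the bookkeeping around the dust and the passage to supports: one must be careful that the comb metric measure space $([0,1], d_I, \Is, \Leb)$ of \Cref{TH:combRepresentation} can always be taken so that, after collapsing dust and restricting to the support, it is still a \emph{comb} metric measure space (of the same combinatorial type, so that the statement of the corollary refers to $(U_I, d_I)$ as defined). Concretely, the worry is whether removing $\Leb$-null interval components or points outside the support damages the nested-interval-partition structure; I would argue that one may modify the representative comb on a Lebesgue-null set without changing the coalescent law, so that $U_I$ coincides with the support of the pushforward measure, making the isometry statement clean. A secondary, more routine point is verifying that the isometry produced by the reconstruction theorem is genuinely measure-preserving — but this is built into the statement of the Gromov reconstruction theorem, so it requires only citation rather than new work.

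Finally, assembling the pieces: the composition of the quotient-and-completion map $[0,1] \to U_I$ with the reconstruction isometry $U_I \to \supp \mu$ gives the desired map, and tracking the measures along the way shows it sends $\Leb$ to $\mu$. I would present this as a short proof that quotes \Cref{TH:combRepresentation}, identifies $U_I$ with the support of the relevant pushforward measure, and then invokes the Gromov reconstruction theorem, deferring the dust bookkeeping to \Cref{SS:separable} where the comb-completion machinery of \Cref{SS:combCompletion} is available.
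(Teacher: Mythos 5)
Your overall architecture coincides with the paper's: apply \Cref{TH:combRepresentation}, pass to the completed quotient $(U_I, d_I)$ carrying the pushforward of $\Leb$, and invoke the Gromov reconstruction theorem for complete separable metric measure spaces. The genuine problem lies in the step you single out as the main obstacle. You propose to dispose of the dust by modifying the representative comb on a Lebesgue-null set. This cannot work as stated: dust is by definition a positive-Lebesgue-measure phenomenon ($I_t$ has dust precisely when the lengths of its interval components sum to less than $1$, so that $[0,1]\setminus I_t$ has positive measure beyond its countably many endpoints), and it corresponds to singleton blocks of the coalescent occurring with positive probability. No null-set modification can remove it; and if dust were present, the set $\Set{f_I = 0}$ would have $\Leb(\Set{f_I=0})<1$, so the pushforward on $U_I$ would not even be a probability measure and could not be matched to $(U,d,\mu)$. (Incidentally, the dust is not ``the interval components of $I_0$'' --- those are countably many atoms that collapse to points in the quotient; the source of non-separability is the uncountable complement of $I_t$, $t>0$, minus its endpoints.)

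The paper's resolution is the one missing idea: first replace $U$ by $\supp(\mu)$, which is harmless since the statement concerns $\supp(\mu)$ and, $U$ being separable, $\mu(\supp(\mu))=1$ so the coalescent is unchanged. Once $\supp(\mu)=U$, every ball $B(x,t)$ with $t>0$ has positive mass, so every block of the sampled coalescent has positive asymptotic frequency at every $t>0$; hence the coalescent has no dust, and therefore neither does the representing comb, giving $\Leb(\Set{f_I=0})=1$ for free. With that in hand your remaining steps go through: every point of the quotient of $\Set{f_I=0}$ has, for each $t>0$, a $d_I$-ball pulling back to an interval component of some $I_s$ with $s<t$, hence of positive Lebesgue measure, so the pushforward has full support on $U_I$ and the reconstruction theorem applies to $U_I$ itself rather than to a proper closed subset. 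Your secondary points --- that $U_I$ samples the same coalescent as the comb metric measure space, and that the reconstruction isometry is measure-preserving --- are handled exactly as you suggest.
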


Additionally, any separable ultrametric space $(U, d)$
can be endowed with a probability measure whose support is the whole
space $U$, see \Cref{lem:measure}.  This result combined with
\Cref{cor:isometrySeparableUMS} yields the following representation
result for complete separable ultrametric spaces, which is the direct extension of
Theorem~3.1 of \cite{lambert_comb_2017} to the separable case.

\begin{proposition} \label{prop:representationSeparable}
    Let $(U, d)$ be a complete separable ultrametric space. We can find a
    nested interval-partition such that $(U_I, d_I)$ is isometric to $(U,
    d)$. 
\end{proposition}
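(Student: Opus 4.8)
The plan is to combine \Cref{cor:isometrySeparableUMS} with the existence of a fully-supported sampling measure on any complete separable ultrametric space. First I would invoke \Cref{lem:measure}: starting from the complete separable ultrametric space $(U, d)$, I obtain a probability measure $\mu$ on the Borel $\sigma$-field $\Bi(U)$ whose support is all of $U$. Since $(U, d)$ is separable, the remark following \Cref{def:UMS} guarantees that $(U, d, \Bi(U), \mu)$ is a (Borel) UMS, so the hypotheses of \Cref{cor:isometrySeparableUMS} are met.

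Next I would apply \Cref{cor:isometrySeparableUMS} to this UMS. It provides a comb metric measure space $([0,1], d_I, \Is, \Leb)$, that is, a nested interval-partition $(I_t)_{t \ge 0}$, such that the support of $\mu$ is isometric to $(U_I, d_I)$, with the isometry carrying $\mu$ to $\Leb$. The final step is simply to observe that because the support of $\mu$ equals $U$ (by our choice of $\mu$), this isometry is in fact between $(U, d)$ itself and $(U_I, d_I)$, which is exactly the claimed conclusion.

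The only point requiring a little care — and the one I would expect to be the main (minor) obstacle — is the existence of a measure with full support, i.e.\ \Cref{lem:measure}; but this is a stated earlier result, so in the present proof it can be cited directly. One should also double-check that the notion of ``support of $\mu$'' used in \Cref{cor:isometrySeparableUMS} coincides with the usual topological support, so that ``support $= U$'' is the right way to cash in full support; given the separability and completeness of $U$, this is immediate. Everything else is a direct chaining of the quoted statements, so the proof is short.
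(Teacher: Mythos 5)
Your proposal is correct and follows exactly the paper's own argument: the paper also proves \Cref{prop:representationSeparable} by first invoking \Cref{lem:measure} to equip $(U,d)$ with a fully supported Borel probability measure and then applying \Cref{cor:isometrySeparableUMS}. The extra care you flag about the support being all of $U$ is the same observation the paper relies on implicitly.
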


A proof of this proposition is provided in \Cref{SS:separable}.
Notice that the proof of the previous proposition is very different from
the original proof of~\cite{lambert_comb_2017} which is no longer valid
for non-compact UMS.

\paragraph{The general case} 
In general, two UMS that are associated to the same coalescent are not
isometric. This essentially comes from the fact that a coalescent only
bears the information about a sequence of ``typical'' points of the UMS,
and that a non-separable UMS may contain more information than the
topology generated by these ``typical'' points. The main idea of our
approach relies on a new decomposition that we now expose.

An UMS $(U, d, \Ui, \mu)$ can be seen as the leaves of a tree. We show
that we can decompose this tree into two parts. The first part is a
separable tree that we call the backbone. Secondly, one can then recover
the tree from the backbone by grafting some ``simple'' subtrees on the
backbone.  By ``simple'', we mean that each of those subtrees has the
sampling properties of a star-tree, in the sense that all points
sampled in the same subtree are at the same distance to each other. See
\Cref{F:quotient} for an illustration of this decomposition, and
\Cref{def:backbone} for a precise definition of the backbone. An object
very similar to the backbone is studied
in~\cite{gufler_representation_2016} but the construction of the backbone
from a general UMS is not considered there.

Our result states that if two UMS have complete backbones and are associated
to the same coalescent, then the backbones are in isometry in a way
that preserves the star-trees attached to it. We say that the two UMS
are in \emph{weak isometry}, see \Cref{def:isometry-backbone}. We provide
the following version of the Gromov reconstruction theorem in the
case of general UMS.

\begin{proposition} \label{prop:weakIsometry}
    Let $(U, d, \Ui, \mu)$ and $(U', d', \Ui', \mu')$ be two UMS with
    complete backbones. These UMS are indistinguishable in the
    Gromov-weak topology \tiff $(U, d, \Ui, \mu)$ and $(U', d',
    \Ui', \mu')$ are in weak isometry.
\end{proposition}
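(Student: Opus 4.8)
The plan is to prove both implications by reducing to the behavior of the coalescent that a UMS generates, exploiting the fact that a UMS has already been identified with its coalescent distribution in the Gromov-weak topology (as in the proof of \Cref{TH:combRepresentation}).

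For the ``only if'' direction, suppose $(U,d,\Ui,\mu)$ and $(U',d',\Ui',\mu')$ are indistinguishable in the Gromov-weak topology, i.e.\ the coalescents obtained by i.i.d.\ $\mu$- resp.\ $\mu'$-sampling have the same law. The first step is to use the completeness of the backbones: I would show that from a sample $(X_i)_{i\ge 1}$ one can, almost surely, reconstruct the backbone as the completion of the set of ``separating'' sample points, together with the family of star-tree ``masses'' hanging off it — concretely, the backbone is the completion of $\supp\mu$ modulo the equivalence ``$x\sim y$ iff every sample point is at the same distance from $x$ as from $y$ beyond some radius'', and the star-tree at a point is detected by the event that infinitely many sample points accumulate at a fixed positive distance. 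Because this reconstruction is a measurable functional of the coalescent (it only uses the pairwise distances $d(X_i,X_j)$, which are exactly read off from the coalescence times $\inf\{t: i\sim_{\Pi_t} j\}$), equality in law of the coalescents forces the reconstructed backbones-with-star-trees to agree in law; a Gromov-type rigidity argument on the separable backbone (the classical reconstruction theorem of \cite{greven_convergence_2006, gromov2007metric} applied to the backbone, which \emph{is} separable and complete) then upgrades this to an actual isometry of the backbones carrying the star-tree structure, i.e.\ a weak isometry in the sense of \Cref{def:isometry-backbone}.

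For the ``if'' direction, suppose $(U,d,\Ui,\mu)$ and $(U',d',\Ui',\mu')$ are in weak isometry, witnessed by an isometry $\varphi$ of the backbones that matches the grafted star-trees and (as part of the definition of weak isometry) pushes $\mu$ forward to $\mu'$ in the appropriate sense. The step here is to check that $\varphi$ induces an equality in law of the sampled coalescents. A sample $(X_i)$ from $\mu$ splits into the sub-population landing in each star-tree plus the residual backbone part; within a single star-tree all sampled points are mutually at the same distance, so the coalescent restricted to such a block is a ``star'' coalescent whose law depends only on the star-tree's mass and height — data preserved by $\varphi$ — and the inter-block distances are exactly the backbone distances, also preserved. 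Assembling these pieces (formally: conditioning on the allocation of sample indices to star-trees versus the backbone, which has the same law on both sides since $\varphi_*\mu=\mu'$ respects the decomposition) shows the two coalescents have the same finite-dimensional distributions, hence the same law, hence Gromov-weak indistinguishability.

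The main obstacle I expect is the ``only if'' direction, specifically making rigorous that the backbone (and the star-tree masses attached to it) is an almost-sure measurable function of the coalescent alone — one must argue that the separating sample points are dense in the backbone (this is where completeness of the backbone, plus the defining property that the backbone carries no ``dust'' of its own, is essential) and that no information about the backbone is lost by passing to the sample. A secondary subtlety is that weak isometry is only an equivalence of the \emph{completed} backbones, so one has to be careful to state the reconstruction at the level of completions and to check that the star-tree decomposition descends to the completion; I would handle this by working throughout with $\supp\mu$ and its completion rather than with $U$ itself, exactly as in the proof of \Cref{cor:isometrySeparableUMS}.
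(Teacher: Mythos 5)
Your proposal is correct and follows essentially the same route as the paper: the paper proves the equivalent Proposition~\ref{prop:samplingEquivalence} by showing (Lemma~\ref{lem:distanceEquivalence}) that the sampled distances a.s.\ equal backbone distances plus the heights $f(X_i)$, with $f(X_i)$ recovered exactly by your ``infinitely many sample points within radius $t$'' criterion, and then runs the Gromov reconstruction argument on equidistributed sequences, using density of the sampled fan in the backbone and completeness to extend to a global isomorphism. The only cosmetic difference is that for the ``if'' direction the paper pushes the map $D$ forward by $\mu_{\cS}^{\otimes\N}$ directly rather than conditioning on the star-tree allocation, but both rest on the same a.s.\ identity.
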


An equivalent reformulation of the previous proposition is stated in
\Cref{SS:samplingEquivalence}, see \Cref{prop:samplingEquivalence},
and proved at the end of \Cref{SS:samplingEquivalence}. As a
consequence of \Cref{prop:weakIsometry} and
\Cref{TH:combRepresentation}, we have the following
version of Theorem~3.1 of~\cite{lambert_comb_2017} in the general
case. See \Cref{SS:combCompletion} for a proof.

\begin{corollary} \label{cor:representationGeneral}
    Let $(U, d, \Ui, \mu)$ be a UMS with a complete backbone. There exists
    a nested interval-partition $(I_t)_{t \ge 0}$ such that, up to the
    addition of a countable number of points, the comb metric measure
    space $([0, 1], d_I, \Is, \Leb)$ is weakly isometric to $(U, d, \Ui, \mu)$.
\end{corollary}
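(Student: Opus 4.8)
The plan is to combine the Gromov-weak representation of \Cref{TH:combRepresentation} with the reconstruction result of \Cref{prop:weakIsometry}, so the main work is to verify that the comb metric measure space produced by \Cref{TH:combRepresentation} has a \emph{complete} backbone (possibly after adding a countable set of points), which is the hypothesis needed to invoke \Cref{prop:weakIsometry}. First I would apply \Cref{TH:combRepresentation} to obtain a nested interval-partition $(I_t)_{t \ge 0}$ whose associated comb metric measure space $([0, 1], d_I, \Is, \Leb)$ is indistinguishable in the Gromov-weak topology from $(U, d, \Ui, \mu)$; equivalently, the two coalescents obtained by sampling have the same law. Since $(U, d, \Ui, \mu)$ has a complete backbone by assumption, it then suffices to produce, from $(I_t)_{t \ge 0}$, a comb metric measure space that \emph{also} has a complete backbone and is still indistinguishable from $(U, d, \Ui, \mu)$: \Cref{prop:weakIsometry} will then give the weak isometry directly.

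The second step is therefore a completion construction at the level of the comb. Recall from \Cref{SS:introComb} that $U_I$ is defined as the completion of the quotient of $\Set{f_I = 0}$ by the relation $d_I(x,y) = 0$, and that, as announced, this completion is realized by adjoining countably many ``left'' and ``right'' faces to the comb (the details are in \Cref{SS:combCompletion}). I would use exactly this enlargement: let $([0,1]^\ast, d_I, \Is^\ast, \Leb)$ denote the comb metric measure space obtained by adding these countably many face points, extending $\Is$ and $d_I$ in the canonical way and keeping the same measure $\Leb$ (the new points form a Lebesgue-null set, so the sampling measure — and hence the sampled coalescent — is unchanged). One checks that adding a countable set does not change the law of the sampled coalescent, so the enlarged comb metric measure space is still indistinguishable from $(U, d, \Ui, \mu)$ in the Gromov-weak topology. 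The point of the enlargement is that the backbone of this comb — which, by \Cref{def:backbone}, is built from the separable subtree spanned by the comb teeth — is now complete: the faces are precisely the limit points of Cauchy sequences of comb points that were previously missing, and by construction of $U_I$ all such limits have been filled in.

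The third and final step is to invoke \Cref{prop:weakIsometry} with the two UMS $(U, d, \Ui, \mu)$ and $([0,1]^\ast, d_I, \Is^\ast, \Leb)$. Both have complete backbones — the former by hypothesis, the latter by the previous step — and they are indistinguishable in the Gromov-weak topology, hence they are in weak isometry by \Cref{prop:weakIsometry}. Since $([0,1]^\ast, d_I, \Is^\ast, \Leb)$ differs from $([0,1], d_I, \Is, \Leb)$ only by the addition of countably many points, this is exactly the assertion of the corollary. The main obstacle I anticipate is the verification in the second step that the backbone of the completed comb is genuinely complete: one must be careful that the ``left/right face'' construction of \Cref{SS:combCompletion} captures \emph{every} missing limit point of the backbone, not merely those arising from monotone sequences of teeth, and that adding these points does not create new Cauchy sequences with no limit. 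This is a routine but slightly delicate topological check, and once it is in place the rest of the argument is immediate from the already-established results.
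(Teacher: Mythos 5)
Your proposal is correct and follows essentially the same route as the paper: the paper's proof likewise chains \Cref{TH:coalescentGeneral} (of which \Cref{TH:combRepresentation} is the direct consequence you invoke), then \Cref{prop:combCompletion} to obtain a comb with complete backbone by adjoining the countable set of face points, and finally \Cref{prop:samplingEquivalence} (the reformulation of \Cref{prop:weakIsometry}) to conclude. The ``delicate topological check'' you flag in your second step is precisely the content of \Cref{prop:combCompletion}, proved in \Cref{app:combCompletion}, so nothing is missing.
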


\begin{figure}
    \center
    \includegraphics[width=.8\textwidth]{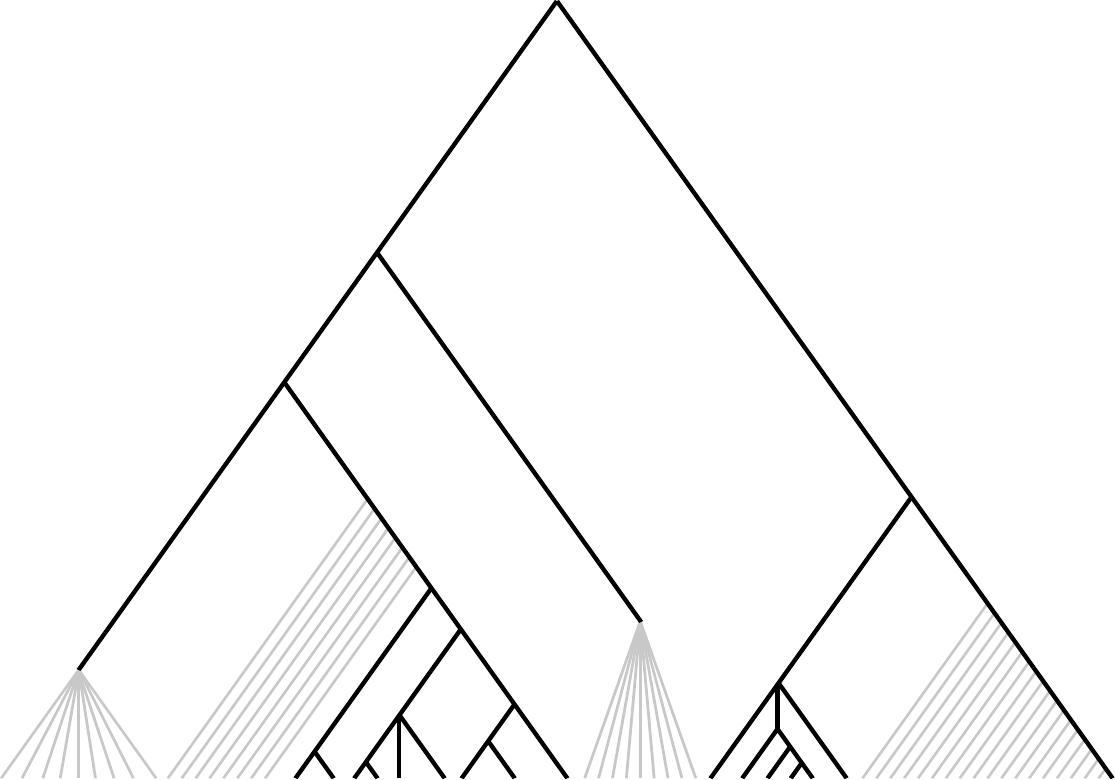}
    \caption{Illustration of the backbone decomposition. The dark thick
    lines represent the backbone. An element of the tree is represented
    in grey if its descendance has zero mass.}
    \label{F:quotient}
\end{figure}

        \subsection{Outline} 

The rest of the paper is divided into three parts. 

In \Cref{S:composition} we introduce the notion of composition and nested
composition which will be our main tool to study combs.
\Cref{SS:composition} introduces the existing material on random
compositions. In \Cref{SS:nestedComposition} we define exchangeable
nested compositions and prove the representation theorem linking combs
and nested compositions. The proof of \Cref{TH:coalescentGeneral} is
given in \Cref{SS:coalescent}.

In \Cref{S:lambda} we restrict our attention to the case of
$\Lambda$-coalescents.  We define there the notion of a $\Lambda$-comb
and study a family of nested compositions emerging from the
$\Lambda$-coalescents. The proof of \Cref{TH:coalescentLambda} is given
in \Cref{SS:proofSemigroup}.  The evolving comb is introduced and studied
in \Cref{SS:dynamicalComb}.

Finally in \Cref{S:ultrametric} we envision combs as ultrametric spaces.
A precise outline of this section is given at the beginning of
\Cref{S:ultrametric}.

\section{Combs and nested compositions}
\label{S:composition}

The objective of this section is to prove \Cref{TH:coalescentGeneral} 
on the comb representation of exchangeable coalescents. As was 
already mentioned in introduction, the correspondence between 
combs and exchangeable coalescents cannot be bijective. Roughly
speaking, this comes from the fact that a nested interval-partition
inherits an order from $[0, 1]$, and that changing this order
does not modify the associated coalescent. However, we will
show in \Cref{SS:nestedComposition} that there is a bijective correspondence
between nested interval-partitions and exchangeable nested compositions,
the ordered version of exchangeable coalescents. Exchangeable
nested compositions will be our main tool to study combs. 

We start this section by recalling existing results and material on exchangeable
compositions developed in~\cite{gnedin_representation_1997, donnelly_consistent_1991}
and then show how to extend them to nested compositions.

    \subsection{Exchangeable compositions}\label{SS:composition}

In combinatorics, a composition of $[n]$ (resp.\ $\N$) is a 
partition of $[n]$ (resp.\ $\N$) with a total order on the blocks.
We write $\Ci = (\pi, \le)$ for a composition of $\N$ where $\pi$ is
the partition and $\le$ the order on the blocks. 
The blocks of the partition $\pi$ can always be labeled in
increasing order of their least element, i.e.\ the blocks
of $\pi$ are denoted by $(A_1, A_2, \dots)$ and are such 
that for any $i,j \ge 1$,
\[
    i \le j \iff \min(A_i) \le \min(A_j).
\]
Let $\sigma$ be a finite permutation of $\N$, we denote by
$\sigma(\Ci)$ the composition whose blocks are $(\sigma(A_1), \sigma(A_2), \dots)$
and such that the order of the blocks is
\[
    \sigma(A_i) \le \sigma(A_j) \iff A_i \le A_j.
\]
For example, for $n = 5$, consider $\Ci^n$ the composition
\[
    \Ci^n = \Set{2, 3} \le \Set{5} \le \Set{1, 4}.
\]
With our labeling convention, we have 
$A_1 = \Set{1, 4}$, $A_2 = \Set{2, 3}$ and $A_3 = \Set{5}$ ($A_1$ needs not be
the first block of $\Ci$ for the order $\le$).
If $\sigma = (2, 1, 3, 5, 4)$, the composition $\sigma(\Ci^n)$ is given by
\[
    \sigma(\Ci^n) = \Set{1, 3} \le \Set{4} \le \Set{2, 5}.
\]  
A random composition $\Ci$ of $\N$ is called \emph{exchangeable} if for any finite
permutation $\sigma$,
\[
    \Ci \overset{\text{(d)}}{=} \sigma(\Ci).
\]

\cite{gnedin_representation_1997} provides a procedure to build an
exchangeable composition of $\N$ from any interval-partition $I$ called
the \emph{ordered paintbox}. Let $(V_i)_{i \ge 1}$ be an i.i.d.\ sequence
of uniform $[0,1]$ variables.  Let $\Ci$ be the composition of $\N$ whose
blocks are given by the relation
\[
    i \sim j \iff \text{$V_i$ and $V_j$ belong to the same interval component
    of $I$}
\]
and the order of the blocks is
\[
    A \le A' \iff V_i \le V_j, \quad \forall i \in A,\; \forall j \in A'.
\]
The main result of \cite{gnedin_representation_1997} shows that any
exchangeable composition of $\N$ can be obtained as an ordered paintbox
based on a random interval-partition (see Theorem~11 in
\cite{gnedin_representation_1997}). We now give a proof of this
result that differs from the original proof
of~\cite{gnedin_representation_1997}. We make use of de Finetti's theorem
in a similar way as Aldous' proof of Kingman's theorem, see e.g.\ the 
proof of Theorem~2.1 in~\cite{bertoin_2006}. The original proof
of~\cite{gnedin_representation_1997} relies on a reversed martingale
argument combined with the method of moments.

\begin{theorem}[Gnedin] \label{TH:gnedin}
    Let $\Ci$ be an exchangeable composition of $\N$. There exists
    on the same probability space a random
    interval-partition $I$ and an independent i.i.d.\ sequence $(V_i)_{i
    \ge 1}$ of uniform $[0, 1]$ variables such that the ordered paintbox
    based on $I$ by the sequence $(V_i)_{i \ge 1}$ is a.s.\ $\Ci$.
\end{theorem}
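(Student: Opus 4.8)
The plan is to follow Aldous' strategy for Kingman's theorem, as the authors announce: condition on the de Finetti $\sigma$-field of the exchangeable sequence that encodes the composition, read off an interval-partition from the resulting directing measure, and then check that the conditional law of the composition given this interval-partition is exactly the ordered paintbox.

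First I would encode the composition $\Ci$ by a single exchangeable sequence. The natural choice is to record, for each $i$, the ``position'' of the block of $i$: for $i,j$ in the composition, $i$ and $j$ are in the same block \tiff they get the same value, and the order of the values matches the order of the blocks. Concretely, sample an auxiliary i.i.d.\ uniform sequence $(W_i)_{i\ge 1}$ independent of $\Ci$, break ties within each block using the $W_i$'s (keeping blocks separated by a gap), and more importantly use the $W_i$'s only as an independent randomization device; the honest object is the $\{0,1\}$-valued array $\xi_{ij}=\Indic{i\le_{\Ci} j \text{ or same block}}$ together with the same-block relation. The cleanest route: define the exchangeable sequence $Y_i$ to be the pair consisting of the rank statistics of $i$'s block among all blocks and an independent uniform $W_i$ used to resolve same-block elements — but since there may be infinitely many blocks, it is simpler to directly invoke the representation for exchangeable \emph{arrays} or, better, to work with the exchangeable sequence $Z_i := (\Indic{i\le_\Ci j})_{j\ge 1}$ reordered appropriately. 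I would instead take the most economical path: let $(U_i)_{i\ge1}$ be i.i.d.\ uniform, independent of $\Ci$, and set
\[
    Y_i = \big(\text{the }\Ci\text{-block of }i,\ U_i\big)
\]
viewed through the order; then $(Y_i)$ is an exchangeable sequence in a suitable Polish space (one can realize block-labels as elements of a totally ordered Polish space, e.g.\ by mapping the block of $i$ to $\inf\{U_j : j \sim_\Ci i\}$... ) — this is exactly where care is needed.

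So the key steps, in order. (1) Build from $\Ci$ and an independent i.i.d.\ uniform sequence a genuinely exchangeable i.i.d.-directed sequence $(\hat U_i)_{i\ge1}$ in $[0,1]$ whose same-value relation recovers the blocks of $\Ci$ and whose order recovers the order of the blocks; the construction assigns to block $A_k$ a "height" determined by an independent uniform and scatters the elements of $A_k$ within a tiny sub-interval using their own uniforms, taking a limit/quotient so that same-block elements collapse to one value while the order is preserved a.s. (2) Apply de Finetti's theorem: conditionally on the tail $\sigma$-field $\mathcal{T}$, the $\hat U_i$ are i.i.d.\ with some random directing measure $\alpha$ on $[0,1]$. (3) From $\alpha$ read off the interval-partition $I$: let $F_\alpha$ be the cdf, let the atoms of $\alpha$ correspond to the interval components of $I$ (an atom of mass $m$ becomes an interval of length $m$), and the continuous part of $\alpha$ becomes the "dust" — concretely $I := (0,1)\setminus \supp(\tilde\alpha)$ after pushing forward by the continuous-part quantile transform; this is the standard dictionary between sub-probability measures modulo the continuous part and interval-partitions. (4) Check that, conditionally on $I$ (equivalently on $\mathcal T$), the composition $\Ci$ has exactly the law of the ordered paintbox based on $I$: given $\alpha$, the $\hat U_i$ are i.i.d.\ $\alpha$, two of them coincide \tiff they fall in the same atom, which matches "same interval component", and their order matches, so the induced composition is precisely the ordered paintbox; then average over $\mathcal T$. (5) Finally, since the ordered paintbox only depends on $\alpha$ through $I$, and since we may re-randomize, produce on a possibly enlarged space the i.i.d.\ uniform sequence $(V_i)$ independent of $I$ realizing the paintbox a.s.

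The main obstacle I expect is step (1): constructing an honest exchangeable real-valued sequence that faithfully encodes both the (possibly infinitely many) unordered blocks \emph{and} their total order, while making the "same block" relation show up as literal equality of real values so that de Finetti's directing measure can encode it as atoms. One must be careful that the order on blocks induced by the encoding agrees a.s.\ with $\le_\Ci$ even when there are infinitely many blocks accumulating, and that exchangeability of $\Ci$ transfers to exchangeability of $(\hat U_i)$ (this uses that finite permutations act on $\Ci$ by permuting which integers sit in which block, which is exactly what permuting the $\hat U_i$ does). A secondary technical point is the measure-theoretic bookkeeping in step (3)–(4): the map $\alpha \mapsto I$ is not injective (the continuous part is forgotten, and the placement of atoms relative to the continuous part is forgotten), so one must verify that the conditional law of $\Ci$ given $\mathcal T$ genuinely factors through $I$ — which it does, precisely because reordering the atoms of $\alpha$ or redistributing its continuous mass does not change the ordered-paintbox law. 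Once these are in place, the theorem follows by integrating the conditional law against the law of $I$.
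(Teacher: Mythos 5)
Your high-level plan coincides with the paper's: encode the composition as an exchangeable real-valued sequence, apply de Finetti, read an interval-partition off the directing measure, and spread the atoms to manufacture the i.i.d.\ uniform sequence $(V_i)$. But the step you yourself flag as the main obstacle --- step (1) --- is where the actual content of the proof lives, and the candidate constructions you sketch do not work. Assigning to each block an independent uniform ``height'' makes the relative order of the heights uniformly random, so the order read off from $(\hat U_i)$ is a uniformly reshuffled version of the block order, not $\le_{\Ci}$; likewise $\inf\{U_j : j\sim_{\Ci} i\}$ bears no relation to the position of the block in the composition. Sorting auxiliary uniforms by block order is also problematic: with infinitely many blocks the order type can be dense, and even when it makes sense the resulting directing measure places its atoms at random locations, so the continuous (dust) part is not distributed correctly \emph{between} the atoms, and your step (3) quantile dictionary then produces an interval-partition with the dust in the wrong places relative to the blocks.

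The missing idea is the canonical encoding the paper uses: set $D_i=\{k : k\preceq i\}$ (where $\preceq$ is the total preorder of the composition) and define $\xi_i=\lim_{n\to\infty}\frac1n\Card(D_i\cap[n])$, the asymptotic frequency of the set of indices weakly to the left of $i$. This needs no auxiliary randomness, is automatically exchangeable, and --- this is a genuine lemma, not a formality --- satisfies $i\preceq j \iff \xi_i\le\xi_j$; the hard direction ($\xi_i\le\xi_j\Rightarrow i\preceq j$) requires showing that $D_i\setminus D_j$ a.s.\ has positive asymptotic frequency whenever $i\not\preceq j$, which the paper proves via Kingman's dichotomy (blocks of an exchangeable partition are singletons or have positive frequency) plus a combinatorial argument that $D_i\setminus D_j$ cannot be a singleton. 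This construction also buys you for free exactly what your steps (3)--(4) need: the directing measure $\mu$ is \emph{uniformized} ($F_\mu(\xi_1)=\xi_1$ a.s., by the law of large numbers), so its diffuse part is Lebesgue on the complement of the flats of $F_\mu^{-1}$ and its atoms sit at the right endpoints of the interval components with mass equal to their lengths. The interval-partition is then read off directly with no quantile pushforward, and spreading each atomic $\xi_i$ uniformly over its interval yields a sequence $(V_i)$ that is conditionally i.i.d.\ uniform given $\mu$, hence unconditionally i.i.d.\ uniform and independent of $I_\mu$. Without the canonical $\xi_i$ and the uniformization property, your steps (3)--(5) do not go through as written.
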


Before showing the theorem we need a technical lemma. Any composition
$\Ci = (\pi, \le)$ can be encoded as a total preorder $\preceq$ on
$\N$ defined as
\[
    i \preceq j \iff B_i \le B_j
\]
where $B_i$ (resp.\ $B_j$) is the block containing $i$ (resp.\ $j$). 
The blocks of $\pi$ can be recovered from $\preceq$ by the following 
relation
\[
    i \sim j \iff i \preceq j \text{ and } j \preceq i
\]
and the order $\le$ by 
\[
    B \le B' \iff i \preceq j, \quad \forall i \in B, \forall j \in B'.
\]
\begin{lemma}
    Let $\Ci$ be an exchangeable composition of $\N$. We can find
    an exchangeable sequence of $[0, 1]$-valued random variables $(\xi_i)_{i \ge 1}$
    such that
    \[
        i \preceq j \iff \xi_i \le \xi_j.
    \]
\end{lemma}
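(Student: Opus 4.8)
The plan is to encode the preorder $\preceq$ through asymptotic frequencies. For $n \ge 1$ write $\preceq_n$ for the restriction of $\preceq$ to $[n]$ and set
\[
    \xi_i^{(n)} \ = \ \frac1n \,\#\Set{j \in [n] \suchthat j \preceq_n i}, \qquad i \in [n].
\]
Because $j \sim i$ implies $\Set{k \suchthat k \preceq i} = \Set{k \suchthat k \preceq j}$, while $j \prec i$ implies the proper inclusion $\Set{k \suchthat k \preceq j} \subsetneq \Set{k \suchthat k \preceq i}$, the vector $(\xi_i^{(n)})_{i \in [n]}$ realizes $\preceq_n$: one has $i \preceq_n j \iff \xi_i^{(n)} \le \xi_j^{(n)}$. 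Moreover $\xi^{(n)}$ is a deterministic function of $\preceq_n$, and a direct check shows that relabelling $\preceq_n$ by $\sigma \in S_n$ turns $\xi^{(n)}$ into $(\xi_{\sigma(i)}^{(n)})_i$; since $\mathcal{C}$ is exchangeable, $(\xi_i^{(n)})_{i \in [n]}$ is $S_n$-exchangeable. For fixed $i$ the sequence $(\Indic{k \preceq i})_{k \ne i}$ is exchangeable (apply exchangeability of $\mathcal{C}$ to permutations fixing $i$), so de Finetti's theorem gives an a.s.\ limit $\xi_i^{(n)} \to \xi_i \in [0, 1]$. Passing to the limit transfers $S_n$-exchangeability to the whole sequence, so $(\xi_i)_{i \ge 1}$ is exchangeable, and since $\xi_i^{(n)} \le \xi_j^{(n)}$ for every $n$ whenever $i \preceq j$ we obtain $i \preceq j \implies \xi_i \le \xi_j$.

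It remains to prove the converse, namely that a.s.\ $j \prec i \implies \xi_j < \xi_i$; by exchangeability and a countable union over pairs it suffices to show $\mathbb{P}(1 \prec 2,\ \xi_1 = \xi_2) = 0$. On the event $\{1 \prec 2\}$, writing $D = \Set{k \suchthat 1 \prec k \preceq 2}$, one has $\xi_2 - \xi_1 = \lim_n \frac1n \#(D \cap [n])$, so we must show that $D$ has strictly positive asymptotic frequency a.s.\ on $\{1 \prec 2\}$. The first ingredient is a counting estimate: for $d \ge 1$, using $S_n$-exchangeability to replace the pair $(1,2)$ by an average over ordered pairs,
\[
    \mathbb{P}\bigl(1 \prec_n 2,\ \#(D \cap [n]) \le d\bigr) \ = \ \frac{1}{n(n-1)}\,\mathbb{E}\Bigl[\#\Set{(i,j) \suchthat i \prec_n j,\ \#\Set{k \le n \suchthat i \prec_n k \preceq_n j} \le d}\Bigr] \ \le \ \frac{d}{n-1},
\]
the last inequality because, for each fixed $i$, the indices $j$ with $i \prec_n j$ and $\#\Set{k \le n \suchthat i \prec_n k \preceq_n j} \le d$ all lie in a window of at most $d$ elements, hence there are at most $d$ of them. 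Since $\#(D \cap [n])$ increases to $\#D$, letting $n \to \infty$ yields $\mathbb{P}(1 \prec 2,\ D \text{ finite}) = 0$.

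The second ingredient is de Finetti's theorem again: on $\{1 \prec 2\}$ the sequence $(\Indic{k \in D})_{k \ge 3}$ is exchangeable with directing parameter $\lim_n \frac1n \#(D \cap [n]) = \xi_2 - \xi_1$, so conditionally on this parameter vanishing the sequence is a.s.\ identically zero, which forces $D = \Set{2}$. Hence $\{1 \prec 2,\ \xi_1 = \xi_2\} \subseteq \{1 \prec 2,\ D \text{ finite}\}$ up to a null set, and combining with the previous step gives $\mathbb{P}(1 \prec 2,\ \xi_1 = \xi_2) = 0$; the case $\{2 \prec 1\}$ is symmetric, which finishes the proof.

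I expect the strict-monotonicity direction to be the real obstacle. A natural alternative — defining $(\xi_i)$ recursively by placing the successive integers one at a time, each new level being uniform in the gap it opens — already fails to be exchangeable (this can be seen on two integers and a two-block ordered paintbox), which is what forces the frequency construction above. Within that construction the delicate point is precisely that two inequivalent ``dust'' classes could a priori receive the same asymptotic frequency; ruling this out is exactly the role of the counting estimate combined with the degeneracy of a de Finetti directing measure concentrated at $0$.
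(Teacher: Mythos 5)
Your proof is correct and follows essentially the same route as the paper's: define $\xi_i$ as the asymptotic frequency of $D_i=\Set{k \suchthat k\preceq i}$ (Kingman/de Finetti), get exchangeability by passing to the limit, and for the strict-monotonicity direction combine the dichotomy ``directing frequency zero forces the gap set to be essentially empty'' with a counting argument showing the gap set cannot be that small. Your explicit $d/(n-1)$ bound is a cleaner, quantitative version of the paper's somewhat terse ``labels are uniform, so the probability that $(D_i\setminus D_j)\cap[n]$ is a singleton tends to $0$'' step, but the underlying idea is identical.
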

\begin{proof}
    Let $D_i$ be the set of integers lower than $i$
    \[
        D_i = \Set{k \suchthat k \preceq i}.
    \]
    It is immediate that the partition $(D_i \setminus \Set{i}, \N
    \setminus \Set{i} \setminus D_i)$ 
    is an exchangeable partition of $\N \setminus \Set{i}$. Thus Kingman's
    representation theorem (see e.g.\ Theorem~2.1 in \cite{bertoin_2006})
    ensures that the limit
    \[
        \xi_i = \lim_{n \to \infty} \frac{1}{n} \Card(D_i \cap [n])
    \]
    exists a.s. Fix a finite permutation $\sigma$ whose support lies in
    $[n]$, i.e.\ such that $\sigma(i) = i$ for $i \ge n$. For $m \ge n$, 
    the distribution of $(\Card(D_i \cap [m]))_{i \ge 1}$ is invariant by
    the action of $\sigma$. Taking the limit, the distribution of the
    sequence $(\xi_i)_{i \ge 1}$ is also invariant by the action of
    $\sigma$, and thus it is an exchangeable sequence.

    We need to show that 
    \[
        i \preceq j \iff \xi_i \le \xi_j.
    \]
    The only difficulty here is to show that $\xi_i \le \xi_j$ implies $i
    \preceq j$. Suppose that $i \not\preceq j$, we need to show that 
    \[
        \xi_i - \xi_j = \lim_{n \to \infty} \frac{1}{n}\Card\big( (D_i \setminus D_j) \cap [n] \big) > 0.
    \]
    The partition $(D_j \setminus \Set{i,j},\; D_i \setminus \Set{i,j} \setminus D_j,\; 
    \N \setminus \Set{i,j} \setminus D_i)$ is an exchangeable partition of 
    $\N \setminus \Set{i,j}$. Another interesting consequence of Kingman's theorem
    is that in any exchangeable partition, the blocks are either singletons or have 
    positive asymptotic frequencies. According to this, it is sufficient to show that 
    a.s.\ $D_i \setminus D_j$ has at least two elements that are not $i$. 
    Consider $B_i$ (resp.\ $B_j$)
    the block to which $i$ (resp.\ $j$) belongs. The set $D_i \setminus D_j$
    is the reunion of all the blocks $B$ such that $B_j < B \le B_i$. 
    Thus $D_i \setminus D_j$ is a singleton \tiff $B_i = \Set{i}$
    and there exists at most one singleton block $B$ such that $B_j < B < B_i$. Let $n \ge 1$
    and consider the block sizes and order of $\Ci^n$ as fixed. Exchangeability
    shows that the labels inside the blocks are chosen uniformly among all
    the possibilities. In particular this shows that the probability that 
    $(D_i \setminus D_j) \cap [n]$ is a singleton goes to $0$ as $n$ goes
    to infinity.
\end{proof}

Now \Cref{TH:gnedin} is essentially a corollary of the previous lemma and of de Finetti's theorem.
\begin{proof}[Proof of \Cref{TH:gnedin}]
    Let $(\xi_i)_{i \ge 1}$ be as above. Applying de Finetti's theorem
    we know that there exists a random measure $\mu$ such that conditionally on it the
    sequence $(\xi_i)_{i \ge 1}$ is i.i.d.\ distributed as $\mu$.
    Consider the distribution function $F_\mu$ of $\mu$, and its generalized
    inverse
    \[
        F_\mu^{-1}(x) = \inf \Set{r \suchthat F_\mu(r) > x}.
    \]
    The interval-partition associated with $\mu$, $I_\mu$, is defined as the set of flats of
    $F^{-1}_\mu$:
    \[
        I_\mu = \Set{x \in [0,1] \suchthat \exists y < x < z,\; F^{-1}_\mu(y) = F^{-1}_\mu(z)}.
    \]

    The measure $\mu$ has the property that if $X$ is distributed as $\mu$, then 
    $\mu$-a.s.\ $F_\mu(X) = X$. Conditioning on $\mu$, this can be seen from the
    definition of the sequence $(\xi_i)_{i \ge 1}$ and the law of large numbers:
    \[
        F_\mu(\xi_1) = \lim_{n \to \infty} \frac{1}{n} \sum_{j = 1}^n \Indic{\xi_j \le \xi_1}
                   = \lim_{n \to \infty} \frac{1}{n} \sum_{j = 1}^n \Indic{j \preceq 1}
                   = \xi_1 \quad \text{$\mu$-a.s.}
    \]
    In the terminology of \cite{gnedin_representation_1997} this shows
    that the measure $\mu$ is \emph{uniformized}.  A uniformized measure
    has an atomic and a diffuse part. The support of the diffuse part is
    $[0, 1] \setminus I_\mu$ and coincides with the Lebesgue measure. The
    atomic part is supported by the right endpoints of the interval
    components of $I_\mu$.  If $J = \OpenInterval{\ell, r}$ is an interval
    component of $I_\mu$, the measure $\mu$ has an atom of mass $r-\ell$
    located at $r$.

    Let $(J_k)_{k \ge 1}$ be the interval decomposition of $I_\mu$, and write
    $J_k = \OpenInterval{\ell_k, r_k}$. Let $(X_i)_{i \ge 1}$ be an independent
    i.i.d.\ sequence of uniform variables, we define
    \[
        V_i =
        \begin{cases}
            \xi_i &\text{if $\xi_i \not \in I_\mu$} \\
            (r_k - \ell_k) X_i + \ell_k &\text{if $\xi_i = r_k$}.
        \end{cases}
    \]
    In words, the variables from the sequence $(\xi_i)_{i \ge 1}$ which
    are equal to the atom $r_k$ are uniformly dispersed over the interval
    $J_k$.  The previous remarks on the structure of uniformized measures
    show that conditionally on $\mu$, the sequence $(V_i)_{i \ge 1}$ is
    i.i.d.\ uniform on $[0, 1]$.  The conditional distribution does not
    depend on $\mu$, thus the sequence $(V_i)_{i \ge 1}$ is independent
    of $\mu$ and of $I_\mu$.

    We only need to show that the ordered paintbox based on $I_\mu$ using the sequence
    $(V_i)_{i \ge 1}$ is $\Ci$ a.s. This is plain from the design of the sequence. 
\end{proof}

We end this section with a technical result already present in
\cite{gnedin_representation_1997} (see Proposition~9) which we will
require. Let $\Ci$ be an exchangeable composition of $\N$ and $\Ci^n$ its
restriction to $[n]$.  Let us denote by $n_i$ the size of the $i$-th
block of $\Ci^n$. The \emph{empirical interval-partition} associated to
$\Ci_n$ is given by
\[
    I^n = (0, \frac{n_1}{n}) \cup (\frac{n_1}{n}, \frac{n_1 + n_2}{n}) \cup
    \dots \cup (\frac{n_1 + \dots + n_{k-1}}{n}, 1).
\]
Here is a more pictorial way of constructing $I^n$. Divide $[0, 1]$ in
intervals of size $1/n$ and label them from $1$ to $n$ in such a way that
$i \preceq j$ \tiff the block with label $i$ is before the block with
label $j$. Then $I^n$ is obtained by merging the intervals whose labels
are in the same block of the composition.  The next result states that
the interval-partition representing $\Ci$ in \Cref{TH:gnedin} can be
obtained as the limit of the empirical interval-partitions.
\begin{proposition} \label{PR:empirical}
    If $\Ci$ is an exchangeable composition of $\N$, $I$ the interval-partition
    obtained from \Cref{TH:gnedin} and $(I^n)_{n \ge 1}$ the sequence of empirical
    interval-partitions associated to $\Ci$, we have
    \[
        \lim_{n \to \infty} d_H(I^n, I \setminus \Set{0, 1}) = 0 \quad a.s.
    \]
\end{proposition}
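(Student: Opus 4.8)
The plan is to realize the empirical interval-partitions $I^n$, exactly like $I$ itself, as the interval-partition of flats attached to a probability measure on $[0,1]$, and then to reduce the convergence to the uniform convergence of empirical cumulative distribution functions. For a probability measure $\nu$ on $[0,1]$ write $F_\nu(x)=\nu([0,x])$ for its c.d.f.\ and, as in the proof of \Cref{TH:gnedin}, let $I_\nu$ be the set of flats of $F_\nu^{-1}$; recall also that by definition $d_H(I,\tilde I)$ is precisely the Hausdorff distance between the closed sets $[0,1]\setminus I$ and $[0,1]\setminus\tilde I$, and that since $I\subseteq\OpenInterval{0,1}$ one has $[0,1]\setminus(I\setminus\Set{0,1})=[0,1]\setminus I$. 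So it suffices to control $d_H([0,1]\setminus I^n,[0,1]\setminus I)$.

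First I would record two purely deterministic facts. \emph{(a)} If $\nu$ has no atom at $0$, then $\overline{\mathrm{range}(F_\nu)}=[0,1]\setminus I_\nu$: a direct computation with the generalized inverse shows that the interval components of $I_\nu$ are exactly the intervals $\OpenInterval{\nu([0,r)),\nu([0,r])}$ over atoms $r$ of $\nu$, and $F_\nu$ jumps over each of them, so $\mathrm{range}(F_\nu)$ is disjoint from $I_\nu$; conversely, a point $y\notin\overline{\mathrm{range}(F_\nu)}$ sits in a maximal open interval $\OpenInterval{a,b}$ avoiding the range, and then $r\defas\inf\Set{x\suchthat F_\nu(x)\ge b}$ is an atom of $\nu$ whose interval component contains $\OpenInterval{a,b}\ni y$, so $y\in I_\nu$ (this is the only place ``no atom at $0$'' enters, to avoid treating the boundary point $0$). \emph{(b)} For any two real functions $f,g$ on $[0,1]$, $d_H(\overline{\mathrm{range}(f)},\overline{\mathrm{range}(g)})\le\|f-g\|_\infty$, since for $y=f(x)$ the point $g(x)$ lies within $\|f-g\|_\infty$ of $y$ and one passes to closures.

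Then I would unwind \Cref{TH:gnedin}. There $I=I_\mu$, where $\mu$ is the de Finetti directing measure of the exchangeable sequence $(\xi_i)_{i\ge1}$ with $i\preceq j\iff\xi_i\le\xi_j$, and $\mu$ has no atom at $0$ because its atoms sit at the (strictly positive) right endpoints of the interval components of $I_\mu$. Since the blocks of $\Ci$ are the level sets of $(\xi_i)_{i\ge1}$ ordered by value, the blocks of $\Ci^n$ listed in preorder are the sets $\Set{i\le n\suchthat\xi_i=v_j}$ for $v_1<\dots<v_k$ the distinct values of $\xi_1,\dots,\xi_n$, so the partial sums of their sizes are $\tfrac{n_1+\dots+n_j}{n}=F_{\mu_n}(v_j)$, where $\mu_n=\tfrac1n\sum_{i=1}^n\delta_{\xi_i}$ is the empirical measure. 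Comparing with the definition of $I^n$ gives $I^n=I_{\mu_n}$ almost surely, and $\mu_n$ almost surely has no atom at $0$ because $\mu$ has none. Applying fact \emph{(a)} to $\mu$ and to $\mu_n$, then fact \emph{(b)} with $f=F_{\mu_n}$ and $g=F_\mu$, we obtain almost surely
\[
    d_H\big(I^n,\,I\setminus\Set{0,1}\big)=d_H\big([0,1]\setminus I^n,\,[0,1]\setminus I\big)\le\big\|F_{\mu_n}-F_\mu\big\|_\infty .
\]
Conditionally on $\mu$ the sequence $(\xi_i)_{i\ge1}$ is i.i.d.\ with law $\mu$ and $F_{\mu_n}$ is its empirical c.d.f., so the Glivenko--Cantelli theorem drives the right-hand side to $0$ almost surely conditionally on $\mu$, hence almost surely, which is the claim.

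The only genuinely delicate step is fact \emph{(a)} — matching the connected components of $I_\nu$ with the jumps of $F_\nu$ through the generalized inverse, and checking the behaviour at the boundary; but this is routine bookkeeping for monotone right-continuous functions, and the boundary subtlety at $0$ is harmless since it is excluded almost surely. Everything else is a straightforward translation of the objects plus a classical limit theorem.
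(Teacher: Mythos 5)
Your proof is correct and follows the same overall route as the paper's: both arguments reduce the statement to the convergence of the empirical measures $\mu_n=\frac1n\sum_{i\le n}\delta_{\xi_i}$ of the de Finetti variables constructed in the proof of \Cref{TH:gnedin}, after identifying the empirical interval-partition $I^n$ with $I_{\mu_n}$. The difference is in how the final continuity step is handled. The paper invokes the a.s.\ weak convergence $\mu_n\to\mu$ and then cites Gnedin's observation that weak convergence of uniformized measures implies Hausdorff convergence of the associated interval-partitions, and it also asserts the identity $I^n=I_{\mu_n}$ without proof. You instead supply a self-contained quantitative substitute: the identification of $[0,1]\setminus I_\nu$ with $\overline{\mathrm{range}(F_\nu)}$ (valid when $\nu$ has no atom at $0$, which you correctly verify for $\mu$, via uniformization, and hence for $\mu_n$) yields the Lipschitz bound $d_H(I_{\mu_n},I_\mu)\le \lVert F_{\mu_n}-F_\mu\rVert_\infty$, and the Glivenko--Cantelli theorem applied conditionally on $\mu$ concludes. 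Your version buys an explicit estimate in the Kolmogorov distance and removes the reliance on the cited continuity lemma (which, under mere weak convergence, is not entirely obvious), at the cost of the bookkeeping in your fact \emph{(a)}; the paper's version is shorter but delegates both the identification $I^n=I_{\mu_n}$ and the continuity step to the reader and to the reference.
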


\begin{proof}
    Let $\mu$, $(\xi_i)_{i \ge 1}$ and $I_\mu$ be as in the proof of \Cref{TH:gnedin}. 
    De Finetti's theorem ensures that
    \[
        \lim_{n \to \infty} \mu_n \defas \frac{1}{n} \sum_{i = 1}^n \delta_{\xi_i} = \mu \quad a.s.
    \]
    in the sense of weak convergence of probability measures. The
    interval-partition $I_{\mu_n}$ coincides with the empirical
    interval-partition $I^n$ and as was already noticed in
    \cite{gnedin_representation_1997}, the weak convergence of $\mu_n$ to
    $\mu$ implies the convergence of $I_{\mu_n}$ to $I$ in the Hausdorff
    topology.
\end{proof}

\begin{remark}
    This also shows that the representation obtained through
    \Cref{TH:gnedin} is unique in distribution. The interval-partition
    $I$ is a.s.\ recovered from $I^n$ whose distribution is fully
    determined by $\Ci$. 
\end{remark}

    \subsection{Exchangeable nested compositions}
    \label{SS:nestedComposition}

Gnedin's theorem sets up a correspondence between random
interval-partitions and exchangeable compositions. We want to find a
similar correspondence between nested interval-partitions and
exchangeable nested compositions, the ordered version of exchangeable
coalescents.  A nested composition of $[n]$ (resp.\ $\N$) is a
c\`adl\`ag process $(\Ci_t)_{t \ge 0}$ taking values in the
compositions of $[n]$ (resp.\ $\N)$ such that, as $t$ increases, only
adjacent blocks of the composition merge. More precisely, if $(\Ci_t)_{t
\ge 0}$ is a nested composition, for any $s \le t$, the blocks of $\Ci_t$
are obtained by merging blocks of $\Ci_s$, and if $A \le B$ are two
blocks of $\Ci_s$ that merge, they also merge with any block $C$ such
that $A \le C \le B$.

Naturally we say that $(\Ci_t)_{t \ge 0}$ is an exchangeable nested composition
of $\N$ if for any finite permutation $\sigma$ we have
\[
    (\Ci_t)_{t \ge 0} \overset{\text{(d)}}{=} (\sigma(\Ci_t))_{t \ge 0}.
\]
We can extend the ordered paintbox construction to nested compositions. Let
$(I_t)_{t \ge 0}$ be a nested interval-partition, and $(V_i)_{i \ge 1}$
an independent i.i.d.\ uniform sequence. Let $\Ci_t$ be the composition
obtained from the ordered paintbox based on $I_t$ by $(V_i)_{i \ge 1}$. 
Then it is immediate that $(\Ci_t)_{t \ge 0}$ is an exchangeable nested
composition. Notice that this is only true because we have used the same 
sequence $(V_i)_{i \ge 1}$ for all times $t$. 

\begin{remark} \label{rem:compositionCadlag}
    Similarly to \Cref{rem:cadlag}, the nested composition obtained
    from an ordered paintbox is not c\`adl\`ag in general. Again it
    admits a unique c\`adl\`ag modification and we shall always consider this
    modification.
\end{remark}

We have the following direct reformulation of \Cref{TH:gnedin} in the
framework of nested compositions.

\begin{theorem} \label{TH:gnedinNested}
    Let $(\Ci_t)_{t \ge 0}$ be an exchangeable nested composition of
    $\N$.  We can find on the same probability space a nested
    interval-partition $(I_t)_{t \ge 0}$ and an independent i.i.d.\
    sequence $(V_i)_{i \ge 1}$ of uniform variables such that a.s.\ the
    ordered paintbox based on $(I_t)_{t \ge 0}$ with $(V_i)_{i \ge 1}$ is
    $(\Ci_t)_{t \ge 0}$. This nested interval-partition is unique in
    distribution.
\end{theorem}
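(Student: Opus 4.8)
The plan is to run Gnedin's construction (\Cref{TH:gnedin}) simultaneously for all times, carefully enough that the resulting family of interval-partitions is c\`adl\`ag and nested. First I would fix a countable dense set $D \subseteq \R_+$ containing $0$. For each $t \in D$, the composition $\Ci_t$ is an exchangeable composition of $\N$, so \Cref{TH:gnedin} provides asymptotic frequencies $\xi_i^t = \lim_n \tfrac1n \Card(D_i^t \cap [n])$, where $D_i^t = \Set{k \suchthat k \preceq_{\Ci_t} i}$. The key observation is that these frequencies can be constructed coherently across $t$: the limits $\xi_i^t$ exist simultaneously for all $t \in D$ on a single almost-sure event, and one should check that the map $t \mapsto \xi_i^t$ inherits monotonicity properties from the nesting of $(\Ci_t)$ — namely if two integers $i,j$ satisfy $i \preceq_{\Ci_s} j$ and $j \preceq_{\Ci_s} i$ (same block at time $s$), then the same holds at all later times, and the relative order of blocks is preserved. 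Rather than building a new $\xi^t$ for each $t$, a cleaner route is to build a single family: from the nested composition one extracts, for the ``generic'' individuals, their coalescence pattern, and one can define a single sequence of $[0,1]$-valued variables $(\xi_i)_{i \ge 1}$ (as in the lemma preceding \Cref{TH:gnedin}, applied at time $0$ or to the whole trajectory) together with, for each $t$, the partition of the $\xi_i$'s into the blocks of $\Ci_t$.

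Next I would apply de Finetti's theorem to the sequence $(\xi_i)_{i\ge 1}$ to obtain a directing random measure $\mu$, hence an interval-partition $I = I_\mu$, and observe that for each $t \in D$ the conditional law of the partition $\Ci_t$ of the $\xi_i$'s, given $\mu$, is that of an ordered paintbox. The nesting of $(\Ci_t)_{t \in D}$ forces the corresponding sequence of interval-partitions $(I_t^D)_{t \in D}$ to be nested: the block structure only coarsens, and since adjacent-only merging holds for nested compositions, merging blocks correspond to merging adjacent interval components, so $I_s \subseteq I_t$ for $s \le t$ in $D$. One then defines $I_t := \bigcup_{s \in D, s < t}$ (closure-of-the-right-limit, i.e. the c\`adl\`ag regularization) for all $t \ge 0$; the c\`adl\`ag property of $(\Ci_t)$ together with \Cref{PR:empirical} (identifying $I_t$ as the Hausdorff limit of the empirical interval-partitions $I_t^n$, whose law is determined by $\Ci_t$) guarantees this extension is well-defined, c\`adl\`ag in $d_H$, and nested. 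Finally, on the same probability space I would introduce an independent i.i.d.\ uniform sequence $(X_i)_{i\ge1}$ and set $V_i = \xi_i$ when $\xi_i \notin I$ and $V_i = (r_k-\ell_k)X_i + \ell_k$ when $\xi_i = r_k$ is the right endpoint of the component $(\ell_k, r_k)$ of $I$, exactly as in the proof of \Cref{TH:gnedin}; then $(V_i)$ is i.i.d.\ uniform and independent of $(I_t)_{t\ge0}$, and by construction the ordered paintbox based on $(I_t)_{t \ge 0}$ with $(V_i)_{i\ge1}$ agrees with $(\Ci_t)_{t\ge0}$ on $D$, hence everywhere by right-continuity.

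Uniqueness in distribution follows as in the remark after \Cref{PR:empirical}: for each rational (or each) $t$, the interval-partition $I_t$ is recovered a.s.\ as the Hausdorff limit of the empirical interval-partitions $I_t^n$ built from $\Ci_t^n$, whose joint law over finitely many times is determined by the law of $(\Ci_t)_{t\ge0}$; passing to the limit determines the finite-dimensional distributions of $(I_t)_{t\ge0}$, and since it is c\`adl\`ag this fixes its law.

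The main obstacle I expect is the passage from the countable index set $D$ to all of $\R_+$ while preserving the c\`adl\`ag property in the Hausdorff-on-the-complement topology, and more subtly the measurability/joint-construction issue: one must produce the frequencies $(\xi_i)$ and the block structures $(\Ci_t)$ on a \emph{single} almost-sure event and verify that the induced interval-partition process is genuinely c\`adl\`ag — the regularization at a time $t \notin D$ could in principle differ from a naive limit if jumps accumulate, so one needs \Cref{PR:empirical} and the c\`adl\`ag hypothesis on $(\Ci_t)$ working together. The adjacent-only-merging structure of nested compositions is what makes the nesting of interval-partitions automatic and is the reason the definition of nested composition was set up that way; checking it carefully is routine but is the conceptual heart of why the statement is true.
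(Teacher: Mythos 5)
Your proposal follows essentially the same route as the paper: apply Gnedin's theorem at each time in a countable dense set, use the empirical interval-partitions and the adjacent-merging property of nested compositions to get nesting, pass to a c\`adl\`ag regularization, spread the time-zero frequencies over the atoms to produce the uniforms, and recover uniqueness from the empirical interval-partitions whose law is determined by $(\Ci_t)_{t \ge 0}$. The one step you assert rather than prove is that $(V_i)_{i \ge 1}$ is independent of the \emph{entire} process $(I_t)_{t \ge 0}$ and not merely of $I_0$; the paper establishes this via a vectorial version of de Finetti's theorem applied to the vectors $(\xi_i^{(0)}, \dots, \xi_i^{(p)})$ (see \Cref{app:detailsFinetti}), and your argument would need the same supplement.
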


\begin{proof}
    \textit{Existence.}
    For any $t \ge 0$, $\Ci_t$ is an exchangeable composition of $\N$. We can
    apply \Cref{TH:gnedin} distinctly for 
    $t \in \Q_+$ to find on the same probability space a collection of 
    interval-partitions $(I_t)_{t \in \Q_+}$ such that for any $t \in \Q_+$
    the ordered paintbox based on $I_t$ is $\Ci_t$. Let $I^n_t$ be the
    empirical interval-partition associated to $\Ci_t \cap [n]$. The fact
    that $(\Ci_t)_{t \ge 0}$ is a nested composition ensures that 
    $(I^n_t)_{t \in \Q_+}$ is a nested interval-partition. Taking the 
    limit as $n$ goes to infinity shows that $(I_t)_{t \in \Q_+}$ is also
    a nested interval-partition. It admits a unique c\`adl\`ag extension given by
    \[
        I_s = \interior(\bigcap_{\substack{t \ge s\\ t \in \Q_+}} I_t).
    \]

    Let $(V_i)_{i \ge 1}$ be the i.i.d.\ uniform sequence given by 
    \Cref{TH:gnedin} applied at time $t = 0$.
    To see that $(V_i)_{i \ge 1}$ is independent of $(I_t)_{t \ge 0}$,
    one can do the exact same steps as in the proof of \Cref{TH:gnedin} 
    but using a vectorial version of de Finetti's theorem (see
    \Cref{app:detailsFinetti}).

    We now show that for any $t \in \Q_+$, a.s.
    \begin{align} \label{eq:equivalenceSampling}
        i \sim_t j \iff \text{$V_i$ and $V_j$ are in the same interval of $I_t$}
    \end{align}
    where $\sim_t$ is the relation given by the blocks of $\Ci_t$. 
    
    Let $n \ge 1$ and divide the interval $[0, 1]$ in $n$ intervals of size
    $1/n$. We label the intervals from $1$ to $n$ in the same order as 
    the variables $V_1, \dots, V_n$. Let $t \in \Q_+$, the first step
    is to notice that the empirical interval-partition $I^n_t$ can be
    recovered by merging the blocks of size $1/n$ whose labels belong 
    to the same block of $\Ci_t$. Now, let $V^{(n)}_i$ (resp.\ $V^{(n)}_j$)
    be the right-hand extremity of the interval with label $i$
    (resp.\ $j$). Using twice the law of large numbers 
    shows that $V^{(n)}_i$ and $V^{(n)}_j$ converge to $V_i$ and $V_j$
    respectively. Moreover, we know that $I^n_t$ converges a.s.\ to $I_t$. 
    If we suppose that $V_i < V_j$ and $i \sim_t j$, then for any $n \ge 1$,
    $(V^{(n)}_i, V^{(n)}_j) \subset I^n_t$, and taking the limit shows that
    $(V_i, V_j) \subset I_t$. Conversely if $(V_i, V_j) \subset I_t$, using
    the convergence, for $n$ large enough we have $(V^{(n)}_i, V^{(n)}_j) \subset I^n_t$
    and thus $i$ and $j$ are in the same block of $\Ci_t$. 

    That relation~\eqref{eq:equivalenceSampling} holds a.s.\ for any $t \ge 0$
    will follow by right-continuity. However 
    we have to be careful, in general the nested composition obtained
    from an ordered paintbox is not c\`adl\`ag. By continuity, the relation~\eqref{eq:equivalenceSampling}
    only holds a.s.\ for all times $t$ when $(\Ci_t)_{t \ge 0}$ is continuous. 
    The original nested composition $(\Ci_t)_{t \ge 0}$ is recovered by considering
    a c\`adl\`ag modification of the nested composition obtained though an 
    ordered paintbox based on $(I_t)_{t \ge 0}$. 
    
    \medskip

    \textit{Uniqueness.} The uniqueness will come from the following
    convergence result
    \[
        \lim_{n \to \infty} \sup_{t \ge 0} d_H(I^n_t, I_t) = 0 \quad a.s.
    \]
    We start by showing the convergence. 
    Let $\epsilon > 0$, we can split $[0,1]$ into a finite number of pairwise disjoint intervals
    of length smaller than $\epsilon$ denoted by $J_1, \dots, J_p$. Given
    a combination of such intervals, $J = J_{i_1} \cup \dots \cup J_{i_k}$,
    let $f^n_J$ denote the fraction of variables $V_1, \dots, V_n$ which 
    belong to $J$. Then for any $\eta > 0$ using the law of large numbers
    we can a.s.\ find a large enough $N_J$ such that
    \[
        \forall n \ge N_J,\; \Abs{\Leb(J) - f^n_J} < \eta.
    \]
    Let $N$ be large enough such that this condition is fulfilled for
    all possible combinations of intervals. 
    
    We now show that a.s.
    \[
        \forall t \ge 0, \forall n \ge N,\; d_H(I^n_t, I_t) \le \eta + \epsilon.
    \]
    Let $x \not\in I_t$, and $J_x = \OpenInterval{\ell_x, r_x}$ be the
    interval such that $x \in J$ (in case $x$ is the boundary of two
    intervals, we choose the left interval). First suppose that $\ell_x = 0$
    or $r_x = 1$. By construction $0, 1 \not\in I^n_t$, thus $d(x, 0) <
    \epsilon$ or $d(x, 1) < \epsilon$. In the other case, the variables
    $(V_i)_{i \ge 1}$ which are in $[0, \ell_x]$ and those in $[r_x, 1]$ are
    not in the same interval component of $I_t$, and by construction of
    the paintbox, their labels are not in the same block of $\Ci_t$.  For
    $n \ge 1$, let $f^n_1$ (resp.\ $f^n_2$) denote the frequency of the
    variables $(V_i)_{i \le n}$ belonging to $[0, \ell_x]$ (resp.\ $[0,
    r_x]$). The previous remark shows that there is a point $y \in
    [f^n_1, f^n_2]$ which does not belong to $I^n_t$. For $n \ge N$ we
    know that $y \in [\ell_x - \eta, r_x + \eta]$ and thus $d(x, y) \le \eta
    + \epsilon$. This shows
    \[
        \forall t \ge 0, \forall n \ge N,\; \sup_{x \not\in I_t} 
        d(x, [0, 1] \setminus I^n_t) \le \eta + \epsilon.
    \]

    Similarly consider $x_n \not\in I^n_t$. If $x_n \in \Set{0, 1}$, 
    clearly $d(x_n, [0, 1]\setminus I_t) = 0$. In the other case the
    point $x_n$ is the separation between two
    intervals of $I^n_t$. These two intervals can be seen as an
    agglomeration of blocks of size $1/n$ whose labels belong to the same
    block of $I_t$.  Let $i$ (resp.\ $j$) be the label of the right-most
    (resp.\ left-most) block of size $1/n$ of  the left interval 
    (resp.\ right interval) separated by $x_n$. The rules of the paintbox
    construction imply that $V_i$ and $V_j$ are not in the same interval
    of $I_t$, thus there exists $V_i \le y_n \le V_j$ such that $y_n
    \not\in I_t$. The value of $x_n$ is exactly the frequency of
    variables $V_1, \dots V_n$ which belong to $[0, y_n]$. Let $J_{y_n} =
    \OpenInterval{\ell_{y_n}, r_{y_n}}$ be the interval to which $y_n$
    belongs, and $f^n_1$, $f^n_2$ be as above the frequency of the $n$
    first variables in $[0, \ell_{y_n}]$ and $[0, r_{y_n}]$. As $\ell_{y_n} \le
    y_n$, we know that $f^n_1 \le x_n$, and similarly $x_n \le f^n_2$.
    Thus for $n \ge N$, $x_n \in [\ell_{y_n} - \eta, r_{y_n} + \eta]$ and
    $d(x_n, y_n) \le \eta + \epsilon$.  This shows
    \[
        \forall t \ge 0, \forall n \ge N,\; 
        \sup_{x \not\in I^n_t} d(x, [0, 1] \setminus I_t) \le \eta + \epsilon.
    \]
    Thus, a.s.\ $(I^n_t)_{t \ge 0}$ converges uniformly to $(I_t)_{t \ge 0}$.

    \medskip

    To get uniqueness, it is sufficient to notice that the distribution of the sequence
    $((I^n_t)_{t \ge 0};\, n \ge 1)$ is determined uniquely by that of
    $(\Ci_t)_{t \ge 0}$. As we can recover a.s.\ $(I_t)_{t \ge 0}$ from
    $((I^n_t)_{t \ge 0};\, n \ge 1)$,
    the distribution of $(I_t)_{t \ge 0}$ is also determined by that of $(\Ci_t)_{t \ge 0}$.
\end{proof}

\begin{remark}
    This also proves \Cref{PR:empirical} in a more detailed way. 
\end{remark}

    \subsection{Uniform nested compositions, proof of Theorem~\ref{TH:coalescentGeneral}}
    \label{SS:coalescent}

We recall that $\If$ stands for the quotient space of combs for the paintbox-equivalence
relation. To be entirely rigorous we need to define a suitable $\sigma$-field on $\If$. 
By definition of $\If$ a paintbox based on any of the representatives of a class yields
the same distribution on the space of coalescents. We can identify each
class with this distribution and endow $\If$
with the weak convergence topology of probability measures on the 
space of coalescents. We consider the associated Borel $\sigma$-field. 
This approach bears similarity with the Gromov-weak topology introduced
in \cite{greven_convergence_2006}, more on this can be found in 
\Cref{S:ultrametric}. 

The first step to find a comb representation of a given exchangeable coalescent
$(\Pi_t)_{t \ge 0}$ is to order the blocks of $(\Pi_t)_{t \ge 0}$ to
obtain a nested composition. We will do that using the notion of uniform
nested composition that we now introduce. 

\begin{definition}
    Let $(\Ci_t)_{t \ge 0}$ be an exchangeable nested composition of $\N$ and
    $(\Pi_t)_{t \ge 0}$ be the associated coalescent. We say that $(\Ci_t)_{t \ge 0}$
    is uniform if for any $n \ge 1$, conditionally on $(\Pi^n_t)_{t \ge 0}$,
    the order of the blocks of $(\Ci^n_t)_{t \ge 0}$ is uniform among all the
    possible orderings, i.e.\ all the orderings such that $(\Ci^n_t)_{t \ge 0}$
    is a nested composition.
\end{definition}

The following lemma shows that any exchangeable coalescent can be 
turned into a uniform exchangeable nested composition.

\begin{lemma} \label{lem:uniformComposition}
    Let $(\Pi_t)_{t \ge 0}$ be an exchangeable coalescent. There exists
    a uniform exchangeable nested composition $(\Ci_t)_{t \ge 0}$ whose
    associated coalescent is $(\Pi_t)_{t \ge 0}$.
\end{lemma}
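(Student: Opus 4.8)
The plan is to construct $(\Ci_t)_{t\ge 0}$ from $(\Pi_t)_{t\ge 0}$ by choosing, for each merger event, a uniformly random way to place the newly created block among the existing ones so that adjacency is respected, and then to take a projective limit over $n$. Concretely, I would first work at the level of a fixed $n$. Given the coalescent $(\Pi^n_t)_{t\ge 0}$ restricted to $[n]$, I want to build a nested composition $(\Ci^n_t)_{t\ge 0}$ on $[n]$ refining it. The key combinatorial observation is that a nested composition on $[n]$ with underlying coalescent $(\Pi^n_t)_{t\ge 0}$ is exactly the data of a total order on the blocks at each time, compatible across times in the sense that merging blocks must be adjacent; equivalently, it is a choice of a linear order on the leaves $[n]$ such that, at every time $t$, each block of $\Pi^n_t$ is an interval for that order (a so-called "consecutive arrangement" of the hierarchy, as in the PQ-tree / laminar-family picture). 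I would let $(\Ci^n_t)_{t\ge 0}$ be obtained by drawing such a compatible linear order \emph{uniformly at random} among all of them, independently of nothing else, i.e.\ from the conditional law given $(\Pi^n_t)_{t\ge 0}$; this is well defined since for fixed $n$ there are only finitely many such orders (and at least one, obtained by a depth-first traversal of the coalescent tree). By construction $(\Ci^n_t)_{t\ge 0}$ is uniform in the sense of the definition.

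The second step is consistency: I must arrange the family $\bigl((\Ci^n_t)_{t\ge 0}\bigr)_{n\ge1}$ so that $\Ci^n$ is the restriction of $\Ci^{n+1}$ to $[n]$, so that a nested composition $(\Ci_t)_{t\ge0}$ of $\N$ exists by Kolmogorov extension on the (Polish) space of nested compositions. For this I would not sample the $\Ci^n$ independently but build them sequentially: having drawn the compatible order on $[n]$, insert leaf $n+1$ into the order at a position chosen uniformly among the positions that keep every block of $\Pi^{n+1}_t$ an interval at every $t$ — i.e.\ among the "slots" inside the smallest block of the hierarchy on $[n+1]$ that would contain $n+1$ once it has coalesced, at the finest resolution. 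One must check that this sequential insertion rule produces, at each stage $n$, exactly the uniform distribution on compatible orders of $[n]$ (a direct counting argument: the number of compatible orders factorizes along the tree, and inserting a new leaf multiplies the count by the size of the relevant slot set, with the insertion chosen uniformly there), and that the restriction of $\Ci^{n+1}$ to $[n]$ recovers $\Ci^n$ (clear, since deleting leaf $n+1$ from the order restores the order on $[n]$ and deleting it from each block of $\Pi^{n+1}_t$ gives the corresponding block of $\Pi^n_t$). This yields a consistent family, hence $(\Ci_t)_{t\ge0}$.

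The third step is to verify the three required properties of $(\Ci_t)_{t\ge0}$: that it is a nested composition of $\N$ (only adjacent blocks merge), that its associated coalescent is $(\Pi_t)_{t\ge0}$, and that it is exchangeable. The first is immediate from the compatibility built into each $\Ci^n$ and passes to the limit; the second holds because forgetting the order in $\Ci^n_t$ returns $\Pi^n_t$ for every $n$ and $t$, hence returns $\Pi_t$. For exchangeability, I would argue as follows: $(\Pi_t)_{t\ge0}$ is exchangeable, and for a finite permutation $\sigma$ the pushed-forward composition $\sigma(\Ci_t)$ has underlying coalescent $\sigma(\Pi_t)\overset{d}{=}(\Pi_t)$; moreover the conditional law of $(\Ci_t)_{t\ge0}$ given its own coalescent is, by construction, the uniform law on compatible orders, which is manifestly invariant under relabelling of the leaves. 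Since a process whose law is determined by (i) the law of its underlying coalescent and (ii) a relabelling-invariant conditional kernel is exchangeable as soon as the coalescent is, exchangeability of $(\Ci_t)_{t\ge0}$ follows. I expect the main obstacle to be the bookkeeping in step two: proving rigorously that the sequential leaf-insertion rule yields the \emph{uniform} distribution on compatible orders at every finite $n$ and is consistent under restriction — this is where the laminar/tree structure of the coalescent must be used carefully, e.g.\ by an explicit recursive count of compatible orders (a product over internal vertices of multinomial-type factors) and a matching recursion for the insertion probabilities.
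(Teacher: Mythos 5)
Your proposal is correct and follows essentially the same route as the paper: the paper also builds $(\Ci^n_t)_{t\ge 0}$ by induction on $n$, inserting the singleton $\Set{n+1}$ uniformly among the finitely many positions consistent with the genealogy (there are $k$ of them when $\Set{n+1}$ first merges in a $k$-block coalescence event), which gives consistency for free, and it verifies exchangeability via exactly the relabelling-invariance of the uniform conditional law that you describe (written out there as a ratio $O(c_1,\dots,c_p;\Pi^n)/O(\Pi^n)$ of counts of consistent orderings). The one point you flag as the main obstacle --- that sequential uniform insertion yields the uniform law on all compatible orders, via the factorization of the count along the tree --- is precisely the step the paper dismisses as ``easily checked,'' and your sketch of it is sound.
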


\begin{proof}
    We proceed by induction. 
    For $n = 1$ there is a unique trivial possible order on
    the blocks. Suppose that we have built for $n$ an order on the
    blocks of $(\Pi^n_t)_{t \ge 0}$ such that only adjacent blocks
    can merge, we call such an order an order \emph{consistent with 
    the genealogy}. Then there are finitely many orders on the blocks
    of $(\Pi^{n+1}_t)_{t \ge 0}$ that extend the previous order
    and are consistent with the genealogy. More precisely, if 
    $n+1$ is in a block of $\Pi^{n+1}_0$ the extension is unique.
    If $n+1$ is a singleton of $\Pi^{n+1}_0$, suppose that $\Set{n+1}$
    coalesce at some point and that $k$ blocks are involved in this
    coalescence event. Then there are $k$ consistent extensions:
    $\Set{n+1}$ can be placed between any of the $k-1$ other blocks,
    or at the left-most (resp.\ right-most) position. If $\Set{n+1}$
    does not coalesce, the singleton can be placed at any position
    between blocks that do not coalesce. We pick one of these orders
    independently and uniformly.

    By induction, we have built on the same probability space as
    $(\Pi_t)_{t \ge 0}$ a nested composition of $\N$ whose blocks merge
    according to $(\Pi_t)_{t \ge 0}$.  It is easily checked from the
    construction that $(\Ci_t)_{t \ge 0}$ is a uniform nested
    composition. 
    It remains to show that it is exchangeable. Fix 
    $0 \le t_1 < \dots < t_p$, and let $c_1, \dots, c_p$ be 
    compositions of $[n]$, whose block partitions are 
    $\pi_1, \dots, \pi_n$ respectively. Fix some trajectory
    $\Pi^n \defas (\Pi^n_t)_{t \ge 0}$ of the coalescent.
    Let us denote by $O(\Pi^n)$ the number of orderings of the
    blocks of $\Pi^n_0$ yielding a nested composition, and 
    let $O(c_1, \dots, c_p; \Pi^n)$ be the number of such orderings
    verifying that $\Ci^n_{t_i} = c_i$, for $i \in \Set{1, \dots, p}$.
    Then for any permutation $\sigma$ of $[n]$, the following
    direct calculation
    \begin{align*}
        &\Prob{\Ci^n_{t_1} = c_1, \dots, \Ci^n_{t_p} = c_p}
        = \E\Big[\frac{O(c_1, \dots, c_p; \Pi^n)}{O(\Pi^n)}
        \Indic{\Pi^n_{t_1} = \pi_1, \dots, \Pi^n_{t_p} = \pi_p}\Big]\\
        &= \E\Big[\frac{O(c_1, \dots, c_p; \sigma(\Pi^n))}{O(\sigma(\Pi^n))}
        \Indic{\sigma(\Pi^n_{t_1}) = \pi_1, \dots, \sigma(\Pi^n_{t_p}) = \pi_p}\Big]\\
        &= \E\Big[\frac{O(\sigma^{-1}(c_1), \dots, \sigma^{-1}(c_p); \Pi^n)}{O(\Pi^n)}
        \Indic{\Pi^n_{t_1} = \sigma^{-1}(\pi_1), \dots, \Pi^n_{t_p} = \sigma^{-1}(\pi_p)}\Big]\\
        &= \Prob{\Ci^n_{t_1} = \sigma^{-1}(c_1), \dots, \Ci^n_{t_p} = \sigma^{-1}(c_p)}
    \end{align*}
    proves that the nested composition is exchangeable.
\end{proof}

\begin{proof}[Proof of \Cref{TH:coalescentGeneral}]
    Let $(\Pi_t)_{t \ge 0}$ be an exchangeable coalescent. Let
    $(\Ci_t)_{t \ge 0}$ be the uniform nested compositions obtained
    through \Cref{lem:uniformComposition}.  Invoking
    \Cref{TH:gnedinNested} shows that there exists a comb representation
    $(I_t)_{t \ge 0}$ of $(\Pi_t)_{t \ge 0}$. The uniqueness is immediate
    from the definition of the quotient.
\end{proof}

\section{Comb representation of \texorpdfstring{$\Lambda$}{Lambda}-coalescents}
\label{S:lambda}

In this section, we restrict our attention to the well-studied case
of $\Lambda$-coalescents. A process $(\Pi_t)_{t \ge 0}$ is a $\Lambda$-coalescent
if for any $n \ge 1$, its restriction $(\Pi^n_t)_{t \ge 0}$ to $[n]$
is a Markov process such that starting from a partition with $b$
blocks, any $k$ blocks coalesce at rate
\[
    \lambda_{b, k} = \int_{[0, 1]} x^{k-2} (1 - x)^{b-k} \Lambda(\mathrm{d}x)
\]
for a finite measure $\Lambda$ on $[0, 1]$. 

The broad aim of this section is to find a Markovian comb representation
of a given $\Lambda$-coalescent, and to provide its transitions. Recall
from the last section the path followed to obtain a comb associated to an
exchangeable coalescent. The first step is to order the block of the
coalescent to get a nested composition, and then to use
\Cref{TH:gnedinNested} to define a comb.  Here we will follow this
path in the special case of $\Lambda$-coalescents where we can have an
explicit description of both the nested composition and the comb. 

\medskip

Let us first define the nested composition associated to a $\Lambda$-coalescent.
Consider the modified transition rates
\[
    \tl_{b,k} = \frac{1}{b-k+1} \binom{b}{k} \lambda_{b,k}.
\]
Let $n \ge 1$, we define a Markov chain $(\Ci^n_t)_{t \ge 0}$ taking values
in the space of composition of $[n]$ as follows. Starting from $c$, a composition
of $[n]$ with $b$ blocks, any $k$ adjacent blocks merge at rate 
$\tl_{b,k}$. These transition rates have a natural combinatorial interpretation.
Consider $(\Pi^n_t)_{t \ge 0}$ the restriction to $[n]$ of 
a $\Lambda$-coalescent. Starting from a partition with $b$ blocks,
there are $\binom{b}{k}$ ways of merging $k$ distinct blocks. 
Thus the total transition rate from $b$ to $b-k+1$ blocks is
$\binom{b}{k} \lambda_{b,k}$.
Given that $k$ blocks merge, the blocks that merge are chosen uniformly
among the $\binom{b}{k}$ possible choices. Starting from a composition
with $b$ blocks, there are only $b-k+1$ ways to merge $k$ adjacent 
blocks. Thus, the total transition rate of $(\Ci^n_t)_{t \ge 0}$ from
$b$ to $b-k+1$ blocks is the same as $(\Pi^n_t)_{t \ge 0}$, but instead
of choosing uniformly $k$ blocks among the $\binom{b}{k}$ possibilities, we
choose $k$ \emph{adjacent} blocks among the $b-k+1$ possibilities. 

We now extend this sequence of nested compositions to a 
nested composition of $\N$. To fully determine the distribution
of $(\Ci^n_t)_{t \ge 0}$ we have to specify an initial distribution. 
We will always assume in this section that the process $(\Ci^n_t)_{t \ge 0}$
starts from the composition of $[n]$ composed of only singletons ordered
uniformly. Using the Markov projection theorem (see e.g.
\cite{kemedy_finite_1960}, Section 6.3), it is not hard to see that the 
sequence of processes $((\Ci^n_t)_{t \ge 0};\, n \ge 1)$ is sampling
consistent, i.e.\ that the restriction of $(\Ci^{n+1}_t)_{t \ge 0}$
to $[n]$ is distributed as $(\Ci^n_t)_{t \ge 0}$. Using the Kolmogorov
extension theorem we can find $(\Ci_t)_{t \ge 0}$ an exchangeable nested composition
of $\N$ whose projections to $[n]$ is distributed as $(\Ci^n_t)_{t \ge 0}$
for all $n \ge 1$. The process $(\Ci_t)_{t \ge 0}$ is a nested composition
whose blocks merge according to a $\Lambda$-coalescent. 

\begin{lemma} \label{lem:lambdaComposition}
    Let $(\Pi_t)_{t \ge 0}$ be the coalescent associated to $(\Ci_t)_{t \ge 0}$.
    Then $(\Pi_t)_{t \ge 0}$ is a $\Lambda$-coalescent. Moreover for any $t \ge 0$,
    conditionally on $\Pi^n_t$, the composition $\Ci^n_t$ is obtained by
    ordering uniformly the blocks of $\Pi^n_t$. 
\end{lemma}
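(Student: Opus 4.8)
The plan is to establish the two claims separately, starting with the second (the uniform-ordering statement) since it is essentially built into the construction, and then deducing the first (that $(\Pi_t)_{t\ge 0}$ is a $\Lambda$-coalescent) from it. For the second claim, I would argue by induction on $n$, or rather directly: fix $n$ and condition on the trajectory $(\Pi^n_t)_{t\ge 0}$. The construction of $(\Ci^n_t)_{t\ge 0}$ was: start from all singletons ordered uniformly, and when a merger of $k$ blocks occurs in $\Pi^n$ at some time, choose $k$ \emph{adjacent} blocks to merge uniformly among the $b-k+1$ possibilities (where $b$ is the current number of blocks). I want to show that, conditionally on $(\Pi^n_t)_{t\ge 0}$, the composition $\Ci^n_t$ is a uniform ordering of the blocks of $\Pi^n_t$. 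The key combinatorial fact is that if we have a uniform random ordering of $b$ objects and we are given an unordered subset $S$ of size $k$, then conditionally on the $k$ chosen adjacent blocks forming exactly the set $S$ (in the merged picture), the resulting ordering of the $b-k+1$ blocks is again uniform; moreover, the probability that a uniform ordering of $b$ objects places the elements of a given $k$-subset consecutively equals $(b-k+1)k!(b-k)!/b!$, which is proportional to the number of ways the merger could have been realized. Carefully bookkeeping this shows that conditioning on the \emph{unordered} merger data $(\Pi^n_t)$ leaves the ordering uniform at every jump time, hence at every time.

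For the first claim, I would observe that $(\Pi_t)_{t\ge 0}$ is obtained from $(\Ci_t)_{t\ge 0}$ by forgetting the order, and check that its restriction $(\Pi^n_t)_{t\ge 0}$ to $[n]$ is a Markov chain with the correct rates $\lambda_{b,k}$. Since $(\Ci^n_t)_{t\ge 0}$ is a continuous-time Markov chain and, by the uniform-ordering claim just established, the conditional law of $\Ci^n_t$ given $\Pi^n_t$ depends only on $\Pi^n_t$ (it is the uniform ordering), the Markov projection criterion (already invoked in the text, \cite{kemedy_finite_1960}, Section 6.3) applies and shows $(\Pi^n_t)_{t\ge 0}$ is itself Markov. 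To compute its rates: from a composition $c$ with $b$ blocks, the total rate at which some specified set of $k$ blocks merges is the rate at which those $k$ blocks happen to be adjacent \emph{and} get chosen; summing the jump rates $\tl_{b,k}$ over the $b-k+1$ adjacent $k$-tuples that would produce the given unordered $k$-subset — averaged against the uniform ordering — recovers exactly $\binom{b}{k}\lambda_{b,k}$ total rate with uniform choice of the $k$ merging blocks, i.e.\ each particular $k$-merger happens at rate $\lambda_{b,k}$. Concretely, one uses the identity $\tl_{b,k}=\frac{1}{b-k+1}\binom{b}{k}\lambda_{b,k}$ from the definition: a fixed unordered $k$-subset is adjacent in a uniform ordering of $b$ blocks with the right frequency, and multiplying by $\tl_{b,k}$ and by the number of orderings yields $\lambda_{b,k}$.

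The main obstacle I anticipate is the bookkeeping in the uniform-ordering induction: one must verify that the operation ``merge $k$ uniformly chosen adjacent blocks'' on a uniform ordering of $b$ blocks, \emph{conditioned on which unordered $k$-subset got merged}, produces a uniform ordering of the $b-k+1$ resulting blocks — and that this conditioning is consistent with the jump mechanism of $\Pi^n$. This requires writing the joint law of (uniform ordering of $b$ blocks, choice of adjacent $k$-tuple) and checking that after marginalizing over orderings compatible with a given unordered merger, uniformity is preserved; it is a finite but slightly delicate symmetry argument. Everything else — sampling consistency (already asserted via Markov projection), càdlàg regularity, passing from $[n]$ to $\N$ — is routine given the earlier results in the paper. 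I would also note that the uniform-ordering property, once established at all jump times, extends to all $t\ge 0$ simply because between jumps neither $\Pi^n$ nor $\Ci^n$ changes.
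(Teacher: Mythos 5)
Your route is genuinely different from the paper's. The paper runs no pathwise induction at all: it invokes the Markov-function (intertwining) theorem of Rogers and Pitman, taking the kernel $L_n$ to be ``order the blocks of $\pi$ uniformly'' and verifying the single algebraic identity $\hat{Q}_n L_n f = L_n Q_n f$ by matching the coefficient of $f(c')$ on both sides (this is Appendix C; the key cancellation is $\frac{k!}{b!}\tilde{\lambda}_{b,k} = \frac{\lambda_{b,k}}{(b-k+1)!}$). That theorem then delivers both conclusions at once: $(\Pi^n_t)_{t\ge 0}$ is Markov with the $\Lambda$-coalescent generator, and the conditional law of $\Ci^n_t$ given the past of $\Pi^n$ is $L_n(\Pi^n_t,\cdot)$. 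Your forward induction on jump times is the ``filtering'' proof of the same intertwining. Your combinatorial inputs are correct: a uniform ordering of $b$ blocks makes a fixed $k$-subset consecutive with probability $k!(b-k+1)!/b!$; conditionally on that event the induced ordering of the $b-k+1$ super-blocks is uniform (each such ordering arises from exactly $k!$ of the original ones); and multiplying by $\tilde{\lambda}_{b,k}$ recovers the rate $\lambda_{b,k}$ per unordered $k$-subset. The paper's route is shorter because the intertwining is checked once, statically; yours is probabilistically more transparent but requires the interleaved bookkeeping you rightly flag as the delicate point.

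One statement in your plan is false as written and must be repaired: it is \emph{not} true that conditionally on the whole trajectory $(\Pi^n_s)_{s \ge 0}$ the composition $\Ci^n_t$ is a uniform ordering of the blocks of $\Pi^n_t$. Conditioning on the future constrains the present order, since the blocks involved in the next merger after time $t$ must already be adjacent in $\Ci^n_t$. For $n=3$: given that $\{1\}$ and $\{2\}$ are the first pair to merge, the initial ordering is uniform over the four orderings in which $1$ and $2$ are adjacent, not over all six. The correct statement --- and the one your induction actually establishes if at each jump you condition only on $\sigma(\Pi^n_s,\, s \le t)$ --- is uniformity of the order given the \emph{past} of $\Pi^n$; this implies the lemma's claim (conditioning on $\Pi^n_t$ alone) by the tower property. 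With that correction, the interleaved induction (uniformity given the past $\Rightarrow$ each unordered $k$-subset merges at conditional rate $\lambda_{b,k}$ $\Rightarrow$ uniformity is preserved across the merger) goes through, and the Markov property of $(\Pi^n_t)_{t\ge 0}$ with the stated rates follows as you describe.
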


\begin{proof}
    Let $(\Ci^n_t)_{t \ge 0}$ and $(\Pi^n_t)_{t \ge 0}$ be the restriction to
    $[n]$ of $(\Ci_t)_{t \ge 0}$ and $(\Pi_t)_{t \ge 0}$ respectively. Let $Q_n$
    be the generator of $(\Ci^n_t)_{t \ge 0}$ and $\hat{Q}_n$ be the generator
    of a $\Lambda$-coalescent on $[n]$. The result will follow by using
    a Markov projection theorem from~\cite{rogers1981}, see their
    Theorem~2. To apply this 
    result, we need to find a probability kernel
    $L_n$ from the space of partitions of $[n]$ to the space of compositions of $[n]$
    such that for any function $f$ from the space of partitions
    of $[n]$ to $\R$, 
    \[
        \forall \pi,\; \hat{Q}_n L_nf(\pi) = L_nQ_nf(\pi)
    \]
    and such that the initial distribution of $(\Ci^n_t)_{t \ge 0}$ is the push-forward
    by $L_n$ of the initial distribution of $(\Pi^n_t)_{t \ge 0}$. 

    Let $f$ be such a function. For $\pi$ a partition of $[n]$, let $\Ci_\pi$
    be the random composition of $[n]$ obtained by ordering the blocks
    of $\pi$ uniformly. We set
    \[
        \forall \pi,\; L_nf(\pi) = \E[f(\Ci_\pi)].
    \]
    Our choice of initial distribution for $(\Ci_t)_{t \ge 0}$ ensures that 
    the second condition holds. A straightforward generator calculation shows
    that the above equality is fulfilled and that the desired result holds.
    See \Cref{app:generator} for the details of the calculation.
\end{proof}

Using \Cref{TH:gnedinNested}, the nested composition $(\Ci_t)_{t
\ge 0}$ defines a unique nested interval-partition $(I_t)_{t \ge 0}$ that
we call the \emph{$\Lambda$-comb}. In the remainder of the section we
want to show that the $\Lambda$-comb is a Markov process and give its
transitions. We will express the transitions in terms of composition
of bridges that we now introduce.

\medskip

We say that a function $B \colon [0, 1] \to [0, 1]$ is a bridge if it is
of the form
\[
    B(x) = x(1 - \sum_{i \ge 1} \beta_i) + \sum_{i \ge 1} \beta_i \Indic{x \le V_i} 
\]
for a random mass-partition $\beta$ and an independent i.i.d.\ sequence
$(V_i)_{i \ge 1}$ of uniform $[0, 1]$-valued variables. To any
bridge we associate an interval-partition defined as 
\[
    I(B) = \interior\Big([0,1] \setminus B([0, 1]) \Big)
\]
where $B([0, 1])$ is the range of $B$. We can ask if the converse holds.
The correct notion to answer this question is that of uniform order.

\begin{definition}
    Let $I$ be a random interval-partition and $\Ci$ be the composition of $\N$
    obtained through an ordered paintbox based on $I$. We say that 
    $I$ has a uniform order if for any $n \ge 1$, the order of the 
    blocks of $\Ci \cap [n]$ is uniform. 
\end{definition}

The following lemma shows that having a uniform order is a necessary and
sufficient condition for an interval-partition to be represented by a bridge.
See \Cref{SS:proofLemmaUniform} for a proof.

\begin{lemma} \label{lem:uniformOrder}
    Let $I$ be a random interval-partition. There exists a bridge $B$
    such that $I(B) = I$ \tiff $I$ has a uniform order. If $I$ has a
    uniform order, the bridge $B$ such that $I(B) = I$ is unique in
    distribution.
\end{lemma}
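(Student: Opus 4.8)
The plan is to establish the three assertions in turn, with the substance lying in the implication ``$I=I(B)$ for some bridge $B$ $\Rightarrow$ $I$ has a uniform order''. Throughout, write $s=\sum_i\beta_i$ for the total mass of the mass-partition $\beta$ underlying a bridge $B$, and record the elementary structural facts about $I(B)$: its interval components are exactly the ``gaps'' of $B$ (open intervals of respective lengths $\beta_i$, sitting at the jumps $V_i$ of $B$), so that the mass-partition of $I(B)$ is a.s.\ $\beta$; and $[0,1]\setminus I(B)$ coincides, up to a countable set, with the range of $B$, of Lebesgue measure $1-s$, which is the ``dust'' of $I(B)$ — a point thrown there forms a singleton in a paintbox.

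For the direction bridge $\Rightarrow$ uniform order, let $B$ be a bridge, independent of an i.i.d.\ uniform sequence $(W_k)_{k\ge1}$, and let $\Ci$ be the resulting ordered paintbox composition. Let $g=B^{-1}$ be the right-continuous generalized inverse and set $Z_k=g(W_k)$. The first step is to check that, a.s., $W_k$ and $W_{k'}$ lie in the same interval component of $I(B)$ \tiff $Z_k=Z_{k'}$, and that the left-to-right order of the interval components of $I(B)$ matches the order of the corresponding values of $g$; consequently $\Ci^n$ is precisely the composition of $[n]$ read off $(Z_1,\dots,Z_n)$ by grouping equal values and ordering the resulting blocks by their common value. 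The second — and key — step is that, conditionally on $\beta$, the sequence $(Z_k)_{k\ge1}$ is i.i.d.\ with common law $\mu=\sum_i\beta_i\,\delta_{V_i}+(1-s)\,\Leb$, where the atom locations $(V_i)$ are i.i.d.\ uniform and independent of $\beta$; this follows from pushing the Lebesgue measure on $[0,1]$ forward through $g$. Granting this, one samples $(Z_1,\dots,Z_n)$ conditionally on $\beta$ by first drawing i.i.d.\ labels $L_k$ (equal to $0$ with probability $1-s$ and to $i$ with probability $\beta_i$) and then setting $Z_k=V_{L_k}$ if $L_k\ge1$ and $Z_k$ a fresh independent uniform if $L_k=0$. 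The partition of $\Ci^n$ is a deterministic function of $(L_1,\dots,L_n)$, whereas, conditionally on $(L_1,\dots,L_n)$, the collection of block-values is a family of pairwise distinct i.i.d.\ uniform variables. Hence, conditionally on $(L_1,\dots,L_n)$ — a fortiori conditionally on the partition of $\Ci^n$ — the order of the blocks is a uniformly random permutation, which is exactly the statement that $I(B)$ has a uniform order.

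For the converse, recall that by the uniqueness part of Gnedin's theorem (see the remark following \Cref{PR:empirical}, or the proof of \Cref{TH:gnedinNested}) the law of a random interval-partition is determined by the law of the composition produced by its ordered paintbox, while the unordered partition of that composition is the Kingman paintbox on the mass-partition of the interval-partition and hence depends in law only on that mass-partition. Now let $I$ have a uniform order, let $\beta$ be its mass-partition, and build a bridge $B$ out of $\beta$ and an independent i.i.d.\ uniform sequence, so that $I(B)$ has mass-partition $\beta$. Both $I$ and $I(B)$ have a uniform order — the latter by the first part — so their ordered paintbox compositions have the same law (same partition law, to which one appends an independent uniform order); hence $I\overset{\text{(d)}}{=}I(B)$. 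A standard transfer argument then lets one realise the sequence $(V_i)$ on the original probability space so that $I(B)=I$ a.s.

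Uniqueness in law is then immediate: any bridge $B$ with $I(B)=I$ has mass-partition equal to that of $I$, so the law of $\beta$ is pinned down by that of $I$; and since a bridge is a fixed deterministic function of its mass-partition and an independent i.i.d.\ uniform sequence, the law of $B$ is determined by the law of $\beta$. The main obstacle is the combination, in the first part, of the push-forward identity $g_*\Leb=\mu$ with the observation that the partition of $\Ci^n$ is $\sigma(L_1,\dots,L_n)$-measurable while the block-values are conditionally i.i.d.\ uniform given $(L_1,\dots,L_n)$: this is where the defining features of a bridge — i.i.d.\ uniform atom locations, independent of the masses — are genuinely used, and it is what forces the conditional law of the block order to be uniform. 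Verifying the first step (that the ordered paintbox on $I(B)$ is read off the values $g(W_k)$ in the correct order) is routine but calls for care with the generalized inverse and with the dust.
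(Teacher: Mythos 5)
Your proof is correct, and it differs from the paper's in instructive ways. For the implication ``bridge $\Rightarrow$ uniform order'' you start as the paper does (read the block order off the values of the generalized inverse at the sample points), but you then supply the probabilistic substance that the paper compresses into the single sentence ``conditionally on the bridge these variables are i.i.d.\ and thus their order is uniform'': for a \emph{fixed} atomic law the order of the tied groups in an i.i.d.\ sample is not uniform conditionally on the induced partition (heavier atoms are biased towards their fixed positions), so one genuinely has to integrate out the atom locations, using that they are i.i.d.\ uniform and independent of the masses --- precisely your labels-plus-block-values decomposition. For the converse and for uniqueness your route is genuinely different. The paper first treats interval-partitions with finitely many components and no dust by an explicit construction (a uniform permutation putting the lengths in nonincreasing order), and then passes to the limit along empirical interval-partitions using the continuity of $B\mapsto I(B)$ (\Cref{lem:continuity}) together with Lemma~1 of \cite{bertoin_stochastic_2003}. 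You instead invoke the injectivity, at the level of laws, of the map from random interval-partitions to ordered-paintbox compositions (the uniqueness statement surrounding \Cref{PR:empirical}, established for arbitrary random interval-partitions inside the uniqueness part of the proof of \Cref{TH:gnedinNested}), and observe that $I$ and $I(B)$ yield compositions with the same law because they share the same partition law (Kingman paintbox on a common mass-partition law) and both have a uniform conditional order. This bypasses the approximation argument entirely. Your uniqueness argument --- the law of a bridge is a functional of the law of its mass-partition, which is read off $I$ --- is likewise more direct than extracting uniqueness from the limiting construction. As in the paper, both existence and uniqueness are obtained in distribution only, with a transfer argument upgrading to almost-sure equality, so nothing is lost there.
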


Notice that for any $t \ge 0$, the $\Lambda$-comb $I_t$ at time $t$ has a
uniform order. We will denote by $B^{I_t}$ the bridge associated to $I_t$
through \Cref{lem:uniformOrder}. We are now in position to provide
the transitions of the $\Lambda$-comb.

\begin{proposition} \label{prop:lambdaGroup}
    Let $(I_t)_{t \ge 0}$ be the $\Lambda$-comb. The process $(I_t)_{t \ge 0}$
    is Markovian, and for any $s, t \ge 0$, conditionally on $I_t$,
    \begin{align} \label{eq:semigroupComb}
        I_{t+s} \overset{(\mathrm{d})}{=} I(B^{I_t} \circ B'_s)
    \end{align}
    where $B'_s$ is an independent bridge distributed as $B^{I_s}$. 
\end{proposition}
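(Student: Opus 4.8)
The plan is to establish the Markov property and the transition formula \eqref{eq:semigroupComb} simultaneously at the level of finite restrictions $(\Ci^n_t)_{t \ge 0}$, and then pass to the limit using the uniform convergence of empirical interval-partitions from \Cref{TH:gnedinNested}. The key observation is that the composition of bridges $B^{I_t} \circ B'_s$ should encode exactly the two-step merging: first the merging described by $(\Ci^n_s)_{s' \le s}$ applied to singletons, then the merging described by $(\Ci^n_t)$ applied to the resulting blocks. So the heart of the matter is a combinatorial identity: if $\Ci^n$ is a composition of $[n]$ obtained by a uniform-order ordered paintbox, and we further merge adjacent blocks of $\Ci^n$ according to an independent composition of its blocks (again via a uniform-order paintbox), the resulting composition has the same law as the ordered paintbox based on $I(B^{I_t} \circ B'_s)$.

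First I would recall from \Cref{lem:lambdaComposition} that conditionally on $\Pi^n_t$, the composition $\Ci^n_t$ is a uniform ordering of the blocks of $\Pi^n_t$, so in particular $I_t$ has a uniform order and $B^{I_t}$ is well-defined by \Cref{lem:uniformOrder}. Next I would use the Markov property of the finite-state chain $(\Ci^n_t)_{t \ge 0}$ — which holds by construction, since it is defined as a continuous-time Markov chain with the modified rates $\tl_{b,k}$ — to write, conditionally on $\Ci^n_t$ having block-composition structure with say $b$ blocks, that $\Ci^n_{t+s}$ is obtained from $\Ci^n_t$ by running for time $s$ a Markov chain that merges $k$ adjacent blocks (among the $b$ current blocks) at rate $\tl_{b,k}$. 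The crucial point is that this ``chain on the blocks of $\Ci^n_t$'' is itself distributed as $(\Ci^b_{s'})_{s' \le s}$, the $\Lambda$-nested composition on $b$ elements — this is exactly the sampling consistency / self-similarity built into the definition of the rates $\tl_{b,k}$, which depend only on $b$ and $k$. Then, via \Cref{lem:uniformOrder} applied at the (conditionally fixed) time-$s$ composition of the $b$ blocks, this second-stage merging is encoded by an independent bridge distributed as $B^{I_s}$, restricted to $b$ points; and composing bridges corresponds precisely to composing the two merging operations, because the range of $B^{I_t} \circ B'_s$ is $B^{I_t}(B'_s([0,1]))$ and the interval-partition $I(B^{I_t}\circ B'_s)$ therefore has as its components the $B^{I_t}$-images of the components of the ``coarsening'' induced by $B'_s$ inside each component of $I_t$. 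I would verify the equality \eqref{eq:equivalenceSampling}-style relation between "$V_i, V_j$ in the same component of $I(B^{I_t}\circ B'_s)$" and "$i,j$ in the same block of the twice-merged composition'' by a direct check using that $B^{I_t}$ maps Lebesgue measure on $[0,1]$ in a measure-preserving way onto its range together with atoms.

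Having established \eqref{eq:semigroupComb} at the level of $(\Ci^n)$ for every $n$ — equivalently, for the empirical interval-partitions $I^n_t$ — I would pass to the limit $n \to \infty$. By \Cref{TH:gnedinNested}, $\sup_{t\ge 0} d_H(I^n_t, I_t) \to 0$ a.s., so $I^n_t \to I_t$ and $I^n_{t+s} \to I_{t+s}$; and the bridges $B^{I^n_t}$ (built from $I^n_t$ via \Cref{lem:uniformOrder}) converge to $B^{I_t}$ in an appropriate sense (one can argue through the associated measures, as in the proof of \Cref{PR:empirical}, that convergence of the interval-partitions with uniform order forces convergence of the bridges; continuity of composition and of $B \mapsto I(B)$ then gives the result). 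The Markov property in the limit follows because the conditional law of $I_{t+s}$ given $(I_{r})_{r \le t}$ is read off the conditional law of $\Ci^n_{t+s}$ given $(\Ci^n_r)_{r\le t}$ for all $n$ simultaneously, which by the above depends on the past only through $I_t$ (via $B^{I_t}$). The main obstacle I anticipate is the bridge-composition bookkeeping: making rigorous that the two-stage ordered-paintbox procedure is genuinely represented by the single bridge $B^{I_t}\circ B'_s$, in particular handling the atoms of the bridges (which correspond to interval components, i.e.\ to genuine blocks) versus the diffuse part (the "dust"), and checking that independence of $B'_s$ from $B^{I_t}$ is preserved — this requires carefully invoking the uniqueness-in-distribution clauses of \Cref{lem:uniformOrder} and the conditional independence structure coming from the Markov property of $(\Ci^n_t)_{t\ge0}$.
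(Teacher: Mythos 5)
Your proposal is correct and follows essentially the same route as the paper: the paper first proves the identity at the level of empirical interval-partitions (Lemma~\ref{lem:empiricalLambdaTransition}), showing via the flats of $(B^n_0\circ B_t)^{-1}=B_t^{-1}\circ(B^n_0)^{-1}$ that composing bridges realizes exactly the two-stage ordered-paintbox merging at rates $\tl_{b,k}$, and then passes to the limit using the a.s.\ uniform convergence of empirical bridges (Glivenko--Cantelli), \Cref{lem:continuity} and \Cref{PR:empirical}, with the Markov property deduced from that of the finite restrictions $(\Ci^n_t)_{t\ge 0}$. The only cosmetic difference is that where you argue bridge convergence ``through the associated measures,'' the paper sets up an explicit coupling $\xi_i=B_t^{-1}(V_i)$ so that $B^n_t$ is literally an empirical distribution function and uniform convergence is immediate.
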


\begin{remark}
    In the coming down from infinity case we have a simpler description
    of the semi-group of the $\Lambda$-comb. Suppose that $(I_t)_{t \ge 0}$ 
    starts from an interval-partition $I_0$ with $b$ blocks and no
    dust. Then any $k$ adjacent blocks of $I_0$ merge at rate
    $\tl_{b,k}$.
\end{remark}

The above proposition shows that the $\Lambda$-comb can be represented in 
terms of composition of independent bridges. As a direct corollary, we
provide an alternative construction of the $\Lambda$-comb based on
the flow of bridges of \cite{bertoin_stochastic_2003}. 
A flow of bridges is a collection $(B_{s,t})_{s \le t}$ of bridges which
fulfills the following three conditions:
\begin{enumerate}
    \item For any $s < r < t$, $B_{s,t} = B_{s,r} \circ B_{r,t}$ (cocycle property).
    \item For any $t_1 < \dots < t_p$, the bridges $(B_{t_1, t_2}, \dots, B_{t_{p-1}, t_p})$
        are independent, and $B_{t_1, t_2}$ is distributed as $B_{0, t_2 - t_1}$ 
        (stationarity and independence of the increments).
    \item The bridge $B_{0, t}$ converges to the identity map $\Id$ as $t
        \downarrow 0$ in probability in Skorohod topology.
\end{enumerate}
It can be seen from the cocycle property that the
interval-partition-valued process $(I(B_{0, t}))_{t \ge 0}$ is a nested
interval-partition.  \cite{bertoin_stochastic_2003} have defined a
sampling procedure to obtain a coalescent from a flow of bridges. In our
context, sampling from the flow of bridges according to this procedure is
the same as doing a paintbox based on $(I(B_{0,t}))_{t \ge 0}$. An
important result from~\cite{bertoin_stochastic_2003} states that given a
$\Lambda$-coalescent $(\Pi_t)_{t \ge 0}$, there exists a unique flow of
bridges whose associated coalescent is distributed as $(\Pi_t)_{t \ge 0}$
(see Theorem~1 in~\cite{bertoin_stochastic_2003}). We call it the
$\Lambda$-flow of bridges. As a corollary of this correspondence and of
\Cref{prop:lambdaGroup}, we are able to show that the comb associated to
the $\Lambda$-flow of bridges is the $\Lambda$-comb introduced above from
the transition rates.

\begin{corollary} \label{cor:combBridge}
    Let $\Lambda$ be a finite measure on $[0, 1]$, and let $(I_t)_{t \ge 0}$
    be the $\Lambda$-comb and $(B_{s,t})_{s \le t}$ be the $\Lambda$-flow of 
    bridges. Then
    \[
        (I_t)_{t \ge 0} \overset{(\mathrm{d})}{=} (I(B_{0,t}))_{t \ge 0}.
    \]
\end{corollary}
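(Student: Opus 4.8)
The plan is to identify both processes via their finite-dimensional distributions, using the characterization of the $\Lambda$-comb as a Markov process with the semigroup given in \Cref{prop:lambdaGroup}. First I would observe that $(I(B_{0,t}))_{t \ge 0}$ is, by the cocycle and independence-of-increments properties of a flow of bridges, itself a Markov process in the space of nested interval-partitions: indeed, writing $B_{0,t+s} = B_{0,t} \circ B_{t,t+s}$, where $B_{t,t+s}$ is independent of $(B_{0,r})_{r \le t}$ and distributed as $B_{0,s}$, one reads off that conditionally on $I(B_{0,t})$ (in fact conditionally on the whole past up to time $t$, but this extra information is killed by the independence of $B_{t,t+s}$), the law of $I(B_{0,t+s})$ is that of $I(B_{0,t} \circ B'_s)$ with $B'_s$ an independent bridge distributed as $B_{0,s}$. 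This is literally the transition rule~\eqref{eq:semigroupComb} in \Cref{prop:lambdaGroup}, provided we can match the one-time marginals $B_{0,s}$ with $B^{I_s}$; I return to this below.

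Second, I would pin down the initial condition. At $t = 0$ a flow of bridges has $B_{0,0} = \Id$, hence $I(B_{0,0}) = \interior([0,1] \setminus [0,1]) = \emptyset$, which is exactly the initial value of the $\Lambda$-comb (the interval-partition with only singleton blocks, i.e.\ all of $[0,1]$ is ``dust''), matching the convention of \Cref{S:lambda} that $(\Ci^n_t)$ starts from $n$ singletons ordered uniformly. Third, and this is where the two specific choices of $\Lambda$-object meet, I would invoke Theorem~1 of~\cite{bertoin_stochastic_2003}: the coalescent sampled from $(I(B_{0,t}))_{t \ge 0}$ via the extended paintbox is, by that theorem together with the identification (noted in the text) of Bertoin--Le~Gall sampling with paintbox sampling on $(I(B_{0,t}))_{t \ge 0}$, precisely a $\Lambda$-coalescent. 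By \Cref{lem:lambdaComposition} the coalescent sampled from the $\Lambda$-comb $(I_t)_{t \ge 0}$ is also a $\Lambda$-coalescent, and both are the \emph{same} $\Lambda$-coalescent since $\Lambda$ is fixed. Now for each fixed $s$, the interval-partition $I(B_{0,s})$ is represented by the bridge $B_{0,s}$, so it has a uniform order; by \Cref{lem:uniformOrder} the bridge representing $I(B_{0,s})$ is unique in distribution, hence $B_{0,s} \overset{(\mathrm d)}{=} B^{I(B_{0,s})}$, and since $I(B_{0,s})$ has the same law as $I_s$ (both give a $\Lambda$-coalescent marginal at time $s$, and by \Cref{TH:gnedinNested}/\Cref{TH:coalescentGeneral} the nested interval-partition is determined in distribution by the coalescent once we know it has uniform order) we get $B_{0,s} \overset{(\mathrm d)}{=} B^{I_s}$. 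This closes the loop: the two Markov processes have the same initial law and the same transition semigroup, hence the same law.

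I expect the main obstacle to be the rigorous justification that the one-time marginal laws agree, i.e.\ that $I(B_{0,s})$ has the same distribution as $I_s$ and that $B_{0,s}$ is distributed as $B^{I_s}$. The cleanest route is via the coalescent: since both $(I(B_{0,t}))_t$ and $(I_t)_t$ are nested interval-partitions with uniform order at every fixed time whose associated coalescent is \emph{the} $\Lambda$-coalescent, the uniqueness in distribution in \Cref{TH:gnedinNested} (applied to the uniform nested composition, which is recovered from the coalescent together with the uniform-order prescription) forces $(I(B_{0,t}))_t \overset{(\mathrm d)}{=} (I_t)_t$ directly — in fact this already gives the corollary without invoking \Cref{prop:lambdaGroup} at all. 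The subtle point one must check is that $(I(B_{0,t}))_{t\ge 0}$ genuinely has uniform order at each time: this follows because the coalescent it generates is exchangeable and, conditionally on the partition, the bridge $B_{0,t}$ places the atoms in uniformly random relative order (the $(V_i)$ in the definition of a bridge are i.i.d.\ uniform), so the induced composition has uniformly ordered blocks. Once this is in hand, the identification is immediate; the argument via \Cref{prop:lambdaGroup} then serves as an independent confirmation and is the statement of the corollary as advertised in the text.
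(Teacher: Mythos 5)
Your main argument --- identify both processes as Markov processes with the same initial condition and the same transition kernel, the kernel for $(I(B_{0,t}))_{t\ge 0}$ coming from the cocycle property together with the independence and stationarity of the increments, and the marginal identification $B_{0,s}\overset{(\mathrm{d})}{=}B^{I_s}$ coming from Theorem~1 of \cite{bertoin_stochastic_2003} combined with the uniqueness part of \Cref{lem:uniformOrder} --- is exactly the paper's proof, which merely writes the same comparison out as an equality of finite-dimensional distributions at times $t_1<\dots<t_p$. Your explicit treatment of the one-time marginal matching is if anything slightly more careful than the paper's.

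One caveat: the ``cleanest route'' you sketch at the end, which would bypass \Cref{prop:lambdaGroup} entirely, does not work as stated. The uniqueness in \Cref{TH:gnedinNested} identifies the law of a nested interval-partition from the law of the associated nested composition \emph{as a process}, i.e.\ from the joint law of the block orderings across all times. What you verify is only that each fixed-time marginal $\Ci_t$ has a uniformly ordered composition given $\Pi_t$. Since the ordering of a nested composition is a single random object propagated through time, its one-dimensional conditional marginals do not determine its conditional law given the whole coalescent trajectory, so you cannot yet conclude that the two nested compositions (the one generated by the flow and the one generated by the rates $\tilde{\lambda}_{b,k}$) agree in law as processes. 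A joint-in-time statement is still needed, and that is precisely what the semigroup computation via \Cref{prop:lambdaGroup} (equivalently, the paper's finite-dimensional-distribution computation) supplies. So keep \Cref{prop:lambdaGroup} in the proof; it is not merely an ``independent confirmation''.
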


\begin{proof}
    Let $p \ge 1$ and $0 \le t_1 < \dots < t_p$. Using the Markov
    property of $(I_t)_{t \ge 0}$ and the expression of the transitions~\eqref{eq:semigroupComb} we know that 
    \[
        (I_{t_1}, \dots, I_{t_p}) \overset{\text{(d)}}{=} 
        (I_{t_1}, I(B^{I_{t_1}} \circ B'_1), \dots, I(B^{I_{t_1}} \circ B'_1 \circ \dots \circ B'_{p-1})),
    \]
    where $(B'_1, \dots, B'_{p-1})$ are independent bridges and for $1 \le k \le p-1$, 
    $B'_k$ is distributed as $B^{I_{t_{k+1} - t_k}}$. 
    
    Let $(B_{s,t})_{s \le t}$ be the $\Lambda$-flow of bridges. Then from
    the cocycle property
    \[
        (I(B_{0,t_1}), \dots, I(B_{0,t_p})) =
        (I(B_{0, t_1}), \dots, I(B_{0, t_1} \circ B_{t_1, t_2} \circ \dots \circ B_{t_{p-1}, t_p})).
    \]
    Moreover as the flow of bridges has independent and stationary
    increments, $(B_{t_1, t_2}, \dots, B_{t_{p-1}, t_p})$ are independent
    bridges with the same distribution as above.
\end{proof}

    \subsection{Proof of Lemma~\ref{lem:uniformOrder}}
    \label{SS:proofLemmaUniform}

We will need the following continuity result. 
\begin{lemma} \label{lem:continuity}
    The map $I \colon B \mapsto I(B)$ that maps a 
    bridge to its associated interval-partition is 
    continuous when the space of interval-partitions is 
    endowed with the Hausdorff topology and the space
    of bridges with the Skorohod topology. 
\end{lemma}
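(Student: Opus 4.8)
The plan is to decompose Skorohod convergence into a time-change part and a uniform part, and to observe that $I(\cdot)$ depends on a bridge only through its range, so that it is completely insensitive to the time-change part and Lipschitz with respect to the uniform part.

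First I would record the key topological identity. For any subset $A$ of $[0,1]$ one has $[0,1]\setminus\interior([0,1]\setminus A)=\overline{A}$, so for a bridge $B$ (in particular a c\`adl\`ag map $[0,1]\to[0,1]$ with $B(0)=0$, $B(1)=1$, so that $I(B)$ is indeed an interval-partition contained in $\OpenInterval{0,1}$) we obtain
\[
    [0,1]\setminus I(B)=\overline{B([0,1])}.
\]
Plugging this into the definition of $d_H$ shows that for two bridges $B,B'$ the quantity $d_H(I(B),I(B'))$ is exactly the usual Hausdorff distance between the compact sets $\overline{B([0,1])}$ and $\overline{B'([0,1])}$. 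Hence it suffices to prove that $B\mapsto\overline{B([0,1])}$ is continuous on the space of bridges with the Skorohod topology, with values in the compact subsets of $[0,1]$ equipped with the Hausdorff distance.

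Next, given $B_n\to B$ in the Skorohod topology, I would pick increasing homeomorphisms $\lambda_n$ of $[0,1]$ with $\|\lambda_n-\Id\|_\infty\to0$ and $\delta_n:=\|B_n\circ\lambda_n-B\|_\infty\to0$. Since each $\lambda_n$ is a bijection of $[0,1]$, we have $(B_n\circ\lambda_n)([0,1])=B_n([0,1])$, hence $I(B_n\circ\lambda_n)=I(B_n)$: the time change does not affect $I$. So we may replace $B_n$ by $g_n:=B_n\circ\lambda_n$ and assume plain uniform convergence $\|g_n-B\|_\infty=\delta_n\to0$. Then for any $y=g_n(x)\in g_n([0,1])$ the point $B(x)$ lies in $\overline{B([0,1])}$ and $|y-B(x)|\le\delta_n$, and symmetrically every $B(x)$ lies within $\delta_n$ of $\overline{g_n([0,1])}$; taking closures, the Hausdorff distance between $\overline{g_n([0,1])}$ and $\overline{B([0,1])}$ is at most $\delta_n$. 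Combined with the two previous steps this gives $d_H(I(B_n),I(B))\le\delta_n\to0$, which is the claim.

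The only genuine content is the observation that $I(B)$ is a functional of the range of $B$ alone, which is precisely what makes it blind to the time-change component that distinguishes Skorohod convergence from uniform convergence; everything else is the elementary identity above together with a one-line estimate. The main (mild) obstacle is therefore just to state this identity and the identification of $d_H$ with the Hausdorff distance on complements carefully, paying a little attention to the endpoints $0$ and $1$.
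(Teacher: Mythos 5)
Your proof is correct and follows essentially the same route as the paper's: both identify $d_H(I(B),I(B'))$ with the Hausdorff distance between the (closures of the) ranges of the bridges and then absorb the Skorohod time change, your version by noting that $B_n\circ\lambda_n$ has the same range as $B_n$, the paper's by substituting $y=\lambda_n(x)$ directly into the infimum --- the same observation in two guises. No gap.
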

\begin{proof}
    Let $B^n$ be a sequence of bridges that converge to $B$ in the
    Skorohod topology. We know that we can find a
    sequence of continuous bijections $\lambda_n$ from $\ClosedInterval{0,1}$
    to $\ClosedInterval{0,1}$ such that 
    \[
        \lim_{n \to \infty} \Norm{\lambda_n - \text{Id}}{}_{\infty} = 0
    \] 
    and 
    \[
        \lim_{n \to \infty} \Norm{B - B^n \circ \lambda_n}{}_{\infty} = 0.
    \]
    Let $I = I(B)$ and $I_n = I(B^n)$. As
    the interval-partitions are obtained from bridges, we can re-write the 
    Hausdorff distance as
    \[
        d_H(I, I_n) = 
        \sup_{x \in \ClosedInterval{0,1}} \inf_{y \in \ClosedInterval{0,1}} \Abs{B^n(x) - B(y)} 
        \vee
        \sup_{x \in \ClosedInterval{0,1}} \inf_{y \in \ClosedInterval{0,1}} \Abs{B^n(y) - B(x)}.
    \]
    We have 
    \[
        \sup_{x \in \ClosedInterval{0,1}} \inf_{y \in \ClosedInterval{0,1}}
        \Abs{B(x) - B^n(y)} \le \sup_{x \in \ClosedInterval{0,1}} \Abs{B(x) - B^n(\lambda_n(x))}
    \]
    and
    \[
        \sup_{x \in \ClosedInterval{0,1}} \inf_{y \in \ClosedInterval{0,1}}
        \Abs{B(y) - B^n(x)} \le \sup_{x \in \ClosedInterval{0,1}} \Abs{B(\lambda_n^{-1}(x)) - B^n(x)}
    \]
    and thus
    \[
        \lim_{n \to \infty} d_H(I, I^n) = 0,
    \]
    which ends the proof.
\end{proof}

\begin{proof}[Proof of \Cref{lem:uniformOrder}]
First suppose that $I$ is of the form $I(B)$ for some bridge $B$. 
Consider $B^{-1}$ the generalized inverse of $B$. Let 
$(V_i)_{i \ge 1}$ be i.i.d.\ uniform variables and $\Ci$
be the composition obtained through an ordered paintbox using
these variables. By construction of the ordered
paintbox and as $B^{-1}$ is non-decreasing, the order of the blocks of $\Ci$ is
given by the order of the variables $(B^{-1}(V_i))_{i \ge 1}$. Conditionally on
the bridge these variables are i.i.d.\ and thus their order is uniform. 

Now let $I$ be an interval-partition with a uniform order and $\Ci$ be
the composition obtained by an ordered paintbox. We will first consider
the case where $I$ has finitely many interval components and no dust. The
fact that the order of the blocks of the composition $\Ci$ is uniform
shows that the order of the interval components of $I$ is uniform (each
block of $\Ci$ corresponds to an interval of $I$). Let $K$ be the number
of blocks of $I$, and let $V^*_1 < \dots < V^*_K$ be the order statistics
of independent uniform variables. Suppose that $\beta_1$ is the length of
the left-most interval of $I$, $\beta_2$ that of the second left-most,
etc.\ then
\[
    \forall u \in [0, 1],\; B(u) = \sum_{i = 1}^K \beta_i \Indic{V^*_i \le u}
\]
is a bridge such that $I(B) = I$. Indeed, since the order of the
intervals is uniform, there is a uniform permutation $\sigma$ of $[K]$
independent of $V^*_1,\dots, V^*_K$, such that $(\beta_{\sigma(i)})$ is
ranked in nonincreasing order. This shows that 
\[
    B(u) = \sum_{i = 1}^K \beta_{\sigma(i)} \Indic{V^*_{\sigma(i)} \le u}
\]
indeed defines a bridge. This also shows the uniqueness in distribution
of $B$.

Let us turn to the general case. Let $n \ge 1$ and consider $I^n$ the
empirical interval-partition associated to $\Ci \cap [n]$. By assumption
the interval-partition $I^n$ has a uniform order, thus using the above
argument we can find a unique bridge $B^n$ such that $I(B^n) = I^n$. We
know that $I^n$ converges a.s.\ to $I$. Let $\beta_n$ (resp.\ $\beta$) be
the mass-partition associated to $I^n$ (resp.\ $I$). As the function that
maps an interval-partition to its mass-partition is continuous, we have
that $\beta_n$ converges a.s.\ to $\beta$ (see e.g.~\cite{bertoin_2006}
Proposition~2.2).
We can now make use of another continuity result, namely Lemma~1
from~\cite{bertoin_stochastic_2003}, to show that the sequence of bridges
$(B^n)_{n \ge 1}$ converges in distribution to a bridge $B$ obtained from
the mass-partition $\beta$. Using \Cref{lem:continuity}, we know
that $I(B^n)$ converges in distribution to $I(B)$. By uniqueness of the
limit, we get that
\[
    I \overset{(\text{d})}{=} I(B),
\]
and that $B$ is unique.
\end{proof}

    \subsection{Proof of Proposition~\ref{prop:lambdaGroup}}
    \label{SS:proofSemigroup}

We will first prove \Cref{prop:lambdaGroup} for empirical 
interval-partitions and then take the limit. We start by proving the 
following lemma, which is the direct reformulation of
\Cref{prop:lambdaGroup} for empirical interval-partitions.

\begin{lemma} \label{lem:empiricalLambdaTransition}
    Let $\Ci^n_0$ be an exchangeable composition of $[n]$ with a uniform
    order on its blocks, and let $(\Ci^n_t)_{t \ge 0}$ be the Markov
    process started from $\Ci^n_0$ with transitions $(\tl_{b,k};\, 2 \le
    k \le b < \infty)$. If $(I^n_t)_{t \ge 0}$ denotes the empirical
    nested interval-partition associated to $(\Ci^n_t)_{t \ge 0}$, then
    conditionally on $\Ci^n_0$,
    \[
        I^n_t \overset{\mathrm{(d)}}{=} I(B_0^n \circ B_t),
    \]
    where $B_0^n$ and $B_t$ are independent bridges such that 
    $I(B^n_0) = I^n_0$ and $I(B_t) = I_t$, the $\Lambda$-comb at time
    $t$.
\end{lemma}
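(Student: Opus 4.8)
The plan is to work entirely at the level of compositions of $[n]$ and exploit the Markov property of $(\Ci^n_t)_{t \ge 0}$. Fix $n$, condition on $\Ci^n_0$, and let $b$ be its number of blocks. The key observation is that the empirical interval-partition $I^n_t$ is a deterministic function of the \emph{sequence of block sizes in order} of $\Ci^n_t$; moreover, by the strong Markov property, given $\Ci^n_t$ (equivalently, given the ordered sequence of its block sizes), the future $(\Ci^n_{t+s})_{s\ge 0}$ is again a Markov chain with the same transition rates $(\tl_{b',k})$, started from $\Ci^n_t$. So the composition of the two time-increments should translate, on the interval-partition side, into composition of two independent bridges. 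First I would make precise the dictionary: by \Cref{lem:uniformOrder}, $I^n_0$ has a uniform order (since $\Ci^n_0$ does), hence there is a bridge $B^n_0$ with $I(B^n_0)=I^n_0$, and similarly $I_t$, the $\Lambda$-comb at time $t$, has a uniform order with associated bridge $B_t$, $I(B_t)=I_t$. What has to be shown is $I^n_t \overset{\mathrm d}{=} I(B^n_0\circ B_t)$ conditionally on $\Ci^n_0$.

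The cleanest route is via the ordered paintbox combined with the bridge representation. Realize $\Ci^n_0$ through an ordered paintbox on $I^n_0$ with uniform variables; equivalently, place the $n$ labels at i.i.d.\ uniform positions and read off the order. Using $B^n_0$ and its generalized inverse, the blocks of $\Ci^n_0$ correspond to the "flats" of $B^n_0$, and the order of the blocks is the order of the i.i.d.\ variables $((B^n_0)^{-1}(V_i))_{i}$. Now I would run the dynamics: starting from $\Ci^n_0$, the chain $(\Ci^n_t)_{t\ge 0}$ merges $k$ adjacent blocks at rate $\tl_{b,k}$, which is exactly the mechanism described before \Cref{lem:lambdaComposition}; this is the same mechanism that defines the $\Lambda$-comb $(I_t)_{t\ge 0}$ restricted to the $b$ "current" blocks. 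The point is that the further merging of the $b$ ordered blocks of $\Ci^n_0$ over the time interval $[0,t]$ is governed by the $\Lambda$-comb started from a proper interval-partition with $b$ blocks and uniform order — which, by \Cref{lem:uniformOrder}, is encoded by a bridge distributed as $B_t$ conditioned to have $b$ atoms; but since $\Ci^n_0$ has $b$ blocks, and the projection of the $\Lambda$-comb's composition to $[n]$ is exactly $(\Ci^n_t)_{t\ge 0}$ by \Cref{lem:lambdaComposition}, this bridge is precisely $B_t$. Composing the two interval-partition refinements corresponds to composing the bridges: $I^n_t = I(B^n_0 \circ B_t)$ in distribution, with $B^n_0$ and $B_t$ independent because the dynamics after time $0$ depends on $\Ci^n_0$ only through its ordered block-size sequence, which is captured by $I^n_0$ alone.

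A more computational alternative, which I would fall back on if the conceptual argument above needs more care about the conditioning, is to verify the identity directly from the transition rates: show that both sides, as functions of $t$, solve the same forward Kolmogorov system. On the left, $I^n_t$ is the empirical interval-partition of the chain with rates $\tl_{b,k}$; on the right, $I(B^n_0\circ B_t)$ where $B_t=B^{I_t}$ and $(I_t)$ is the $\Lambda$-comb, whose generator is known from the construction. One computes the infinitesimal action of $t\mapsto I(B^n_0\circ B_t)$ at $t=0^+$: since $B_0=\mathrm{Id}$ and $B_t \to \mathrm{Id}$, the leading-order change is governed by the rate at which $B_t$ first departs from the identity, i.e.\ by the $\tl_{b',k}$ rates applied to the $b'=b$ blocks of $I^n_0$; the composition $B^n_0\circ(\cdot)$ then just relabels which adjacent blocks of $I^n_0$ get merged. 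Matching this with the generator of the chain $(\Ci^n_t)$ started from $\Ci^n_0$ gives the identity for all $t$ by uniqueness of the semigroup. The main obstacle, in either approach, is bookkeeping the independence and the precise conditioning — specifically, arguing that the bridge $B_t$ appearing on the right is independent of $B^n_0$ and has exactly the law $B^{I_t}$ (rather than some law that depends on $b$), which rests on the sampling-consistency and the uniform-order properties established in \Cref{lem:lambdaComposition} and \Cref{lem:uniformOrder}; the continuity lemma \Cref{lem:continuity} is what makes the eventual passage to the limit $n\to\infty$ (in the proof of \Cref{prop:lambdaGroup}) legitimate, but is not itself needed for this lemma.
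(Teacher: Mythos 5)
Your first (conceptual) route is essentially the paper's proof: the paper writes $B^n_0(r)=\frac{1}{n}\sum_i \Card(A_i)\Indic{U_i\le r}$ with i.i.d.\ uniform jump times $U_i$, shows via $(B^n_0\circ B_t)^{-1}=B_t^{-1}\circ(B^n_0)^{-1}$ that $I(B^n_0\circ B_t)$ merges the intervals of $I^n_0$ whose jump times land in the same component of $I_t$, and identifies this with an ordered paintbox of $[K]$ on $I_t$, hence with the $\tl_{b,k}$-dynamics — exactly your "composition of bridges equals composition of refinements" step, which you assert but which is the one computation you would still need to write out. One phrase in your plan is off: there is no "$B_t$ conditioned to have $b$ atoms"; the bridge on the right is just $B^{I_t}$, and the $b$ blocks enter only through the $b$ uniform jump times of $B^n_0$ being thrown onto $I_t$ as paintbox variables (as you in fact conclude), and your remarks about \Cref{lem:continuity} being needed only for the $n\to\infty$ limit in \Cref{prop:lambdaGroup} are correct.
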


\begin{proof}
    Let us denote by $(A_1, \dots, A_K)$ the blocks of $\Ci^n_0$ in
    order of their least element. As $\Ci^n_0$ has a uniform order on
    its blocks, according to \Cref{lem:uniformOrder} we can find 
    $(U_1, \dots, U_K)$ such that conditionally on $K$ these are 
    i.i.d.\ uniform variables on $[0, 1]$ and 
    \[
        \forall r \in [0, 1],\; B^n_0(r) = \frac{1}{n} \sum_{i = 1}^K
        \Card(A_i) \Indic{U_i \le r}
    \]
    defines a bridges satisfying $I(B^n_0) = I^n_0$. Let $B_t$ be
    independent and such that $I(B_t) = I_t$. To each interval component
    of $I^n_0$ corresponds a unique block $A_i$ of $\Ci^n_0$, and thus a
    unique jump time $U_i$ of $B^n_0$. We claim that $I(B^n_0 \circ B_t)$
    is obtained by merging the intervals of $I^n_0$ whose jump times
    belong to the same interval component of $I_t$. To see this, notice
    that by definition $I(B^n_0 \circ B_t)$ is the set of flats of 
    $(B^n_0 \circ B_t)^{-1} = B^{-1}_t \circ (B^n_0)^{-1}$. Thus $x$ and 
    $y$ belong to the same flat of $(B^n_0 \circ B_t)^{-1}$ \tiff 
    $(B^n_0)^{-1}(x)$ and $(B^n_0)^{-1}(y)$ belong to the same flat of 
    $B_t^{-1}$, that is to the same interval component of $I_t$. The
    claim is proved by further noting that $(B^n_0)^{-1}(x)$ is the jump time
    of the interval component of $I^n_0$ to which $x$ belongs.

    The previous procedure can be rephrased in terms of an ordered
    paintbox. The interval-partition $I(B^n_0 \circ B_t)$ is obtained by
    labeling uniformly the $K$ blocks of $I^n_0$, sampling a composition
    $\Ci'_t$ of $[K]$ according to an ordered paintbox based on $I_t$ and
    merging the intervals of $I^n_0$ whose labels belong to the same
    block of $\Ci'_t$. As $I_t$ is the $\Lambda$-comb at time $t$, the 
    composition $\Ci'_t$ is distributed as $\Ci^K_t$, the nested
    composition at time $t$ obtained by merging $K$ initial singleton blocks
    ordered uniformly according to the rates $(\tl_{b,k};\, 2 \le k \le b
    < \infty)$. Thus $I(B^n_0 \circ B_t)$ can be obtained by letting its
    intervals merge at rate $(\tl_{b,k};\, 2 \le k \le b < \infty)$,
    and is distributed as $I^n_t$.
\end{proof}

\begin{proof}[Proof of \Cref{prop:lambdaGroup}]
    Let $(I_t)_{t \ge 0}$ be the $\Lambda$-comb, and 
    $(V_i)_{i \ge 1}$ be an independent sequence of i.i.d.\ uniform
    variables on $[0, 1]$. Denote by $(\Ci^n_t)_{t \ge 0}$ the nested
    composition of $[n]$ obtained by an ordered paintbox based on
    $(I_t)_{t \ge 0}$ using the sampling variables $(V_i)_{i \ge 1}$, and 
    let $(I^n_t)_{t \ge 0}$ be the corresponding empirical nested
    interval-partition. According to \Cref{lem:lambdaComposition} the
    interval-partition $I_t$ has a uniform order, and thus there exists a
    bridge $B_t$ such that $I(B_t) = I_t$. Conditionally on $B_t$, 
    the sequence
    \[
        \forall i \ge 1,\; \xi_i = B_t^{-1}(V_i)  
    \]
    is i.i.d. We denote by $\mu_t$ the (random) law of $\xi_1$
    conditionally on $B_t$, and by $\mu^n_t$ its empirical distribution
    defined as
    \[
        \mu^n_t = \frac{1}{n} \sum_{i = 1}^n \delta_{\xi_i}.
    \]
    Note that $B_t$ is the distribution function of $\mu_t$. If $B^n_t$
    denotes the distribution function of $\mu^n_t$, then $B^n_t$ is a
    bridge such that $I(B^n_t) = I^n_t$. It follows from
    \Cref{lem:empiricalLambdaTransition} that
    \begin{align} \label{eq:empiricalGroup}
        (I^n_t, I^n_{t+s}) \overset{\mathrm{(d)}}{=} (I^n_t, I(B^n_t \circ B'_s))
    \end{align}
    where $B'_s$ is an independent bridge distributed as $B^{I_s}$.
    The result will follow by taking the limit
    in~\eqref{eq:empiricalGroup}.

    According to the Glivenko-Cantelli theorem (see Proposition~4.24 
    in \cite{kallenberg_foundations_2002}), the sequence of bridges 
    $(B^n_t)_{n \ge 1}$ converges almost surely to $B_t$ in the uniform
    topology. Thus $B^n_t \circ B'_s$ converges a.s.\ in the uniform
    topology to $B_t \circ B'_s$, and by \Cref{lem:continuity} and 
    \Cref{PR:empirical} the right-hand side of~\eqref{eq:empiricalGroup}
    converges a.s.\ to $(I_t, I(B_t \circ B'_s))$. According to
    \Cref{PR:empirical}, the left-hand side converges a.s.\ to 
    $(I_t, I_{t+s})$ and we have proved that~\eqref{eq:semigroupComb} holds.
        
    It remains to show that $(I_t)_{t \ge 0}$ is Markovian. It is
    sufficient to prove that $(\Ci_t)_{t \ge 0}$ is Markovian.
    This follows from standard arguments from measure theory by noting that
    $\sigma$-field of $(\Ci_t)_{t \ge 0}$ is induced by that of its
    restrictions to $[n]$, and that all of these restrictions are Markov.
\end{proof}

    \subsection{Dynamical combs}
    \label{SS:dynamicalComb}

As mentioned in the introduction, an exchangeable coalescent models the
genealogy of a population observed at a given time. By varying the
observation time we obtain a dynamical genealogy that has been named the
\emph{evolving coalescent}.  There has been much interest into studying
evolving coalescents.  For example, if the coalescent at a fixed time is
the Kingman coalescent, the authors of~\cite{pfaffelhuber_process_2006,
pfaffelhuber_tree_2011} have studied statistics of the evolving
coalescent using a look-down representation, the authors
of~\cite{greven_tree-valued_2008} studied the dynamics of the entire tree
structure using the framework of the Gromov-weak topology. Evolving
coalescents such that the coalescent at a fixed time is a more general
$\Lambda$-coalescent have also been considered, see e.g.\
\cite{kersting2014} for the case of Beta-coalescents
and~\cite{schweinsberg2012} for the Bolthausen-Sznitman coalescent. 

In this section we show that the previous results on the Markov property
of the $\Lambda$-comb allow us to define a comb-valued process, the
\emph{evolving comb}, such that sampling from the evolving comb at a
fixed time yields a $\Lambda$-coalescent. The evolving comb contains all
the information about the dynamical genealogy but does not require the
cumbersome framework of random metric spaces endowed with the
Gromov-Hausdorff topology as in~\cite{greven_tree-valued_2008}. For the
sake of clarity we will only consider the evolving Kingman comb where we
have an explicit construction of the genealogy at a fixed time.

\medskip

We will build the evolving Kingman comb by defining its semi-group.
Recall that when the coalescent associated to a nested interval-partition
comes down from infinity, the comb can be represented using a comb
function, see \Cref{SS:introComb}.  Let $f$ be a deterministic comb
function and $s > 0$, we want to describe the genealogy of the population
at time $s$ given that its genealogy at time $0$ is encoded by $f$. The
procedure we follow is illustrated in \Cref{F:composition}.  Recall the
Kingman comb construction discussed in introduction. Let $(e_i)_{i \ge
1}$ be a sequence of i.i.d.\ exponential variables, and $(U_i)_{i \ge 1}$
a sequence of i.i.d.\ uniform $[0, 1]$ variables. For $i \ge 1$, we set
\[
    T_i = \sum_{k \ge i+1} \frac{2}{k(k-1)} e_k.
\]
The Kingman comb is given by 
\[
    f_K = \sum_{i \ge 1} T_i \Indic*{U_i}.
\]
It is known from~\cite{lambert_recovering_2016}, Proposition~3.1,  that
the above construction generates the comb associated to the flow of
bridges, i.e.\ the $\Lambda$-comb associated to the Kingman coalescent.
There are only finitely many teeth of $f_K$ that are larger than $s$,
i.e.\ such that $T_i \ge s$, say $N_s$. Let $\sigma$ be their order,
e.g.\ $\sigma(1)$ is the label of the left-most tooth. Consider $V^*_1 <
\dots < V^*_{N_s + 1}$ the order statistics of $N_s +1$ independent
i.i.d.\ uniform variables. For $1 \le k \le N_s$ let $M_k$ be the
greatest tooth of $f$ in the interval $(V^*_k, V^*_{k+1})$, i.e.
\[
    M_k = \sup_{(V^*_k, V^*_{k+1})} f. 
\]
We define new variables $(\hat{T}_i)_{i \ge 1}$ as follows
\[
    \forall i > N_s,\; \hat{T}_i = T_i,
\]
and 
\[
    \forall i \le N_s,\; \hat{T}_{\sigma(i)} = M_i + s.
\]
We define
\[
    \hat{f}_K = \sum_{i \ge 1} \hat{T}_i \Indic*{U_i}.
\]
Geometrically, the comb $\hat{f}_K$ is obtained through a cutting and
pasting procedure illustrated in \Cref{F:composition}. 

The above construction defines an operator given by
\[
    P_t F(f) = \E[F(\hat{f}_K)],
\]
for all continuous bounded functions $F$. We will show below that the
family of operators $(P_t)_{t \ge 0}$ is a semi-group. Thus we can define
a comb-valued Markov process $(\Ii^r)_{r \ge 0}$ whose transitions are
given by the above construction. We call the process $(\Ii^r)_{r \ge 0}$
the \emph{evolving Kingman comb}. 
\begin{lemma}
    The family of operators $(P_t)_{t \ge 0}$ is a semi-group.
    Moreover the Kingman comb is a stationary distribution of the
    evolving Kingman comb.
\end{lemma}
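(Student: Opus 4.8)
The claim has two parts: (i) the family $(P_t)_{t \ge 0}$ is a semi-group, i.e.\ $P_0 = \Id$ and $P_{t+s} = P_t P_s$; (ii) the Kingman comb is a stationary distribution of the evolving Kingman comb. The strategy is to \emph{not} verify the semi-group property by hand from the cutting-and-pasting construction, but rather to identify the transition described geometrically with the composition-of-bridges transition of \Cref{prop:lambdaGroup}, applied to the Kingman case. Concretely, by \Cref{cor:combBridge}, the Kingman comb is the $\Lambda$-comb for $\Lambda = \delta_0$, and \Cref{prop:lambdaGroup} tells us that the $\Lambda$-comb is a Markov process with the transition $I_{t+s} \overset{(\mathrm{d})}{=} I(B^{I_t} \circ B'_s)$ where $B'_s \overset{(\mathrm{d})}{=} B^{I_s}$ is independent. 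The key step is therefore a lemma: the law of $\hat f_K$ obtained from the cutting-and-pasting construction applied to a deterministic comb $f$, reinterpreted in interval-partition language as a random nested interval-partition started from $I_0 = \{f < \cdot\}$-type data, coincides with the law of $I(B^{I_0} \circ B'_s)$ where $I(B'_s)$ is the Kingman comb at time $s$ and $I(B^{I_0}) = $ the interval-partition encoding $f$.

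\textbf{Carrying this out.} First I would recall that the Kingman comb $f_K$ of the displayed construction is exactly the $\Lambda$-comb at time $s$ for $\Lambda = \delta_0$, by~\cite{lambert_recovering_2016}, Proposition~3.1, as stated in the text. Then I would unpack the cutting-and-pasting construction: the teeth of $f$ larger than $s$ partition $(0,1)$ into $N_s + 1$ (or $N_s$, depending on boundary conventions) maximal sub-intervals on which $f < s$; on each such sub-interval $(V^*_k, V^*_{k+1})$ we graft a rescaled independent copy of the sub-comb of $f$ restricted there, lifted by $s$. This is precisely the geometric picture of composing the bridge $B^{I_0}$ associated to the time-$0$ comb with the bridge $B'_s$ associated to the Kingman comb at time $s$: the jump points of $B'_s$ (at the order statistics $V^*_k$ of uniforms, with masses summing the Kingman-coalescent block sizes) say which interval components of $I_0$ get merged, and on the merged block the new tooth has height $M_k + s = (\sup$ of $f$ over the block$) + s$, which is exactly how $d_I$ for a composition of combs is computed (take the sup of $f_I$ over the merged region, then add the shift). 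Hence the transition operator $P_s$ of the evolving Kingman comb coincides with the semi-group of \Cref{prop:lambdaGroup} restricted to $\Lambda = \delta_0$, and in particular $(P_t)_{t\ge0}$ inherits the semi-group property from the Markov property established there.

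\textbf{Stationarity.} For part (ii), let $(I_t)_{t \ge 0}$ be the $\delta_0$-comb, i.e.\ the Kingman comb viewed as a flow-of-bridges comb $(I(B_{0,t}))_{t \ge 0}$ with $(B_{s,t})_{s \le t}$ the $\Lambda$-flow of bridges (\Cref{cor:combBridge}). Start the evolving Kingman comb from a realisation of the Kingman comb, i.e.\ from $I(B_{-r,0})$ for $r \ge 0$ using the stationarity of the increments of the flow to have a two-sided flow; then applying the transition $P_s$ and using the cocycle property $B_{-r,s} = B_{-r,0} \circ B_{0,s}$ together with the identification of $P_s$ with composition by an independent copy of $B^{I_s} \overset{(\mathrm{d})}{=} B_{0,s}$ shows that the comb after one step is $I(B_{-r,0} \circ B_{0,s}) \overset{(\mathrm{d})}{=} I(B_{-r-s,-s} \circ B_{-s,0}) \overset{(\mathrm{d})}{=} I(B_{-r-s,0})$, which has the same law as the Kingman comb by stationarity of the increments. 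Taking the marginal at the ``present'' and letting $r$ play no role (or simply: $I(B_{0,s})$ composed on the left by an independent Kingman comb is again a Kingman comb, since $B^{I_s} \circ B'$ for $B'$ an independent Kingman bridge has the law of a Kingman bridge by the flow property) gives $P_s(\text{law of Kingman comb}) = \text{law of Kingman comb}$.

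\textbf{Main obstacle.} The genuinely delicate point is the first lemma — checking that the informal geometric ``cut at the big teeth, rescale and graft, shift up by $s$'' recipe is \emph{exactly} the interval-partition composition $I(B^{I_0} \circ B'_s)$ and not merely a heuristic resemblance. This requires care with: (a) the boundary conventions ($N_s$ versus $N_s+1$ intervals, what happens at $0$ and $1$), (b) the fact that on each grafted sub-interval one must graft an \emph{independent} copy of the Kingman sub-comb with the correct (Lebesgue-proportional) mass, which is where the independence and the precise form of the Kingman bridge's jump locations (order statistics of i.i.d.\ uniforms) enter, and (c) translating ``sup of $f$ over a merged region, plus $s$'' into the composition formula $(B^{I_0}\circ B'_s)^{-1} = (B'_s)^{-1}\circ (B^{I_0})^{-1}$ for flats, exactly as in the proof of \Cref{lem:empiricalLambdaTransition}. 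Once this identification is in place, both the semi-group property and stationarity are formal consequences of \Cref{prop:lambdaGroup} and \Cref{cor:combBridge}, with no further computation.
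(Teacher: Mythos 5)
Your proposal is correct and follows essentially the same route as the paper: the paper's proof likewise reduces the semi-group identity to the statement that the portion of $f_{t+s}$ below level $t+s$ is a truncated Kingman comb, by rephrasing the cutting-and-pasting as the ordered paintbox/bridge-composition transition and invoking the Markov property of the Kingman comb from \Cref{prop:lambdaGroup}, with stationarity following from the same composition argument (equivalently, the stationary independent increments and cocycle property of the flow of bridges, as in \Cref{cor:combBridge}). The only caveat is a slight abuse in saying that $P_s$ ``coincides with'' the semi-group of \Cref{prop:lambdaGroup} --- the evolving comb is comb-valued while the $\Lambda$-comb is interval-partition-valued, so the correct statement (which your detailed unpacking does supply, and which the paper is equally informal about) is that each level-$u$ slice of $\hat f_K$ is given by composing the time-$0$ comb's bridge with an independent Kingman bridge.
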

\begin{proof}
    Let $s,t \ge 0$, let $f$ be a deterministic comb. We call $f_t$ the
    comb obtained through the above procedure at level $t$ starting from $f$, 
    and $f_{t+s}$ the one obtained according to the above procedure at level
    $s$, but using $f_t$ as starting comb. We need to show that $f_{t+s}$ is
    distributed as $f'_{t+s}$, the comb obtained at level $t+s$ starting from $f$. 

    It is sufficient to show that the portion of the comb $f_{t+s}$ lying
    between level $0$ and $t+s$ is distributed as a Kingman comb truncated at
    height $t+s$. To show that, it is more convenient to see combs as nested
    interval-partitions. The procedure described above can be rephrased in terms 
    of composition. Suppose that $f_{t+s}$ has $K$ truncated teeth at time
    $s$, this defines $K+1$ intervals of $[0, 1]$. For each of these intervals of 
    $f_{t+s}$, we throw a uniform variable. Two intervals merge at the first moment
    when their corresponding variables belong to the same subinterval of $f_t$.
    This is exactly the description of the ordered paintbox procedure. Thus,
    using the Markov property of the Kingman comb we know that $f_{t+s}$,
    between level $0$ and $t+s$, is distributed as the truncation of a Kingman
    comb. This argument also shows that the Kingman comb is a stationary distribution.
\end{proof}

\begin{figure}[t]
    \begin{center}
    \includegraphics[width=\textwidth]{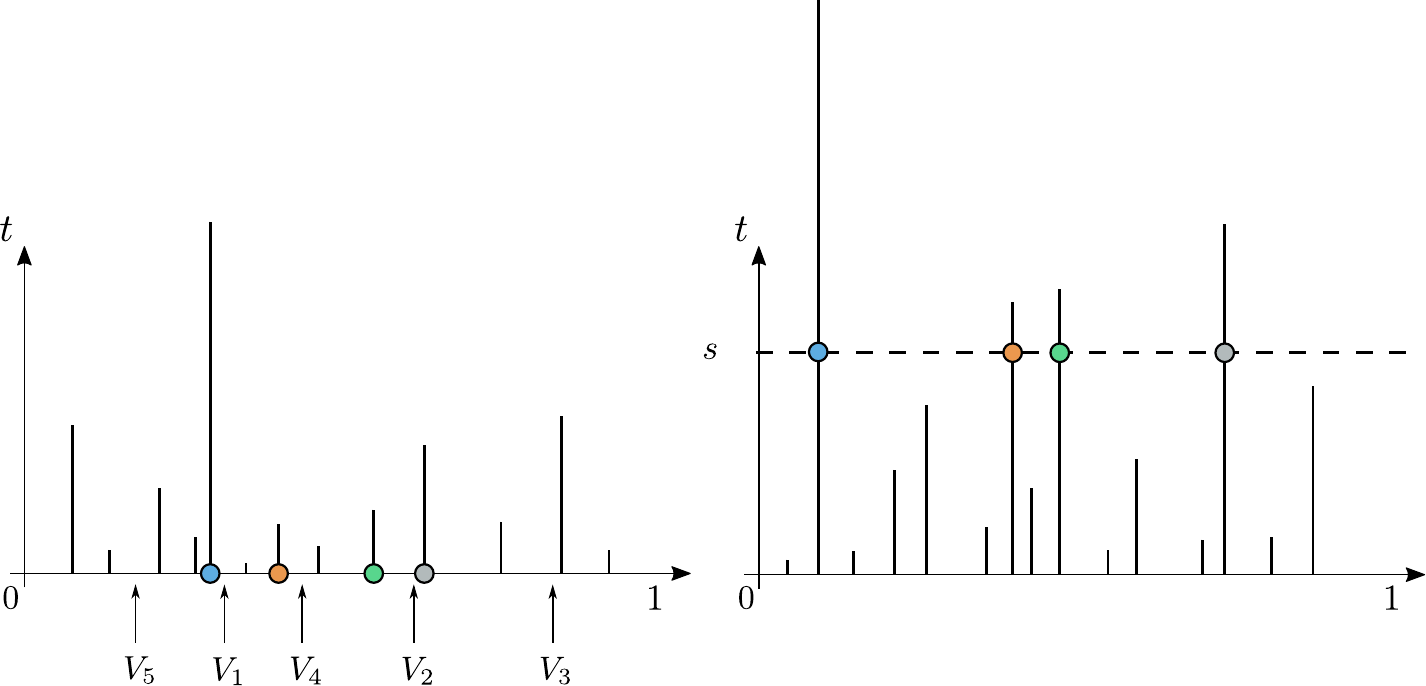}
    \caption{Transition of the evolving Kingman comb. The
    comb at time $s$, $\hat{f}_K$, is represented on the right, and the initial comb
    $f$ is on the left. To obtain $\hat{f}_K$, one has first to erase
    the part of the right comb lying above level $s$. Here we have
    erased $N_s = 4$ teeth. Then throw $N_s + 1$ uniform variables
    $V_1, \dots, V_{N_s + 1}$, this defines $N_s$ intervals between
    these variables, here $(V_5, V_1), (V_1, V_4)$, $(V_4, V_2)$ and $(V_2, V_3)$. 
    Finally take the largest tooth of $f$ in each of these
    intervals, represented with a coloured root, and paste it in place of
    the erased tooth.}
    \label{F:composition}
    \end{center}
\end{figure}

\medskip

This construction can be easily extended to the case of
$\Lambda$-coalescents that come down from infinity, even though we do not
have an explicit construction of the comb in this case. In short, to
obtain the evolving comb at time $s$, one needs to sample independently a
new comb, erase the portion lying above height $s$ and replace it by
teeth sampled from the original comb. In the general case, we have to
define the transition of the evolving comb using composition of bridges.

Again, the evolving comb can be built from the flow of bridges. Let
$(B_{s,t})_{t \ge 0}$ be a $\Lambda$-flow of bridges, for any time $r$ we
can build a nested interval-partition by setting 
\[
    (I^r_t)_{t \ge 0} = (I(B_{r, r+t}))_{t \ge 0}. 
\]
Then, using a similar argument as in the proof of \Cref{cor:combBridge}
we could show that the comb-valued process $(\Ii^r)_{r \ge 0} = (
(I^{-r}_t)_{t \ge 0} )_{r \ge 0}$ is distributed as the evolving comb
introduced above. As a remark this provides a c\`adl\`ag modification of
the evolving comb, and the Feller property of the flow of bridges ensures
that the evolving comb is a Feller process.

\section{Combs and ultrametric spaces}
\label{S:ultrametric}

In this section we envision combs as random UMS. Random metric
measure spaces have already been studied
in~\cite{greven_convergence_2006, gromov2007metric}.  A key working
hypothesis there is that the metric spaces are \emph{separable}.  In
terms of combs and coalescents, separability translates into absence of
dust (see \Cref{SS:separable}). While separability is a very
natural hypothesis when considering metric measure spaces,
restricting our attention to combs without dust seems arbitrary, as dust
has not raised any difficulty so far. In this section we provide a
straightforward extension of the framework of random metric measure
spaces to account for non-separable UMS.

Let us recall the heuristic of our approach and give a short outline of
this section. After a discussion on the assumptions of \Cref{def:UMS} in
\Cref{SS:generalUMS}, we define a topology on the space of UMS in
\Cref{SS:gromovWeak} by saying that a sequence of UMS converges if the
associated sequence of coalescents converges weakly as probability
measures.  In the separable case, the Gromov reconstruction theorem
(see Section~3.$\frac{1}{2}$.5 of~\cite{gromov2007metric}) ensures
that spaces that are indistinguishable have the support of their
measures in isometry. In general this result does not hold, we want to
obtain a similar result for general UMS. In order to do that, we
introduce in \Cref{SS:backbone} the notion of a backbone of a UMS. An
UMS can be seen as the leaves of a tree. This tree can be decomposed into
1) a separable part, that we call the backbone and 2) additional subtrees
grafted on this backbone. Even though these subtrees can have a complex
geometry, from a sampling standpoint they behave as star-trees (recall
\Cref{F:quotient}). In \Cref{SS:samplingEquivalence}, we show that if two
UMS are indistinguishable in the Gromov-weak topology, then they are
weakly isometric, in the sense that we can find an isometry between their
backbones and a measure-preserving correspondence between the star-trees
attached to them (see \Cref{prop:samplingEquivalence} for a rigorous
statement).  Finally \Cref{SS:combCompletion} is dedicated to showing
\Cref{cor:representationGeneral}, i.e.\ that we can always find a comb
metric space weakly isometric to a given UMS with complete
backbone, and \Cref{SS:separable} is devoted to showing
\Cref{cor:isometrySeparableUMS} and
\Cref{prop:representationSeparable} which are the analogous
results in the complete and separable case.

    \subsection{Discussion of Definition~\ref{def:UMS}}
    \label{SS:generalUMS}

Recall \Cref{def:UMS} of a UMS from the introduction. This
definition has two differences with the ``naive'' definition of a UMS
(that is, any ultrametric space endowed with a probability measure on its Borel
$\sigma$-field). First, we impose a measurability condition on the metric
$d$. Second we allow the measure $\mu$ to be defined on a $\sigma$-field
that is smaller than the usual Borel $\sigma$-field. In this section, we
start with a discussion of the assumptions of \Cref{def:UMS}.

\medskip

Let $\Pcoal$ denote the state space of coalescents, endowed with its
usual Borel $\sigma$-field (see~\cite{bertoin_2006}
Lemma~2.6), and let $\Pi$ be the application defined as
\[
    \Pi \colon
    \begin{dcases}
        U^{\N} \to \Pcoal\\
        (x_i)_{i \ge 1} \mapsto (\Pi_t)_{t \ge 0},
    \end{dcases}
\]
where
\[
    i \sim_{\Pi_t} j \iff d(x_i, x_j) \le t.
\]
The following simple lemma proves that the measurability of $d$ is the
minimal requirement so that the coalescent obtained by sampling from $U$
is a measurable process.

\begin{lemma}
    The application $\Pi$ is measurable when $U^{\N}$ is endowed with the
    product $\sigma$-algebra $\Ui^{\otimes \N}$ \tiff the distance $d$
is $\Ui \otimes \Ui$ measurable.
\end{lemma}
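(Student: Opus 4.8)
The plan is to establish the two implications separately, with the ``$\Leftarrow$'' direction being essentially bookkeeping and the ``$\Rightarrow$'' direction requiring a separating-class argument.

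\medskip

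First I would recall that the state space $\Pcoal$ carries the $\sigma$-field generated by the coordinate maps $(\Pi_t)_{t \ge 0} \mapsto \Pi_q$ for $q \in \Q_+$ (c\`adl\`ag paths are determined by their rational-time values), and that the $\sigma$-field on the space of partitions of $\N$ is itself generated by the family of events $\{i \sim_\pi j\}$ for $i, j \in \N$. Hence a map into $\Pcoal$ is measurable if and only if each of the (countably many) events $\{i \sim_{\Pi_q} j\}$ has measurable preimage. Under the map $\Pi$, the preimage of $\{i \sim_{\Pi_t} j\}$ is exactly $\Set{(x_k)_{k \ge 1} \in U^\N \suchthat d(x_i, x_j) \le t}$, which is the preimage of $\Set{(x, y) \in U^2 \suchthat d(x,y) \le t}$ under the coordinate projection $(x_k)_k \mapsto (x_i, x_j)$.

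\medskip

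For the ``if'' direction, suppose $d$ is $\Ui \otimes \Ui$ measurable. Then $\Set{(x,y) \suchthat d(x,y) \le t} \in \Ui \otimes \Ui$ for every $t$, and since the projection $(x_k)_k \mapsto (x_i, x_j)$ is measurable from $(U^\N, \Ui^{\otimes \N})$ to $(U^2, \Ui \otimes \Ui)$, the preimage lies in $\Ui^{\otimes \N}$. As this holds for all $i, j$ and all rational $t$, and these events generate the $\sigma$-field of $\Pcoal$ pulled back through $\Pi$, the map $\Pi$ is measurable. For the ``only if'' direction, suppose $\Pi$ is measurable. Fix $i \ne j$, say $i = 1$, $j = 2$, and let $p \colon U^\N \to U^2$ be the projection onto the first two coordinates; $p$ is surjective and measurable, and moreover it admits a measurable section (e.g.\ append a fixed point $x_0 \in U$ in all remaining coordinates), so a set $A \subseteq U^2$ lies in $\Ui \otimes \Ui$ as soon as $p^{-1}(A) \in \Ui^{\otimes \N}$. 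Applying this with $A = \Set{(x,y) \suchthat d(x,y) \le t}$, whose preimage $p^{-1}(A) = \Pi^{-1}(\Set{1 \sim_{\Pi_t} 2})$ is measurable by hypothesis, gives $\Set{(x,y) \suchthat d(x,y) \le t} \in \Ui \otimes \Ui$ for every $t \ge 0$. Since $d(x,y) = \inf \Set{t \ge 0 \suchthat d(x,y) \le t}$ and the sub-level sets $\Set{d \le t}$ for rational $t$ generate the Borel $\sigma$-field of $[0,\infty)$, the map $d$ is $\Ui \otimes \Ui$ measurable.

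\medskip

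I expect the only genuinely delicate point to be the ``only if'' direction, specifically the claim that measurability of $p^{-1}(A)$ forces measurability of $A$; this is exactly where one uses that $p$ has a measurable right inverse, which reduces the whole lemma to the tautology that a function is measurable iff the preimages of a generating class are measurable. Everything else is a routine identification of generators of the relevant product and path-space $\sigma$-fields.
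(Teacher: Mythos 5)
Your proof is correct and follows essentially the same route as the paper's: both directions reduce to the identity $\Pi^{-1}(\{i \sim_{\Pi_t} j\}) = \{(x_k)_{k\ge 1} : d(x_i,x_j)\le t\}$ combined with standard generating-class arguments for the product and path-space $\sigma$-fields. You are in fact slightly more explicit than the paper on the ``only if'' direction, where the paper just states this identity for $i=1$, $j=2$ and leaves implicit the measurable-section step you spell out.
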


\begin{proof}
    Notice that by definition of $\Pi$ we have
    \[
        \Set{d(x_1, x_2) \le t} = \Set{1 \sim_{\Pi_t} 2}
    \]
    which yields the ``only if'' part of the proof.
    
    \medskip

    To prove the converse implication, let $\pi$ be a partition of $[n]$
    and define
    \[
        R_{i,j} = 
        \begin{dcases}
            \Set{d(x_i, x_j) \le t} &\text{ if $i \sim_\pi j$,}\\
            \Set{d(x_i, x_j) > t} &\text{ if $i \not\sim_\pi j$.}
        \end{dcases}
    \]
    Then
    \[
        \Set{\Restriction{\Pi_t}{[n]} = \pi} = \bigcap_{i, j \le n} R_{i,j},
    \]
    which ends the proof.
\end{proof}

We now turn to the second point of the definition. Roughly speaking,
the Borel $\sigma$-field of a non-separable ultrametric space tends to be
large, and fewer measures can be defined on it. It is natural to ask
whether all coalescents (especially coalescents with dust) can be
represented as samples from ultrametric measure spaces, endowed with
their natural Borel $\sigma$-field. It turns out that this
question can be linked to a deep measure-theoretic problem known as the
Banach-Ulam problem. It can be formulated as follows: can we find a space
$X$ and a probability measure $\mu$ defined on the power set of $X$ such
that $\mu(\Set{x}) = 0$ for all $x \in X$? Point~(iii) of the following
proposition yields a positive answer to this question.

\begin{proposition} \label{prop:banachUlam}
    The following statements are equivalent.
    \begin{enumerate}
        \item[\rm(i)] There exists an exchangeable coalescent with dust that can be obtained as
            a sample from a Borel UMS.
        \item[\rm(ii)] Any exchangeable coalescent can be obtained as a sample from a Borel
            UMS.
        \item[\rm(iii)] There exists an extension of the Lebesgue measure to all
            subsets of $\R$.
    \end{enumerate}
\end{proposition}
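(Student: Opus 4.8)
The plan is to establish the equivalences by showing $(\text{iii}) \Rightarrow (\text{ii}) \Rightarrow (\text{i}) \Rightarrow (\text{iii})$. The implication $(\text{ii}) \Rightarrow (\text{i})$ is immediate since any coalescent with dust is in particular an exchangeable coalescent; one only needs to exhibit a single such coalescent, for instance the constant coalescent $\Pi_t = \Set{\Set{1},\Set{2},\dots}$ for all $t$, which has dust (every block is a singleton with zero asymptotic frequency).

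For $(\text{iii}) \Rightarrow (\text{ii})$, suppose $m$ is an extension of Lebesgue measure to the full power set of $\R$, hence also to the power set of $[0,1]$. Given an arbitrary exchangeable coalescent $(\Pi_t)_{t\ge 0}$, apply \Cref{TH:coalescentGeneral} to obtain a nested interval-partition $(I_t)_{t \ge 0}$ whose paintbox is distributed as $(\Pi_t)_{t \ge 0}$. Consider the comb metric space $([0,1], d_I)$. The obstruction to this being a Borel UMS (in the sense of \Cref{def:UMS} with $\Ui = \Bi([0,1])$) is precisely that the Borel $\sigma$-field of $([0,1], d_I)$ may be strictly larger than $\Is$ when $I_0$ has infinitely many interval components, so Lebesgue measure does not extend to it; but if we have $m$ at our disposal on \emph{all} subsets of $[0,1]$, then in particular $m$ restricts to a measure on $\Bi_I([0,1])$. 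One then checks, as in \Cref{lem:combUMS}, that $d_I$ is $\Bi_I([0,1]) \otimes \Bi_I([0,1])$-measurable, so $([0,1], d_I, \Bi_I([0,1]), m)$ is a Borel UMS, and sampling i.i.d.\ points from $m$ reproduces the paintbox on $(I_t)_{t\ge0}$ (here one uses that $m$ agrees with Lebesgue measure on $\Is$, so that the sampling law is unchanged), hence gives back $(\Pi_t)_{t \ge 0}$ in distribution.

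For $(\text{i}) \Rightarrow (\text{iii})$ — which I expect to be the main obstacle — suppose there is a Borel UMS $(U, d, \Bi(U), \mu)$ whose sampled coalescent $(\Pi_t)_{t\ge0}$ has dust. The presence of dust means that, with positive probability, one of the sampled points, say $X_1$, has the property that its block in $\Pi_t$ has asymptotic frequency zero for some $t > 0$; equivalently, the $\mu$-mass of the ball $\Set{y : d(X_1,y) \le t}$ is zero for some $t>0$. Fix such a $t$ and condition on the event of positive probability that this ball has zero mass. Inside the ambient ball $B = \Set{y : d(X_1,y) \le t}$, the ultrametric structure partitions $B$ into the balls of radius $s < t$; one uses the measurability assumptions to pick a scale $s$ at which $B$ decomposes into uncountably many such sub-balls each of zero $\mu$-mass but whose union carries positive mass — the dust forces this, since a ball of zero mass that is a countable union of positive-mass balls would have positive mass. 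Renormalising $\mu$ restricted to $B$ (or to an appropriate sub-ball) and pushing it forward along a bijection between the set of radius-$s$ sub-balls and a subset of $\R$ of full outer Lebesgue measure, one obtains a diffuse probability measure defined on \emph{every} subset of some $X \subseteq \R$; a standard argument (using that $[0,1]$ and $\R$ are Borel-isomorphic, and that a diffuse measure on all subsets of a set of cardinality $\mathfrak{c}$ yields one vanishing on singletons) then produces a positive answer to the Banach–Ulam problem, which by classical results (Ulam, Solovay) is equivalent to the existence of a total extension of Lebesgue measure. The delicate points here are (a) extracting the right scale $s$ and verifying that the family of sub-balls is genuinely uncountable with the stated mass properties — this is where the $\sigma$-field hypotheses of \Cref{def:UMS} and the measurability of $d$ are used — and (b) the transfer of the resulting total measure on an abstract index set back to $\R$, which is routine once the cardinality is $\mathfrak{c}$ but must be stated carefully.
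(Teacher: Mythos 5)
Your implications $(\mathrm{iii})\Rightarrow(\mathrm{ii})$ and $(\mathrm{ii})\Rightarrow(\mathrm{i})$ match the paper's proof: the paper likewise takes the comb metric measure space from \Cref{TH:coalescentGeneral} and equips $([0,1],d_I,\Bi_I([0,1]),\overline{\Leb})$ with the restriction of the total extension of Lebesgue measure. The problem is in $(\mathrm{i})\Rightarrow(\mathrm{iii})$, where your construction as written does not work. You fix a $t$ with $\mu\bigl(\Set{y : d(X_1,y)\le t}\bigr)=0$ and then try to decompose this ball $B$ into uncountably many zero-mass sub-balls ``whose union carries positive mass'' --- but the union of all sub-balls of $B$ is $B$ itself, which has zero mass by choice of $t$; no scale $s$ can rescue this. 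The correct object is not a single dust ball but the \emph{union over the whole space} of all zero-mass open balls of a fixed radius $\epsilon$: writing $(B^t_\alpha)_{\alpha\in A_t}$ for the zero-mass open balls of radius $t$, one has $\bigcup_{t>0}\bigcup_{\alpha\in A_t}B^t_\alpha = U\setminus\supp(\mu)$, which has positive mass by \Cref{lem:dustSupport} (dust $\Leftrightarrow$ $\mu(\supp(\mu))<1$), so continuity from below yields an $\epsilon$ with $\mu\bigl(\bigcup_{\alpha\in A_\epsilon}B^\epsilon_\alpha\bigr)>0$; the quotient of \emph{this} set by $x\sim y \Leftrightarrow d(x,y)<\epsilon$ is the uncountable index set you want.

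Two further points that you flag as ``delicate'' but do not resolve are actually where the hypotheses of the statement are consumed, so they cannot be waved through. First, to push $\mu$ forward to a measure on the \emph{full} power set of the quotient $X$ you must check that the preimage of an arbitrary subset of $X$ is $\mu$-measurable; this holds because such a preimage is a union of open balls, hence open, hence Borel --- and this is precisely where the ``Borel UMS'' assumption is used. Second, a diffuse probability measure on $\mathcal{P}(X)$ only gives a real-valued measurable cardinal; by Ulam's dichotomy this yields an extension of Lebesgue measure only if the cardinal is atomlessly measurable, i.e.\ only once you know $\Card(X)\le\mathfrak{c}$. Your phrase ``routine once the cardinality is $\mathfrak{c}$'' assumes exactly what needs proving. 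The paper extracts this bound from the measurability hypothesis on $d$: if $\Card(U)>\mathfrak{c}$ then the diagonal does not belong to $\mathcal{P}(U)\otimes\mathcal{P}(U)$, contradicting the $\Ui\otimes\Ui$-measurability of $d$. Without that step the implication $(\mathrm{i})\Rightarrow(\mathrm{iii})$ is not established.
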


This proposition is proved in \Cref{app:banachUlam}. A treatment of
the Banach-Ulam problem requires advanced tools from set theory. Let us 
briefly report some basic facts about it, and refer the interested reader 
to \cite{fremlin_real_1993}.

The last point of \Cref{prop:banachUlam} cannot be proved from the
sole axioms Zermelo-Fraenckel-Choice (ZFC) of set theory. This is a
consequence of the following two well-known facts. First, assuming
the continuum hypothesis (CH) and the axiom of choice it is possible
to prove that there exists no extension of the Lebesgue measure to
all subsets of $\R$, see e.g.\ the end of Section~3 of Chapter~1
of~\cite{billingsley_1995}. Second, ZFC and ZFC+CH are
equiconsistent. Thus a proof in ZFC that an extension of the Lebesgue
measure exists would show that ZFC+CH is inconsistent, and thus that ZFC
itself is inconsistent.

The previous paragraph shows that a positive answer to our question
cannot be obtained from ZFC alone. We can now ask if a negative
answer could be proved. However, as we will see it cannot be shown
that a negative answer to our question is not provable in ZFC.
According to Corollary~2E
in~\cite{fremlin_real_1993}, the theory ZFC+``there exists an extension
of the Lebesgue measure'' is equiconsistent with the theory ZFC+``there
exists a measurable cardinal''. Measurable cardinals are instances of
(strongly) inaccessible cardinals (see Theorem~1D
of~\cite{fremlin_real_1993}), and it is well-known that
it cannot be shown that the existence of inaccessible cardinals is
consistent relative to ZFC, see e.g.\ Theorem~12.12 in~\cite{jech_2003}.

Summing up these two consistency results, the situation is the
following: assuming that ZFC is consistent, we can safely suppose
that no coalescent with dust can be obtained as a sample from a Borel
UMS. However it cannot be shown from ZFC that assuming the converse
statement will not lead to a contradiction. According to
\cite{fremlin_real_1993} such a scenario is extremely unlikely, as many
consequences of ZFC+``there exists a measurable cardinal'' have been
explored and have not led to any contradiction so far, see the discussion
in Remark~1E(e) in~\cite{fremlin_real_1993}.

Obviously, all these considerations go far beyond the scope of the
current work. The approach we propose in this paper is to let the 
sampling measures be defined on a smaller $\sigma$-field, namely $\Ui$.
This relaxation allows us to find enough measures to represent all
coalescents as samples from ultrametric measure spaces, avoiding
the aforementioned set-theoretic issues. We simply hope that this short
discussion has led the reader to the conclusion that allowing sampling
measures to be defined on $\Ui$ is a more natural framework in which 
discussing coalescent theory on non-separable UMS than having to assume
that one of the statements of \Cref{prop:banachUlam} holds.

\medskip

Let us finally discuss the last point of \Cref{def:UMS}. This point
can be reformulated in terms of the ball $\sigma$-field which is defined
as follows.

\begin{definition} \label{def:ballField}
    Let $(U, d)$ be an ultrametric space. The ball $\sigma$-field
    denoted by $\ball$ is the $\sigma$-field induced by the open balls of 
    $(U, d)$, that is,
    \[
        \ball = \sigma( \Set{B(x, t) \suchthat x \in U, t > 0} ),
    \]
    where
    \[
        \forall x \in U, \forall t > 0,\; B(x, t) = \Set{y \in U \suchthat d(x,y) < t}.
    \]
\end{definition}

\begin{example} \label{ex:countable}
    Consider any set $U$ endowed with the metric
    \[
        \forall x,y \in U,\; d(x,y) = \Indic{x \ne y}.
    \]
    In this case $\ball$ is the countable-cocountable $\sigma$-field.
\end{example}

The last point of \Cref{def:UMS} can now be rephrased as
$\ball \subseteq \Ui \subseteq \Bi(U)$. It is important to notice that if
$\Bi(U)$ denotes the Borel $\sigma$-field of $(U, d)$, then 
$\ball \subseteq \Bi(U)$ always holds. In that sense, our definition
of a UMS should be seen as a generalization of the naive definition
as more measures can be defined on $\ball$ than on $\Bi(U)$. The
converse statement, i.e.\ that $\Bi(U) \subseteq \ball$, does
not hold in general, as \Cref{ex:countable} shows. Nevertheless,
in the important case where $(U, d)$ is separable, we have that
$\ball = \Bi(U)$, and the ultrametric $d$ is $\Bi(U) \otimes
\Bi(U)$-measurable. We thus recover the usual framework of 
metric measure spaces.

\begin{remark} \label{rem:ballField}
    The ball $\sigma$-field appears in other contexts
    where the underlying metric space is not separable, for example when
    considering the space of c\`adl\`ag functions with the uniform
    topology, as in~\cite{billingsley_1999}, Section~6 and Section~15.    
\end{remark}

    \subsection{The Gromov-weak topology}\label{SS:gromovWeak}

We now define the Gromov-weak topology on the space of UMS. Let $(U, d,
\Ui, \mu)$ be a UMS, and consider $(X_i)_{i \ge 1}$ an i.i.d.\ sequence
distributed as $\mu$. Recall that we define an exchangeable coalescent through
the set of relations
\[
    i \sim_{\Pi_t} j \iff d(X_i, X_j) \le t. 
\]
Alternatively, we can see this coalescent as a random pseudo-ultrametric
on $\N$ defined as
\[
    \forall i,j \ge 1,\; d_\Pi(i, j) = d(X_i, X_j). 
\]
Both objects encode the same information, as $d_\Pi$ can be recovered
from $(\Pi_t)_{t \ge 0}$ through the equality
\[
    \forall i,j \ge 1,\; d_\Pi(i, j) = \inf \Set{t \ge 0 \suchthat i \sim_{\Pi_t} j}. 
\]
The distribution of this pseudo-ultrametric is called the \emph{distance
matrix distribution} of the UMS. 

\begin{remark}
    From a topological point of view, a pseudo-ultrametric on $\N$ can be seen
    as an element of $\R_+^{\N\times\N}$ endowed with its
    product topology. Notice that in this case, the correspondence between
    pseudo-ultrametrics on $\N$ and coalescents outlined above is a homeomorphism.
\end{remark}

We use distance matrix distributions to define a topology on the space of
UMS. Consider a sequence $(U_n, d_n, \Ui_n, \mu_n)_{n \ge 1}$ of UMS, and
denote by $(\nu_n)_{n \ge 1}$ the associated sequence of distance matrix
distributions. We say that the sequence $(U_n, d_n, \Ui, \mu_n)_{n \ge
1}$ converges in the Gromov-weak topology to $(U, d, \Ui, \mu)$ if
$(\nu_n)_{n \ge 1}$ converges weakly to $\nu$, the distance matrix
distribution of $(U, d, \mu)$, in the space of probability measures on
$\R_+^{\N\times\N}$.

    \subsection{Backbone} \label{SS:backbone}

It is well known that any ultrametric space $(U, d)$ can be seen as the
leaves of a tree. This is illustrated in \Cref{F:quotient}.
Formally, we work on the space $U \times \R_+$ and consider the
pseudo-metric
\[
    d_T\big((x,s), (y,t)\big) = \max\Big(d(x,y) - \frac{s+t}{2}, \frac{\Abs{t-s}}{2}\Big).
\]
Let $T$ be the space $U \times \R_+$ quotiented by the equivalence relation
\[
    z \sim z' \iff d_T(z,z') = 0.
\] 
Then the space $(T, d_T)$ is a real tree
(see~\cite{evans_probability_2007}, Definition~3.15) whose
leaves can be identified with $(U, d)$.  

\begin{definition}[Backbone of $T$] \label{def:backbone}
    Define 
    \[
        f \colon \begin{cases}
            U \to \R_+\\
            x \mapsto \inf \Set{t \ge 0 \suchthat \mu(B(x, t)) > 0},
        \end{cases}
    \]
    (note that $f$ is measurable since $\ball \subseteq \Ui$) and
    let 
    \[
        \cS \defas \Set{(x,t) \in T \suchthat t \geq f(x)}.
    \] 
    The space $\cS$ will be
    referred to as the backbone of the tree $T$, and we denote by $d_{\cS}$ the
    distance $d_T$ restricted to $\cS$.
\end{definition}

Let us now motivate the next result that will be fundamental to our
approach.  In words, \Cref{prop:decomposition} states that even if the
underlying UMS is not separable, the backbone is always a separable tree.
Secondly, one can recover the whole tree from the backbone by grafting
some ``simple'' subtrees on the skeleton. By ``simple'', we mean that
each of those subtrees has the sampling properties of a star-tree. Let us
be more explicit about this last statement and discuss an example. 

Consider the space $[0, 1] \times \Set{0, 1}$ endowed with the
ultrametric
\begin{align*}
    \forall x,y \in [0, 1],\; \forall a,b \in \Set{0, 1},\; d\big( (x,a), (y,b) \big) = 
    \begin{cases}
        1 &\text{ if $x \ne y$},\\
        1/2 &\text{ if $x = y$ and $a \ne b$},\\
        0 &\text{ if $(x,a) = (y,b)$}.
    \end{cases}
\end{align*}
The space $([0, 1] \times \Set{0, 1}, d)$ is a star-tree where
each branch splits in two at height $1/2$ (see \Cref{F:bifurcating}
left panel), we call it the bifurcating star-tree. We 
endow this space with the product measure of the Lebesgue measure
on $[0, 1]$ and the uniform measure on $\Set{0, 1}$, defined on the 
usual product Borel $\sigma$-field. Consider
two independent random variables $(X, A)$ and $(Y, B)$ distributed
according to the above measure. We see that these two variables 
lie at distance $1/2$ \tiff $X = Y$ and $A \ne B$, which happens
with probability $0$. Thus, from a sampling point of view,
all points of the space lie at distance $1$ from one another,
i.e.\ the bifurcating star-tree is a star-tree (see \Cref{F:bifurcating}
right panel). 

\begin{figure}
    \center
    \includegraphics{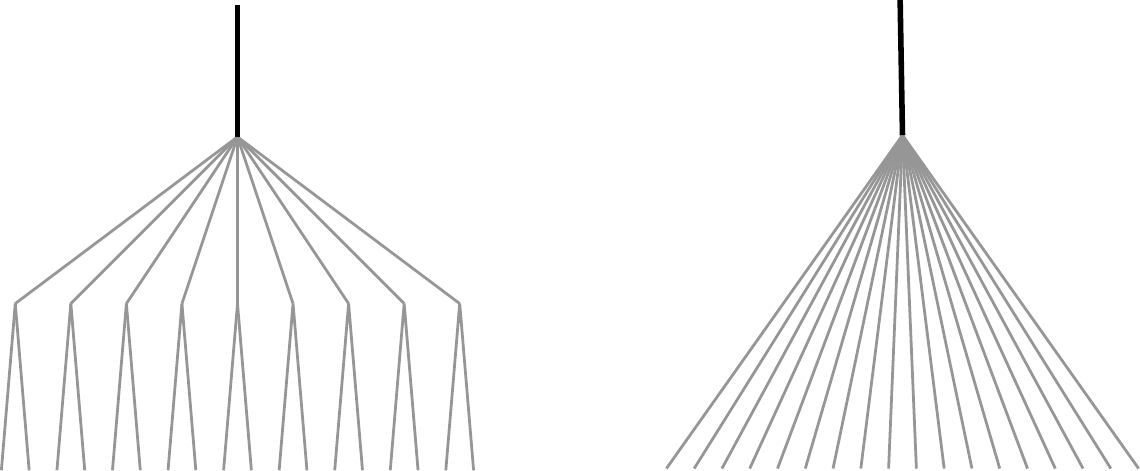}
    \caption{Left panel: The bifurcating star-tree. 
    Right panel: The bifurcating star-tree simplified
    according to the metric $\tilde{d}$. In both cases,
    the backbone is illustrated with a bold black line
    and the subtrees attached to it with thin grey lines.}
    \label{F:bifurcating}
\end{figure}

This examples illustrates the more general phenomenon that from the
measure point of view, the subtrees attached to the backbone 
behave like star-trees. More formally, consider an
UMS $(U, d, \Ui, \mu)$. We introduce the distance
\[
    \forall x,y \in U,\; \tilde{d}(x,y) = \Indic{x \ne y} \inf \Set{t \ge 0 \suchthat 
        d(x,y) \le t \text{ and } \mu(B(x,t)) > 0},
\]
which replaces each subtree attached to the backbone by 
a star-tree. The point (iii) of the following proposition
shows that the coalescent obtained by sampling from 
$(U, d, \Ui, \mu)$ is the same as the coalescent obtained
by sampling from $(U, \tilde{d}, \Ui, \mu)$. 

\begin{proposition}
    \label{prop:decomposition}
    \begin{enumerate}
        \item[\rm (i)] The space $(\cS, d_{\cS})$ is a separable real tree.

        \item[\rm(ii)] The map
        \begin{eqnarray*}
            \psi \colon 
            \begin{cases}
                (U, \Ui) \to (\cS, \Bi(\cS)) \\
                x \mapsto (x, f(x))  
            \end{cases}
        \end{eqnarray*}    
        is measurable and we define $\mu_{\cS} \defas \psi \star \mu$, the
        pushforward measure (on $(\cS, \Bi(\cS))$) of $\mu$ by $\psi$. In
        particular, the support of $\mu_{\cS}$ belongs to the subset of the
        backbone $\Set{(x,t) \in \cS \suchthat t=f(x)}$.

        \item[\rm(iii)] Consider an i.i.d.\ sequence $(X_i)_{i \ge 1}$ 
        distributed according to $\mu$. Then a.s.\ for all $i, j \ge 1$,
        $\tilde{d}(X_i, X_j) = d(X_i, X_j)$. 
\end{enumerate}
\end{proposition}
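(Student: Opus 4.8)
The plan is to establish the three points of \Cref{prop:decomposition} in the order (ii), (i), (iii), since the measurability of $\psi$ and the separability argument both feed into the final sampling statement. For (ii), I would first verify that $f$ is measurable: $f(x) = \inf\{t \ge 0 : \mu(B(x,t)) > 0\}$, and for fixed $t$ the map $x \mapsto \mu(B(x,t))$ can be controlled using that $\{(x,y) : d(x,y) < t\}$ is $\Ui \otimes \Ui$-measurable (point (i) of \Cref{def:UMS}) together with Fubini; alternatively one notes directly that $\{f < t\} = \bigcup_{s<t}\{x : \mu(B(x,s)) > 0\}$ and each of these is in $\ball \subseteq \Ui$. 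Measurability of $\psi \colon x \mapsto (x, f(x))$ then follows, and $\mu_\cS \defas \psi \star \mu$ is a well-defined Borel probability measure on $\cS$. The statement about the support of $\mu_\cS$ lying in $\{(x,t) \in \cS : t = f(x)\}$ is immediate: $\psi$ maps into this set by construction, so the pushforward is carried by (the closure of) its image, which stays in this closed subset of $\cS$.

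For (i), the real-tree structure of $(\cS, d_\cS)$ is inherited from $(T, d_T)$ (a subtree of a real tree containing all the relevant segments is again a real tree), so the content is separability. Here I would argue that $\cS$ is the closure of the union, over all the ``charged balls'', of the segments joining their roots, and that there are only countably many genealogically distinct charged balls: for each rational level $t$, the balls of radius $t$ that have positive $\mu$-mass are disjoint sets of positive measure, hence at most countably many. Taking the countable family of their ``tips'' $(x, f(x))$ — or rather picking one representative point per such ball — and then all the points along the finitely-generated segments between them, one gets a countable dense subset of $\cS$. The key observation making this work is that every point of the backbone lies on a segment from the root down toward some charged ball; this is essentially the definition of $\cS$ as $\{(x,t): t \ge f(x)\}$, since $f(x) < \infty$ forces $x$ to be approximable by points in charged balls.

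For (iii) — which I expect to be the main point of the proposition and the one the later sections use — fix $i \ne j$ and let $X_i, X_j$ be i.i.d.\ $\mu$. Clearly $\tilde d(X_i,X_j) \ge d(X_i,X_j)$ is false in general; rather $\tilde d \le$ is automatic once one reaches a charged ball, and one always has $\tilde d(x,y) \ge d(x,y)$ when $x \ne y$ since the infimum in $\tilde d$ ranges only over $t \ge d(x,y)$. So it suffices to show $\tilde d(X_i, X_j) \le d(X_i, X_j)$ almost surely, i.e.\ that $\mu\big(B(X_i, d(X_i,X_j))\big) > 0$ a.s.\ on $\{X_i \ne X_j\}$. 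Let $t = d(X_i,X_j)$; then $X_j \in B(X_i, t)$ is false (the ball is open of radius $t$), but the point is that conditionally on $X_i$, the value $d(X_i, X_j)$ is distributed as $d(X_i, Y)$ for $Y \sim \mu$ independent, and the event $\{d(X_i, Y) = t, \ \mu(B(X_i,t)) = 0\}$ has the property that, integrating over $Y$, the set $\{y : \mu(B(X_i, d(X_i,y))) = 0\}$ has $\mu$-measure zero. Indeed this set is contained in $\bigcup_{s \in Q}\{y : d(X_i,y) \le s, \ \mu(B(X_i,s)) = 0\}$ over a suitable countable dense $Q$, and each such set sits inside a ball of radius $\le s$ of zero $\mu$-mass, hence is $\mu$-null; a countable union of $\mu$-null sets is $\mu$-null. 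Thus a.s.\ $\tilde d(X_i, X_j) = d(X_i, X_j)$ for each fixed pair, and a countable intersection over all pairs $i \ne j$ gives the claim. The delicate step is the measure-theoretic bookkeeping in this last paragraph — making sure that ``$\mu(B(X_i, d(X_i, \cdot))) = 0$'' defines a $\mu$-null set, which requires reducing the uncountable family of radii to a countable dense one and invoking the monotonicity of $t \mapsto \mu(B(X_i,t))$ together with its right-continuity on the relevant range.
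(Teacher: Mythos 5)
Parts (i) and (ii) of your proposal are essentially sound. For (i) you use the same countable family as the paper (one representative point per positive-mass ball of rational radius, placed at that rational height). For (ii) you take a different route from the paper: the paper deduces measurability of $\psi$ from the identity $\psi^{-1}(C(x,t)) = B(x,t)$ together with a lemma (proved in an appendix) that the clades of a separable rooted real tree generate its Borel $\sigma$-field, whereas you go through measurability of $f$. Your route can be completed --- once (i) is known, $\Bi(\cS)$ is generated by open $d_{\cS}$-balls, and the preimage under $\psi$ of such a ball is described by the two $\Ui$-measurable functions $f$ and $d(x_0,\cdot)$ --- but the sentence ``measurability of $\psi$ then follows'' is precisely where the work lies, so you should spell it out (note in particular that it uses the separability established in (i), so the order (ii)-then-(i) you announce does not quite work).

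Part (iii) contains a genuine error. For $x \neq y$ one has $\tilde{d}(x,y) = \max\bigl(d(x,y), f(x)\bigr)$, so $\tilde{d}(x,y) = d(x,y)$ \tiff $f(x) \le d(x,y)$, a \emph{non-strict} inequality; you replace it by the strict one when you claim it suffices to show $\mu\bigl(B(X_i, d(X_i,X_j))\bigr) > 0$ a.s.\ on $\{X_i \neq X_j\}$. That intermediate claim is false in general: for the star tree $U = [0,1]$ with $d(x,y) = 1$ for $x \neq y$ and $\mu = \Leb$ on the ball $\sigma$-field, every open ball $B(x,1) = \{x\}$ has mass zero, so $\mu\bigl(B(X_i, d(X_i,X_j))\bigr) = 0$ a.s.\ --- yet $\tilde{d} = d$ there and (iii) holds. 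The same confusion between open and closed balls breaks your covering argument: the set $\{y : d(X_i,y) \le s,\ \mu(B(X_i,s)) = 0\}$ is the \emph{closed} ball of radius $s$, which is not contained in the null open ball $B(X_i,s)$ and can have full mass (again the star tree, $s=1$). The correct argument only has to rule out the strict event $\{d(X_i,X_j) < f(X_i)\}$, i.e.\ $X_j \in B(X_i, f(X_i))$; since $B(x,f(x)) = \bigcup_{n} B(x, f(x) - 1/n)$ is a countable increasing union of balls that are null by the very definition of $f$, it is itself null, and integrating in $y$ then in $x$ gives the result. This is exactly the paper's double-integral computation, and your proof needs to be repaired along these lines.
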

\begin{proof}
    We start by proving (i). The fact that $\cS$ is a real tree can be
    checked directly from the definition. We now show that it is
    separable. Let $t \in \Q_+$, there are only countably many balls of
    $(U, d)$ of radius $t$ and positive mass, let us label them
    $(B^t_i)_{i \ge 1}$. For any $t \in \Q_+$ and $i \ge 1$, let $x^t_i
    \in B^t_i$. Let us now consider the collection $((x^t_i,t);\, t \in
    \Q_+, i \ge 1)$. First, since $\mu(B(x^t_i,t))>0$, it follows from
    the definition that $t\geq f(x^t_i)$, and thus $((x^t_i,t); t \in
    \Q_+, i \ge 1)$ is a countable collection of $\cS$ and it remains to
    show that this collection is dense in $\cS$.
    
    Let $\epsilon > 0$ and let $(x, s) \in U \times \R_+$ be in $\cS$.
    We can find $t \in \Q_+$
    such that $t > s \ge f(x)$ and $t - s < \epsilon$.
    By definition of $f$, $\mu(B(x, t)) > 0$, and we can find $i$ such that
    $B(x, t) = B^t_i$. Then $d(x, x^t_i) < t$
    and
    \[
        d(x, x^t_i) - \frac{t + s}{2} < d(x, x^t_i) - t + \frac{\epsilon}{2} < \frac{\epsilon}{2}
    \]
    and thus $d_T((x, s), (x^t_i, t)) < \epsilon$. This shows that 
    the collection is dense and that the space is separable. 

    \medskip
   
    We now turn to the proof of (ii). Let $(x,t) \in \cS$, we denote by
    \[
        C(x,t) = \Set{(y,s) \in \cS \suchthat d_T((x,t), (y,t)) = 0}
    \]
    the \emph{clade} generated by $(x,t)$. In a genealogical interpretation,
    $C(x,t)$ is the progeny of $(x,t)$ i.e.\ the subtree that has $(x,t)$ as
    its MRCA. Notice that this notion can be defined
    similarly on any rooted tree (here the root is an ``infinite point'' 
    obtained by letting $t \to \infty$). It is clear that $\psi^{-1}(C(x,t)) = B(x,t)$. 
    Our results is now immediate from the fact that the clades of a rooted separable
    tree induce the Borel $\sigma$-field of the tree. A proof of this fact is
    given in \Cref{app:tree}. 

    \medskip

    We now prove (iii). It is sufficient to prove that a.s.\ $d(X,Y) = \tilde{d}(X, Y)$
    for $X$ and $Y$ two independent variables distributed as $\mu$. Notice that for any $x, y \in U$, 
    $d(x, y) \le \tilde{d}(x, y)$. Thus the probability that 
    $d(X, Y) \ne \tilde{d}(X, Y)$ can be written
    \begin{align*} 
        \Prob{d(X,Y) \ne \tilde{d}(X, Y)} &= \iint \Indic{d(x, y) < \tilde{d}(x, y)} \mu(\mathrm{d}x)\mu(\mathrm{d}y)\\
        &= \int \mu(\mathrm{d}x) \int \mu(\mathrm{d}y) \Indic{d(x, y) < f(x)}
        = 0,
    \end{align*}
    where the last equality can be seen by writing
    \[
        \Set{x,y \in U \suchthat d(x,y) < f(x)} = 
        \bigcup_{\epsilon > 0} \Set{x,y \in U \suchthat d(x,y) < f(x) - \epsilon}
    \]
    and noticing that each event of the union in the right-hand side
    has null mass.
\end{proof}

\begin{remark}[Backbone and marked metric measure space]
    \label{rem:gufler}
    An object similar to the backbone appears in~\cite{gufler_representation_2016}
    using the framework of marked metric measure spaces introduced
    in~\cite{depperschmidt_marked_2011}. We can interpret
    the backbone as a marked metric measure space where the metric 
    space is $U$ endowed with the backbone metric
    \[
        \bar{d}(x,y) = d_S\big( (x,f(x)), (y,f(y)) \big) 
    \]
    and the mark space is $\R_+$. According to this correspondence,
    backbones are examples of elements of the set $\hat{\mathbb{U}}$
    defined in~\cite{gufler_representation_2016}.
    In~\cite{gufler_representation_2016} the marked metric measure space
    corresponding to the backbone is either considered as given, or built 
    as the completion of the ultrametric measure space on $\N$
    corresponding to the distance matrix distribution. The novelty of the
    present work is that we start from a general UMS and simplify it to
    obtain the backbone. This approach requires to identify the
    measurability assumptions to be made on UMS to avoid the problems
    that are discussed in \Cref{SS:generalUMS}.

    Moreover, the link between backbones and marked metric measure spaces
    enables us to use the work of~\cite{depperschmidt_marked_2011}.
    For instance, this provides a metric, the marked Gromov-Prohorov 
    metric, that metrizes the Gromov-weak topology on UMS and
    ensures that the topology is separable. 
\end{remark}

    \subsection{Isomorphism between backbones}
        \label{SS:samplingEquivalence}

The aim of this section is to introduce the notion of 
isomorphism between backbones and to prove our reformulation of the
Gromov reconstruction theorem.

\begin{definition}\label{def:isometry-backbone}
    Let $(U,d,\Ui,\mu)$ and $(U',d',\Ui',\mu')$ be two UMS with respective backbones
    $(\cS,\mu_{\cS})$ and $(\cS',\mu'_{\cS})$. We say that $\Phi$
    is an isomorphism from $\cS$ to $\cS'$ if:
    \begin{enumerate}
        \item[\rm (i)] The map $\Phi$ is a measure-preserving isometry from $\cS$ to $\cS'$.
        \item[\rm (ii)] For every $(x, t) \in \cS$, there exists $x' \in U'$ such that $\Phi\big((x,t)\big) = (x',t)$,
        i.e.\ $\Phi$ preserves the second coordinate. 
    \end{enumerate}
    We say that two UMS are in weak isometry when they have isomorphic backbones.
\end{definition}

Recall \Cref{prop:weakIsometry} from the introduction. We want to
show the following reformulation of \Cref{prop:weakIsometry}. In words,
this states that having the same distance matrix distribution is
equivalent to being weakly isometric.

\begin{proposition} \label{prop:samplingEquivalence}
    Let $(U,d,\Ui,\mu)$ and $(U',d',\Ui',\mu')$ be two UMS with respective backbones
    $(\cS,\mu_{\cS})$ and $(\cS',\mu'_{\cS})$. We suppose that the two backbones
    are complete metric spaces. Then the two spaces $(\cS,\mu_{\cS})$
    and $(\cS',\mu'_{\cS})$ are isomorphic \tiff the distance matrix distribution
    associated $(U, d, \Ui, \mu)$ and $(U',d', \Ui', \mu')$ are identical.
\end{proposition}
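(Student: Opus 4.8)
The plan is to recast the statement in the language of marked metric measure spaces (see \Cref{rem:gufler}), apply the marked version of Gromov's reconstruction theorem to the supports of $\mu_{\cS}$ and $\mu'_{\cS}$, and then lift the resulting isomorphism of supports to the whole backbone by exploiting the real-tree structure of $\cS$.

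Both implications rely on two elementary facts about an arbitrary UMS $(U,d,\Ui,\mu)$ with i.i.d.\ $\mu$-sample $(X_i)_{i\ge1}$. \emph{(a)} Since $\mu(B(X_i,t))=0$ for $t<f(X_i)$ while $\mu(B(X_i,t))>0$ for $t>f(X_i)$, and there are infinitely many samples, one has a.s.\ and simultaneously for all $i$ that $\inf_{j\ne i} d(X_i,X_j)=f(X_i)$; hence the mark $f(X_i)$ is a measurable function of the distance matrix. \emph{(b)} By \Cref{prop:decomposition}(iii) one has $d(X_i,X_j)\ge f(X_i)\vee f(X_j)$ a.s., so with $\psi(x)=(x,f(x))$ the definition of $d_T$ gives
\[
    d(X_i,X_j) = d_{\cS}\big(\psi(X_i),\psi(X_j)\big) + \tfrac{f(X_i)+f(X_j)}{2} \qquad\text{a.s.}
\]
Together, (a) and (b) show that the distance matrix distribution of $U$ and the \emph{marked} distance matrix distribution of the backbone $(\cS,d_{\cS},\rho,\mu_{\cS})$ — where $\rho$ is the second-coordinate mark, this being the marked metric measure space of \Cref{rem:gufler} — are measurable functions of one another.

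Suppose now that $U$ and $U'$ have the same distance matrix distribution; by the previous paragraph the two backbones then have the same marked distance matrix distribution. By \Cref{prop:decomposition}(i) the backbones are separable, and they are complete by hypothesis, so the marked Gromov reconstruction theorem (\cite{depperschmidt_marked_2011}, see also its use in \cite{gufler_representation_2016}) yields a measure- and mark-preserving bijective isometry $\phi$ between $\supp(\mu_{\cS})$ and $\supp(\mu'_{\cS})$; since $\phi$ preserves marks and, by \Cref{prop:decomposition}(ii), the supports lie in the sets $\Set{(x,t)\suchthat t=f(x)}$, the map $\phi$ already satisfies condition~(ii) of \Cref{def:isometry-backbone} on the supports. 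To extend $\phi$ to all of $\cS$, the key observation is that
\[
    \cS = \overline{\bigcup_{z\in\supp(\mu_{\cS})}\gamma_z}, \qquad
    \gamma_z := \Set{w\in\cS \suchthat d_{\cS}(z,w)=\tfrac{\rho(w)-\rho(z)}{2}},
\]
where $\gamma_z$ is the geodesic ray from $z$ towards the root: indeed if $(x,t)\in\cS$ and $t>f(x)$ then $\mu(B(x,t))>0$, so the clade of $(x,t)$ meets $\supp(\mu_{\cS})$ at some $z$ and then $(x,t)\in\gamma_z$, while points $(x,f(x))$ are limits of such points. Each ray $\gamma_z$ is intrinsic to the marked metric structure; $\phi$ extends canonically to an isometry between the sub-real-trees spanned by the two supports (which preserves MRCAs), and one then extends it up the root-ward rays by matching the mark $\rho$, checking that the prescriptions obtained from different support points agree on their overlaps (a finite subtree of a real tree is determined by its pairwise distances, and here also by the marks). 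After extending by uniform continuity to the complete space $\cS$, and running the same argument on $\phi^{-1}$, one obtains a surjective isometry $\Phi$ that pushes $\mu_{\cS}$ forward to $\mu'_{\cS}$ (both being carried by the supports where $\Phi=\phi$) and preserves the second coordinate by construction; hence the backbones are isomorphic.

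For the converse, let $\Phi$ be an isomorphism of backbones and take $(X_i)_{i\ge1}$ i.i.d.\ $\mu$. Then $(\Phi(\psi(X_i)))_{i\ge1}$ is i.i.d.\ with law $\Phi\star\mu_{\cS}=\mu'_{\cS}=\psi'\star\mu'$ (the last equality by \Cref{prop:decomposition}(ii)), so it has the same law as $(\psi'(X'_i))_{i\ge1}$ for an i.i.d.\ $\mu'$-sample $(X'_i)$. Applying fact~(b) inside $U'$ and using that $\Phi$ is a mark-preserving isometry then gives $(d'(X'_i,X'_j))_{i,j}\overset{(\mathrm{d})}{=}(d(X_i,X_j))_{i,j}$, i.e.\ the distance matrix distributions coincide. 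The main obstacle is the extension step of the first implication: establishing $\cS=\overline{\bigcup_z\gamma_z}$ and verifying that $\phi$ lifts to a well-defined isometry of the entire backbone preserving both the measure and the second coordinate. One may alternatively bypass it by invoking Gufler's representation of the backbone as a canonical completion of the marked distance matrix on $\N$ (\Cref{rem:gufler}), which depends only on the marked distance matrix distribution.
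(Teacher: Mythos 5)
Your proof is correct, but it reaches the isomorphism of backbones by a genuinely different route from the paper. The paper never invokes the marked metric measure space machinery in its proof: it adapts Gromov's reconstruction argument from scratch, using the coincidence of the pushforward laws $D\star\mu_{\cS}^{\otimes\N}=D'\star\mu'^{\otimes\N}_{\cS}$ to produce a \emph{single countable pair} of equidistributed sequences $(x_i)$, $(x_i')$ with identical distance matrices satisfying \eqref{eq:backboneDistance}--\eqref{eq:backboneDistance2}, defines $\Phi$ on the rootward rays $\Set{(x_i,t)\suchthat t\ge f(x_i)}$ above those points, proves density of this set in $\cS$, and extends by continuity using completeness; indeed the remark following the paper's proof explicitly observes that a direct appeal to Theorem~1 of \cite{depperschmidt_marked_2011} ``would only provide an isometry between the supports of the backbones,'' which is precisely the limitation your ray-extension step is designed to overcome. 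Structurally the two arguments are parallel --- both rest on the paper's \Cref{lem:distanceEquivalence} (your facts (a) and (b)) and on the density of rootward rays over a full-measure set plus completeness --- but yours outsources the core reconstruction to the marked theorem at the cost of an extra consistency check (that the prescriptions $\Phi$ obtained from different support points agree on overlapping rays), which is genuine but routine since the MRCA height $h=d_{\cS}(z,z')+\tfrac{\rho(z)+\rho(z')}{2}$ is an invariant of the mark-preserving isometry; the paper sidesteps this by working with one countable family from the outset. What your approach buys is modularity and a cleaner conceptual link to \Cref{rem:gufler}; what the paper's buys is self-containment and a lighter verification burden. Two minor points to tighten: the phrase ``sub-real-trees spanned by the two supports'' is not quite the right object (the span of pairwise geodesics stops at the global MRCA height, whereas the rays $\gamma_z$ go all the way up --- your primary formulation via $\gamma_z$ is the correct one), and the mark-preservation delivered by \cite{depperschmidt_marked_2011} is a priori only $\mu$-a.e.\ on the product space, which upgrades to pointwise preservation on the support because the mark $\rho$ is $2$-Lipschitz on $\cS$.
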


Let us compare this result to the original result
from~\cite{gromov2007metric}.  In the separable case, if two UMS share
the same coalescent then the supports of their measures are in
isometry. Thus two separable spaces that are indistinguishable in the
Gromov-weak topology share the exact same metric structure. The situation
is rather different in the general case. Even if two UMS share the same
coalescent, they can have rather different metric structures, think of
the bifurcating star-tree and the star-tree of \Cref{F:bifurcating}. What
\Cref{prop:samplingEquivalence} states is that in this case there is only
a correspondence between coarsenings of the UMS, i.e.\ the backbones on
which all the subtrees are replaced by star-trees. This result is not
surprising as the distance matrix distribution only contains the
information of a countable number of points, which is not enough to
explore the fine metric structure of the UMS.

\medskip

The ``only if'' part of \Cref{prop:samplingEquivalence} is a direct
consequence of the following lemma, which shows that the distance matrix 
distribution of a UMS can be recovered from an i.i.d.\ sequence of points of the
backbone. 
\begin{lemma} \label{lem:distanceEquivalence}
    Let $(X_i)_{i \ge 1}$ be an i.i.d.\ sequence in $U$ sampled according
    to $\mu$. Then a.s.
    \begin{align} \label{eq:backboneDistance}
        \forall i,j \ge 1,\; d(X_i, X_j) = d_{\cS}\big( (X_i, f(X_i)), (X_j, f(X_j)) \big) + \frac{f(X_i) + f(X_j)}{2}
    \end{align}
    and
    \begin{align} \label{eq:backboneDistance2}
        \forall i \ge 1,\; f(X_i) = \inf \Set{t \ge 0 \suchthat \text{$\Set{j \suchthat d(X_j, X_i) \le t}$ \mbox{\rm is infinite}}}.
    \end{align}
\end{lemma}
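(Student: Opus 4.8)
The plan is to prove~\eqref{eq:backboneDistance2} first, and then derive~\eqref{eq:backboneDistance} from it together with the definition of $d_{\cS}$. For~\eqref{eq:backboneDistance2}, fix $i$ and set $r = f(X_i) = \inf\Set{t \ge 0 \suchthat \mu(B(X_i,t)) > 0}$. Conditionally on $X_i$, the other points $X_j$ ($j \ne i$) are i.i.d.\ $\mu$, so for any $t$ the number of indices $j$ with $d(X_j, X_i) \le t$ is infinite a.s.\ if $\mu(\Set{y : d(X_i,y) \le t}) > 0$ and is a.s.\ finite (in fact the set is $\{i\}$ up to finitely many terms, and actually empty among $j \ne i$) if $\mu(\Set{y : d(X_i,y) \le t}) = 0$, by a Borel--Cantelli / law of large numbers argument. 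The one subtlety is the interplay between the closed ball $\Set{y : d(X_i,y) \le t}$ appearing in~\eqref{eq:backboneDistance2} and the open ball $B(X_i,t)$ in the definition of $f$: by the ultrametric property, $\mu(\Set{y : d(X_i,y) \le t}) = \mu\big(\bigcup_{s \le t} B(X_i, s)\big) \le \mu\big(B(X_i, t')\big)$ for any $t' > t$, and conversely $B(X_i, t) \subseteq \Set{y : d(X_i,y) \le t}$, so the two notions give the same threshold $r$. Hence the infimum in~\eqref{eq:backboneDistance2} equals $r = f(X_i)$, establishing~\eqref{eq:backboneDistance2}. Note this already shows that $f(X_i)$ is a.s.\ a measurable function of the distance matrix $(d(X_k, X_\ell))_{k,\ell}$.

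For~\eqref{eq:backboneDistance}, first handle the diagonal-type case $i = j$: both sides are $0$ since $d_{\cS}((x,f(x)),(x,f(x))) = 0$. For $i \ne j$, write out the definition of the tree pseudo-metric:
\[
    d_T\big((X_i, f(X_i)), (X_j, f(X_j))\big) = \max\Big(d(X_i, X_j) - \tfrac{f(X_i) + f(X_j)}{2},\ \tfrac{\Abs{f(X_i) - f(X_j)}}{2}\Big).
\]
Adding $\tfrac{f(X_i)+f(X_j)}{2}$ to both terms inside the max, the identity~\eqref{eq:backboneDistance} is equivalent to the claim that a.s.
\[
    d(X_i, X_j) \ge \max\big(f(X_i), f(X_j)\big).
\]
This is exactly the content of part~(iii) of \Cref{prop:decomposition}: there it is shown that a.s.\ $\tilde d(X_i, X_j) = d(X_i, X_j)$, and by definition $\tilde d(x,y) \ge f(x)$ for $x \ne y$ (since $\tilde d(x,y) = \inf\Set{t \ge 0 : d(x,y) \le t,\ \mu(B(x,t)) > 0} \ge f(x)$), and by symmetry $\tilde d(x,y) = \tilde d(y,x) \ge f(y)$ as well. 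So a.s.\ $d(X_i, X_j) = \tilde d(X_i, X_j) \ge \max(f(X_i), f(X_j))$, which gives~\eqref{eq:backboneDistance}. A union bound over the countably many pairs $(i,j)$ turns the pairwise a.s.\ statements into a single a.s.\ statement valid for all $i, j$ simultaneously.

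The main obstacle is the careful handling of the open-versus-closed-ball distinction in~\eqref{eq:backboneDistance2} and making sure the Borel--Cantelli argument is airtight: one must argue that for $t < f(X_i)$ the closed ball $\Set{y : d(X_i, y) \le t}$ also has zero $\mu$-mass (which follows from the ultrametric identity $\Set{y : d(X_i,y) \le t} = \bigcup_{n} B(X_i, t + 1/n)^c{}^c$... more simply, $\Set{y: d(X_i,y)\le t} \subseteq B(X_i, t')$ for every $t' > t$, so its mass is at most $\inf_{t' > t}\mu(B(X_i,t')) = 0$ by right-continuity of $t' \mapsto \mu(B(X_i,t'))$ and the definition of $f$), and that for $t > f(X_i)$ the open ball $B(X_i,t)$ has positive mass so infinitely many $X_j$ land in it by the second Borel--Cantelli lemma (independence) or simply the law of large numbers applied to $\Indic{X_j \in B(X_i,t)}$. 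Everything else is bookkeeping and an appeal to \Cref{prop:decomposition}(iii).
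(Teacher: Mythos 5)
Your proof is correct and follows essentially the same route as the paper's: you reduce~\eqref{eq:backboneDistance}, via the explicit formula for the tree pseudo-metric, to the a.s.\ inequality $d(X_i,X_j) \ge \max\big(f(X_i), f(X_j)\big)$, which you extract from \Cref{prop:decomposition}(iii) exactly as the paper does (the paper phrases this as a two-case analysis on whether the backbone distance vanishes), and you obtain~\eqref{eq:backboneDistance2} from the definition of $f$ together with the law of large numbers applied to the balls $B(X_i,t)$, which is precisely the paper's one-line justification. Your handling of the open-versus-closed-ball issue in~\eqref{eq:backboneDistance2} is in fact more detailed than the published argument.
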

\begin{proof}
    We know from \Cref{prop:decomposition} that for any
    $i, j \ge 1$, $\tilde{d}(X_i, X_j) = d(X_i, X_j)$ almost surely. Suppose that
    $(X_i, f(X_i))$ and $(X_j, f(X_j))$ lie at distance $0$ in the backbone,
    then $\tilde{d}(X_i, X_j) = f(X_i) = f(X_j)$ and~\eqref{eq:backboneDistance}
    holds. Otherwise notice that $d(X_i, X_j) \ge f(X_i)$ and $d(X_i, X_j) \ge f(X_j)$.
    Thus
    \[
        d(X_i, X_j) - \frac{f(X_i) + f(X_j)}{2} \ge \frac{\Abs{f(X_i) - f(X_j)}}{2}
    \]
    and 
    \[
        d_S\big( (X_i, f(X_i)), (X_j, f(X_j)) \big) = d(X_i,X_j) - \frac{f(X_i) + f(X_j)}{2}.
    \]

    The second point of the lemma is a direct consequence of the definition
    of $f$ and of the observation that
    if $\mu(B(x, t)) > 0$, then a.s.\ there are infinitely many $(X_i)_{i \ge 1}$
    that belong to this ball.
\end{proof}

It remains to show the converse proposition, i.e.\ that if two UMS 
are sampling equivalent then they are in weak isometry. The
proof we give is an adaptation of Gromov reconstruction
theorem from Section~3.$\frac{1}{2}$.6 of~\cite{gromov2007metric}.

\begin{proof}[Proof of \Cref{prop:samplingEquivalence}]
    We say that a sequence $(x_i, t_i)_{i \ge 1}$ in $\cS$ is equidistributed
    if for any $A \in \Si$,
    \[
        \lim_{n \to \infty} \frac{1}{n} \sum_{i = 1}^n \Indic{(x_i, t_i) \in A} = \mu_{\cS}(A). 
    \]
    A well-known fact is that the empirical measure of an i.i.d.\ sample
    converges weakly to the sampling measure. Thus, a.s.\ an i.i.d.\ sequence
    is equidistributed.

    Consider the map
    \[
        D \colon
        \begin{cases}
            \cS^{\N} \to \R^{\N\times\N} \\
            (x_i, t_i)_{i \ge 1} \mapsto ( d_{\cS}\big( (x_i, t_i), (x_j, t_j) \big) + \frac{t_i + t_j}{2})_{i,j \ge 1}.
        \end{cases}
    \]
    and let $D'$ be the analogous map for $U'$. Then
    \Cref{lem:distanceEquivalence} shows that the pushforward measure
    $D \star \mu_{\cS}^{\otimes \N}$ is the distance matrix distribution associated to $U$.
    Similarly $D' \star \mu'_{\cS}{}^{\otimes \N}$ is the distance matrix distribution 
    associated to $U'$. As we have supposed that the two distance matrix distributions 
    coincide, we can find a sequence $(x_i)_{i \ge 1}$ in $U$ and a corresponding
    sequence $(x'_i)_{i \ge 1}$ in $U'$ that have the same distance matrix, i.e.\ such
    that
    \[
        D((x_i, f(x_i))_{i \ge 1}) = D'((x'_i, f(x'_i))_{i \ge 1}).
    \]
    We can suppose that these sequences are equidistributed and fulfill
    equalities~\eqref{eq:backboneDistance} and~\eqref{eq:backboneDistance2} as
    all these events have probability $1$. Using~\eqref{eq:backboneDistance2} we have 
    \[
        \forall i \ge 1,\; f(x_i) = f(x'_i)
    \]
    and then using~\eqref{eq:backboneDistance} we obtain
    \[
        \forall i,j \ge 1,\; d_{\cS}\big( (x_i, f(x_i)), (x_j, f(x_j)) \big) = 
        d'_{\cS}\big( (x'_i, f(x'_i)), (x'_j, f(x'_j)) \big).
    \]

    We now extend this correspondence to an isomorphism between the
    backbones. Let $i \ge 1$ and $t \ge f(x_i)$, we set
    \[
        \Phi( (x_i, t) ) = (x'_i, t).
    \]
    It is clear that $\Phi$ is an isomorphism from $\Set{(x_i,t)
    \in \cS \suchthat t \ge f(x_i), i \ge 1}$ to $\Set{(x'_i,t) \in \cS'
    \suchthat t \ge f(x'_i), i \ge 1}$. It is now sufficient to show that
    this set is dense to end the proof, by extending $\Phi$ to $\cS$ by
    continuity. To see that, let $(x,t) \in \cS$. As $t \ge f(x)$, we know
    that $\mu(\Set{y \in U \suchthat d(x,y) \le t+\epsilon}) > 0$ for any
    $\epsilon > 0$. Writing
        \[
        \Set{y \in U \suchthat d(x,y) \le t+\epsilon} = \Set{y \in U \suchthat 
        d_{\cS}\big((x, t+\epsilon), (y, t+\epsilon)\big) = 0},
    \]
    as $(x_i, f(x_i))_{i \ge 1}$ is equidistributed, we see that we can find $(x_i, f(x_i))$ such that 
    $(x_i, t+\epsilon) = (x, t+\epsilon)$. Moreover, it is immediate
    that $t+\epsilon \ge f(x_i)$, and we have
    \[
        d_{\cS}\big((x_i, t+\epsilon), (x, t)\big) 
        = d_{\cS}\big((x, t+\epsilon), (x, t)\big) = \epsilon.
    \]

    The fact that $\Phi$ is measure preserving holds because we have chosen equidistributed
    sequences. 
\end{proof}

\begin{remark}
    According to the correspondence between backbones and marked metric
    measure spaces outlined earlier,
    \Cref{prop:samplingEquivalence} is similar to the more general
    Theorem~1 in~\cite{depperschmidt_marked_2011}, which is itself an
    adaptation of the Gromov reconstruction theorem. However as we only
    address the case of backbones, we can be more specific. A direct
    application of Theorem~1 in~\cite{depperschmidt_marked_2011} would
    only provide an isometry between the supports of the backbones
    whereas here we obtain a global isometry.
\end{remark}

\begin{remark}
    The results of this section show that the backbone of a UMS contains
    the same information as the coalescent associated to that UMS. Thus
    properties of the coalescent can be read off from properties of the
    backbone. In particular, we can make precise an informal conjecture
    formulated in the context of exchangeable hierarchies
    in~\cite{Forman2017}, and addressed in~\cite{Forman2018}, concerning
    a nice decomposition of the sampling measure $\mu$.  Indeed, the
    sampling measure on the backbone is naturally decomposed into its
    atoms, its diffuse part on the set $\Set{(x,t) \in \cS \suchthat t =
    0}$ of leaves of $\cS$ at height $0$ and the remaining diffuse part.
    This decomposition induces three qualitatively different behaviors of
    the coalescent. In short, points sampled in the atomic part form
    singletons of the coalescent that all merge at the same time, an
    event called ``broom-like explosion'' in~\cite{Forman2017}. Second,
    points sampled in $\Set{(x,t) \in \cS \suchthat t = 0}$ always belong
    to an infinite block of the coalescent for $t > 0$, they form the
    ``iterative branching part''.  Finally points sampled in the remaining
    part of the backbone are singletons of the coalescent that
    continuously merge with existing blocks. This behavior is referred to
    as ``erosion''.
\end{remark}

    \subsection{Comb metric measure space, completion of the backbone} \label{SS:combCompletion}

An important assumption of \Cref{prop:samplingEquivalence} is that the
backbones of the UMS we consider are complete metric spaces. We will show
in this section that the UMS associated to a comb enjoys this property up
to the addition of a countable number of points. Let us start with two
examples of combs illustrating that the backbone of a comb metric measure
space is not in general complete.

First, consider the comb associated to the diadic space. Let $0 < t < 1$
and let $k$ be the only integer such that $t \in \COInterval{2^{-(k+1)}, 2^{-k}}$. 
We set
\[
    I^2_t = \bigcup_{0 \le i \le 2^{k+1}-1} \OpenInterval{i 2^{-(k+1)}, (i+1) 2^{-(k+1)}}
\]
and for $t \ge 1$ we set
\[
    I^2_t = \OpenInterval{0, 1}.
\]
The diadic comb is illustrated in \Cref{F:diadic}. Now consider the comb
metric $d^2_I$ associated to this comb, and let $x = 2^{-k}$ for some $k
\ge 1$. Consider a non-decreasing sequence $(x_n)_{n \ge 1}$ that
converges to $x$. It is not hard to see that $(x_n)_{n \ge 1}$ is Cauchy
for $d^2_I$ but does not admit a limit. 

Let us discuss a second example which is not separable. Consider the following comb 
\[
    I'_t = 
    \begin{cases}
        \emptyset &\text{ if $t < 1/2$}\\
        I^2_{t-1/2} &\text{otherwise}.
    \end{cases}
\]
This comb is illustrated in \Cref{F:diadic}. It is rather clear that the
backbone associated to $(I'_t)_{t \ge 0}$ is isometric to the backbone
obtained from $(I^2_t)_{t \ge 0}$ (notice that here the isometry is not
an isomorphism, as the backbone associated to $(I'_t)_{t \ge 0}$ is
``shifted above by $1/2$'' from that of $(I^2_t)_{t \ge 0}$). The backbone
is not complete for the same reason as above. The following proposition
shows that up to the addition of a countable number of points, we can
assume that the backbone associated to a comb metric space is complete. 

\begin{proposition} \label{prop:combCompletion}
    Consider the comb metric $d_I$ associated to a comb $(I_t)_{t \ge 0}$. 
    We can find a countable set $F$ and an extension $\bar{d}_I$ of
    $d_I$ to $[0, 1] \cup F$ such that $\bar{d}_I$ is ultrametric and the
    backbone associated to $([0, 1] \cup F, \bar{d}_I, \Is, \Leb)$ is
    complete.
\end{proposition}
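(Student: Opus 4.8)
The plan is to make the backbone complete by explicitly adding, for each ``hole'' in the comb metric, a single limit point, and to check that only countably many such holes exist. First I would describe what goes wrong: a Cauchy sequence $(x_n)$ in $([0,1], d_I)$ without a limit must (up to extracting, and using the ultrametric property) be monotone in $[0,1]$; say it is non-decreasing and converges in the Euclidean sense to some $x \in [0,1]$. The sequence fails to converge for $d_I$ precisely when $d_I(x_n, x)$ does not tend to $0$, which by the formula $d_I(x_n,x)=\sup_{[x_n,x]}f_I$ happens exactly when $\inf_n \sup_{[x_n,x]} f_I = \ell_x > 0$ for every sequence approaching $x$ from the left, i.e.\ when $x$ is a left endpoint of an interval component of $I_t$ for all $t>\ell_x^-$ but not for $t<\ell_x$ — in comb-function language, when $x$ is the left endpoint of a ``jump from the left'' of the function $t\mapsto I_t$. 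Symmetrically there is the right-hand version. For each such $x$ and each side I would adjoin a new point $x^-$ (resp.\ $x^+$), and extend $\bar d_I$ by declaring $\bar d_I(x^-, y)$ to be the $d_I$-limit of $d_I(x_n, y)$ along any left-approaching sequence (this limit exists by the ultrametric inequality and is independent of the sequence), and $\bar d_I(x^-, x) = \ell_x$. One checks routinely that $\bar d_I$ is still an ultrametric: the only new triangle inequalities involve the new points, and they reduce to the comb identity $\bar d_I(x^-, y) = \sup_{(x, y]} f_I$ together with the ultrametric inequality on $[0,1]$.

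Second, I would argue that the set $F$ of added points is countable. A point $x$ needs a left-face $x^-$ added only if $\ell_x>0$, and $\ell_x$ is the infimum of $t$ such that the interval component of $I_t$ containing a left-neighbourhood of $x$ has $x$ as an interior point rather than a left endpoint; equivalently there is a rational $q$ with $0<q<\ell_x$ such that $x$ is the left endpoint of an interval component of $I_q$. For fixed rational $q$, $I_q$ has at most countably many interval components, hence at most countably many left endpoints, so the set of $x$ with $\ell_x > q$ and requiring a left-face is countable; taking the union over rational $q>0$ keeps it countable. The same reasoning applies on the right. Thus $F$ is countable, and $\bar d_I$ restricted to $[0,1]$ is $d_I$ by construction.

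Third, I would verify completeness of the backbone of the extended space $([0,1]\cup F, \bar d_I, \Is, \Leb)$. Note first that adjoining $F$ does not change the sampling measure (it is still $\Leb$ on $\Is$, and the new points carry no mass), so the function $f$ of \Cref{def:backbone} is unchanged on $[0,1]$ and extends to $F$ by the same formula; the backbone $\cS$ in the extended space is the old backbone together with the ``faces'' $\{(x^\pm, t) : t \ge f(x^\pm)\}$. A Cauchy sequence in $(\cS, d_{\cS})$ projects, via the second coordinate, to a Cauchy (hence convergent) sequence in $\R_+$, so one may assume the heights converge; the spatial coordinates then form a $d_I$-Cauchy sequence in $[0,1]\cup F$, which by the very construction of the added faces now has a limit in $[0,1]\cup F$. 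Chasing the definition of $d_T$ shows the original sequence converges in $\cS$. The one point needing care is that a Cauchy sequence might ``want'' to converge to a point whose height is strictly below $f$ of its spatial limit — but the backbone only contains points with $t \ge f(x)$, and the distance $d_{\cS}((x_n,t_n),(x_m,t_m)) = d_I(x_n,x_m) - \tfrac{t_n+t_m}{2}$ forces $t_n \ge f(x_n)$ throughout, so the limiting height is at least the limiting value of $f$, which is itself at least $f$ of the spatial limit by lower semicontinuity of $f$ along the approaching direction (this is exactly the $\ell_x$ computation above).

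The main obstacle I expect is precisely this last compatibility check: making sure that the height $\ell_x = \bar d_I(x^-, x)$ assigned to the new face is consistent with the value $f$ takes at $x^-$ in the backbone sense, so that no Cauchy sequence in $\cS$ is left without a limit and no spurious limit is introduced. This is a bookkeeping issue about the interplay between the three notions ($d_I$-Cauchy on $[0,1]$, the jump structure of $t\mapsto I_t$, and the height function $f$), rather than a deep difficulty; the countability argument and the ultrametric verification are routine once the construction of the faces is pinned down, and indeed the excerpt signals that this explicit face-adding construction is carried out in \Cref{SS:combCompletion}.
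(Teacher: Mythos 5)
Your construction is essentially the paper's: you adjoin one ``face'' point per one-sided hole (the paper's sets $\Ri$ and $\Li$ of persistent right/left endpoints of interval components), you define the extended distance as the limit of $d_I(x_n,\cdot)$ along approaching sequences, and you get countability by locating a rational time at which the point is an endpoint of an interval component. One slip before the main point: your explicit formula $\bar d_I(x^-,y)=\sup_{(x,y]}f_I$ for the left-approaching face is wrong; the limit of $\sup_{[x_n,y]}f_I$ as $x_n\uparrow x$ equals $\sup_{[x,y]}f_I$, which dominates $f_I(x)$ (in the diadic comb with $x=1/2$, $y=3/4$, your formula gives $1/2$ instead of $1$). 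The half-open formula belongs to the face approached from the other side. Since your primary definition of $\bar d_I(x^-,\cdot)$ is the limit itself, this is only a bookkeeping error, of a piece with the left/right mix-up in your description of when a face is needed.

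The genuine gap is in the completeness step. You assert that once the heights of a $d_{\cS}$-Cauchy sequence $(x_n,t_n)$ converge, ``the spatial coordinates form a $d_I$-Cauchy sequence in $[0,1]\cup F$''. This is false: $d_{\cS}\big((x_n,t_n),(x_m,t_m)\big)\to 0$ only forces $d_I(x_n,x_m)\le \tfrac{t_n+t_m}{2}+o(1)$, i.e.\ the spatial coordinates eventually lie in a common ball of radius roughly $t=\lim t_n$, not at small mutual $d_I$-distance (any spatial sequence at constant height $1$ in the diadic comb is $d_{\cS}$-Cauchy while $d_I(x_n,x_m)$ can remain equal to $1$). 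So completeness of the backbone does not reduce to completeness of $([0,1]\cup F,\bar d_I)$, and your argument gives no limit point for such sequences. The paper's proof instead extracts a subsequence whose spatial coordinates converge monotonically in the \emph{Euclidean} topology of $[0,1]$, say increasing to $x$, deduces from the Cauchy property only the one-sided bound $\lim_{\epsilon\to 0}\sup_{[x-\epsilon,x)}f_I\le t$, and then identifies the limit as $(x_r,t)$ if $x\in\Ri$ and as $(x,t)$ otherwise (using that $\lim_{\epsilon\to 0}\sup_{[x-\epsilon,x)}f_I=f(x)$ when $x\notin\Ri$). Your ``one point needing care'' paragraph addresses a different and less central compatibility issue; the missing ingredients are the Euclidean-compactness extraction and the inequality above.
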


\begin{remark}
    Here we have implicitly extended the Lebesgue measure to $[0, 1] \cup F$
    by giving zero mass to $F$. 
\end{remark}

A proof of this result is given in \Cref{app:combCompletion}. The 
proof of \Cref{cor:representationGeneral} now directly follows
from the various results we have shown.

\begin{proof}[Proof of \Cref{cor:representationGeneral}.]
    Let $(U, d, \Ui, \mu)$ be a UMS with complete backbone, and
    let $(\Pi_t)_{t \ge 0}$ be the associated coalescent. Using
    \Cref{TH:coalescentGeneral} we can find a nested interval-partition
    whose associated coalescent is $(\Pi_t)_{t \ge 0}$. We can now use
    \Cref{prop:combCompletion} to find a comb metric measure space whose
    backbone is complete which has the same distance matrix distribution
    as $(U, d, \Ui, \mu)$. Using \Cref{prop:samplingEquivalence} ends the
    proof. 
\end{proof}

\begin{figure}
    \includegraphics[width=\textwidth]{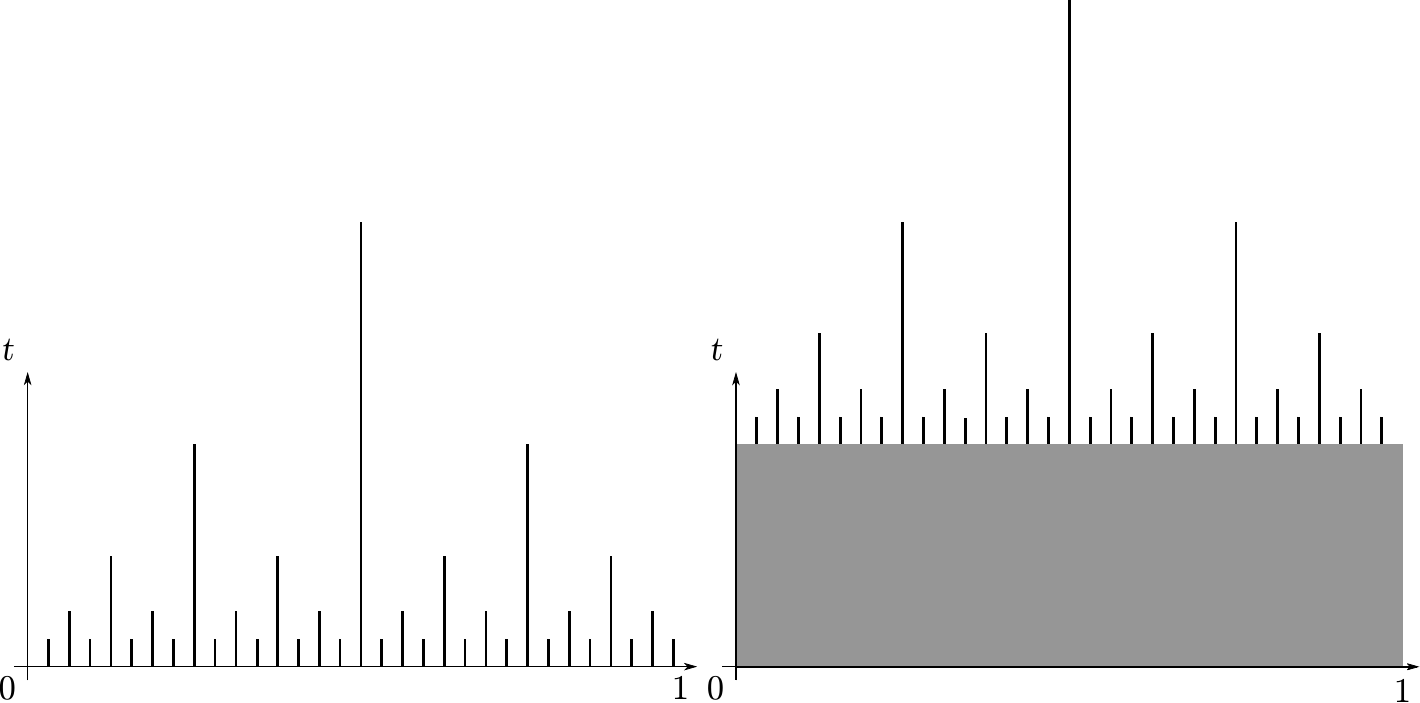}
    \caption{Left panel: The diadic comb. Right panel: The comb
    $(I'_t)_{t \ge 0}$.} \label{F:diadic}
\end{figure}

    \subsection{The separable case}
    \label{SS:separable}

In this section we consider the case of separable UMS and 
prove \Cref{cor:isometrySeparableUMS} and
\Cref{prop:representationSeparable}. The former result states that 
the weak isometry between backbones can be reinforced to an isometry
between the supports of the measures in the case of separable complete
UMS. The latter states that any complete separable ultrametric space is
isometric to a properly completed comb metric space. Let us start with
\Cref{cor:isometrySeparableUMS}.

\begin{proof}[Proof of \Cref{cor:isometrySeparableUMS}]
    Let $(I_t)_{t \ge 0}$ be a nested interval-partition without dust,
    and consider the corresponding comb metric measure space $([0, 1],
    d_I, \Is, \Leb)$. The quotient space of $\Set{f_I = 0}$ by the
    equivalence relation $x \sim y$ \tiff $d_I(x, y) = 0$ is a separable
    ultrametric space. Moreover, it is isometric to the subset $\Set{(x,
    t) \in \cS \suchthat t = 0}$ of the backbone $\cS$ of $([0, 1], d_I,
    \Is, \Leb)$. Thus the quotient space of $(\Set{f_I = 0}, d_I)$ can be
    turned into a complete ultrametric space by adding a countable number
    of points as in \Cref{prop:combCompletion}, we denote this completion
    by $(U_I, d_I)$ as in the introduction. As $(I_t)_{t \ge 0}$ has no dust, we
    have $\Leb(\Set{f_I = 0}) = 1$. Thus $U_I$ can be endowed with the
    pushforward measure of the restriction of $\Leb$ to $\Set{f_I = 0}$,
    defined on the Borel $\sigma$-field of $(U_I, d_I)$. It is a
    probability measure, let us denote it by $\Leb$. The space $(U_I,
    d_I, \Leb)$ is a separable complete Borel UMS that has the same
    distance matrix distribution as the original comb metric measure
    space $([0, 1], d_I, \Is, \Leb)$.

    Let $(U, d, \Ui, \mu)$ be a complete separable UMS. By restricting
    our attention to $\supp(\mu)$ we can assume without loss of
    generality that $\supp(\mu) = U$. According to
    \Cref{TH:combRepresentation} we can find a nested interval-partition 
    $(I_t)_{t \ge 0}$ and a corresponding comb metric measure
    space $([0, 1], d_I, \Is, \Leb)$ whose distance matrix distribution is 
    equal to that of $(U, d, \Ui, \mu)$. As $\supp(\mu) = U$, for each
    $t > 0$ we have $\mu(B(x,t)) > 0$. If $(\Pi_t)_{t \ge 0}$ denotes
    the coalescent obtained by sampling from $(U, d, \Ui, \mu)$, this
    shows that for each $t > 0$ all the blocks of $\Pi_t$ have positive
    asymptotic frequency. Thus $(I_t)_{t \ge 0}$ has no dust, and let
    $(U_I, d_I, \Leb)$ be the completion of the comb metric measure space
    as above. Then $(U, d, \mu)$ and $(U_I, d_I, \Leb)$
    are two complete separable metric measure spaces (in the usual sense)
    whose distance matrix distributions are equal. Thus, the 
    Gromov reconstruction theorem (see Section~3.$\frac{1}{2}$.6
    of~\cite{gromov2007metric}) proves that we can find a
    measure-preserving isometry between $(U_I, d_I, \Leb)$ and $(U, d,
    \mu)$, which ends the proof.
\end{proof}

We now turn to the proof of \Cref{prop:representationSeparable}. We
will need the following lemma.

\begin{lemma} \label{lem:measure}
    Any separable ultrametric space $(U, d)$ can be endowed with a measure
    $\mu$ on its Borel $\sigma$-field such that $\supp(\mu) = U$. 
\end{lemma}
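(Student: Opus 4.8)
The plan is to use separability in the most direct possible way: build a purely atomic probability measure carried by a countable dense set. The ultrametric hypothesis will not be used at all, and in fact the same argument works for any separable metric space.

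First I would fix a countable dense subset $\{x_n\}_{n \ge 1}$ of $U$, which exists by separability, and set
\[
    \mu = \sum_{n \ge 1} 2^{-n} \delta_{x_n}.
\]
This is a Borel probability measure, and since $(U, d)$ is separable we have $\Ui = \Bi(U)$, so by the remark following \Cref{def:UMS} the quadruple $(U, d, \Bi(U), \mu)$ is automatically a UMS. The one point to verify is that $\supp(\mu) = U$. Recall that $\supp(\mu)$ is the set of points all of whose open neighbourhoods have positive $\mu$-mass (equivalently, the complement of the union of all open $\mu$-null sets). Given $x \in U$ and an open set $O \ni x$, choose $\epsilon > 0$ with $B(x, \epsilon) \subseteq O$; by density there is an $n$ with $x_n \in B(x, \epsilon)$, hence $\mu(O) \ge \mu(\{x_n\}) \ge 2^{-n} > 0$. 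As $x$ is arbitrary, $\supp(\mu) = U$, which is the assertion.

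There is no real obstacle here — the statement is elementary and the proof is essentially the observation above. What is worth flagging is only that the conclusion is fragile under strengthening: one cannot hope for $\mu$ to be diffuse in general (take $U$ a single point, or more generally $U$ with isolated points), so the atomic construction is the natural one. For the uses of this lemma in \Cref{cor:isometrySeparableUMS} and \Cref{prop:representationSeparable} this is exactly what is required, since there one only needs full support, equivalently that $\mu(B(x,t)) > 0$ for all $x$ and all $t > 0$, so that the coalescent obtained by sampling from $(U, d, \Bi(U), \mu)$ has no dust and the comb representation applies.
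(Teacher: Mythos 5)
Your proof is correct, but it takes a genuinely different route from the paper's. The paper does not place atoms on a dense set: it uses the ultrametric structure, under which the open balls of radius $1/n$ partition $U$ into countably many pieces, and inductively distributes mass down this nested sequence of partitions (splitting the mass of each ball of radius $1/n$ among the at most countably many balls of radius $1/(n+1)$ it contains), then invokes Carath\'eodory's extension theorem to obtain a Borel measure; full support is immediate since every ball receives positive mass. This is the ``visibility measure'' of \cite{lambert_comb_2017}, which the authors flag in a remark, and it is canonical in the sense of depending only on the branching structure of the tree of balls. Your construction $\mu = \sum_n 2^{-n}\delta_{x_n}$ on a countable dense set is shorter, needs no extension theorem, and, as you note, works for an arbitrary separable metric space without using ultrametricity. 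It also suffices for every downstream use of the lemma: \Cref{cor:isometrySeparableUMS} and \Cref{prop:representationSeparable} only require $\supp(\mu)=U$, i.e.\ $\mu(B(x,t))>0$ for all $x$ and $t>0$, so that the sampled coalescent has no dust and the Gromov reconstruction theorem applies to the metric space itself. What your approach gives up is only the canonicity of the visibility measure; your closing observation that one cannot demand a diffuse measure in general (isolated points) is accurate and explains why an atomic construction is unavoidable in full generality.
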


\begin{proof}
    We build the measure by induction. 
    For $n = 1$, as the space is separable there are only countably many balls of radius $1$.
    If there are finitely many such balls, say $k$ balls $B_1, \dots, B_k$, 
    we define 
    \[
        \mu(B_i) = \frac{1}{k}.
    \]
    Else we can find an enumeration of the balls, $(B_i)_{i \ge 1}$, and we define 
    \[
        \mu(B_i) = \Big( \frac{1}{2} \Big)^i.
    \]
    Suppose that we have defined $\mu(B)$
    for any ball of radius $1/n$. Given a ball $B^n$ of radius
    $1/n$ there are at most countably many balls $(B^{n+1}_i)_{i \ge 1}$
    of radius $1/(n+1)$
    such that $B^{n+1}_i \subset B^n$. Similarly if there are $k$ balls
    we define
    \[
        \mu(B^{n+1}_i) = \frac{\mu(B^n)}{k}
    \]
    and if there are countably many balls we define
    \[
        \mu(B^{n+1}_i) = \mu(B^n) \Big( \frac{1}{2} \Big)^i.
    \]
    A simple application of Caratheodory's extension theorem now provides
    a probability measure $\mu$ defined on the Borel $\sigma$-field of $(U, d)$ 
    that extends this measure. It is straightforward from the construction
    that $\supp(\mu) = U$.
\end{proof}

\begin{remark} Note that a similar construction was mentioned
    in~\cite{lambert_comb_2017}, where the resulting measure was referred
    to as the ``visibility measure''.
\end{remark}

\begin{proof}[Proof of \Cref{prop:representationSeparable}]
    Let $(U, d)$ be a separable complete UMS. Using \Cref{lem:measure}
    we can find a measure $\mu$ such that $\supp(\mu) = U$. An appeal
    to \Cref{cor:isometrySeparableUMS} now proves the result.
\end{proof}

\bigskip
\noindent
{\bf Acknowledgements.} The authors thank \textit{Center for Interdisciplinary Research
in Biology} (CIRB, Coll\`ege de France) for funding. They also greatly
thank two anonymous reviewers for helpful comments.

\bibliographystyle{imsart-nameyear}
\bibliography{bibli_exchangeable_comb}

\appendix

\section{Exchangeable hierarchies} \label{app:hierarchies}

The aim of this section is to recall some results derived in~\cite{Forman2017}
and discuss the link they have with the current results.
Again, we recall that the present work should not be viewed as stemming from the work
of~\cite{Forman2017}, but should be viewed as an independent approach
bearing similarities that we now expose.

\medskip

Let $X$ be an infinite space. A hierarchy on $X$ is a collection $\Hi$ of subsets of $X$
such that
\begin{enumerate}
    \item for $x \in X$, $\Set{x} \in \Hi$, $X \in \Hi$ and $\emptyset \in \Hi$;
    \item given $A, B \in \Hi$, then $A \cap B$ is either
    $A$, $B$ or $\emptyset$.
\end{enumerate}
Any ultrametric space encodes a hierarchy that is obtained by
``forgetting the time''. More precisely, if $(U, d)$ is an ultrametric space, then 
\[
    \Hi = \Set{B(x,t),\; x \in X,\; t \ge 0} \cup \Set{\Set{x},\; x \in X} \cup \Set{X, \emptyset}
\]
is a hierarchy. The hierarchy $\Hi$ encodes the genealogical structure of $(U, d)$,
i.e.\ the order of coalescence of the families, but not the coalescence times.
\begin{remark}
    The converse does not hold, there exist hierarchies that cannot be obtained as the
    collection of balls of an ultrametric space. 
    For example, consider a space $X$ with cardinality greater than
    the continuum, endowed with a total order $\le$, and define 
    \[
        \Hi = \Set{ \Set{y \suchthat y \le x} \suchthat x \in X}     
        \cup \Set{\Set{x},\; x \in X} \cup \Set{X, \emptyset}.
    \]
\end{remark}

The main object studied in~\cite{Forman2017} are exchangeable hierarchies on 
$\N$. Let $\sigma$ be a permutation of $\N$, and $\Hi$ be a hierarchy on
$\N$. Then $\sigma$ naturally acts on $\Hi$ as
\[
    \sigma(\Hi) = \Set{\sigma(A),\; A \in \Hi}.
\]
A random hierarchy on $\N$ (see~\cite{Forman2017} for a definition of 
the $\sigma$-field associated to hierarchies) is called exchangeable
if for any permutation $\sigma$,
\[
    \sigma(\Hi) \overset{(\mathrm{d})}{=} \Hi.
\]
In a similar way that exchangeable coalescents are
obtained by sampling in UMS, exchangeable hierarchies are
obtained by sampling in hierarchies on measure spaces. 
Let $(X, \mu)$ be a probability space, and consider a hierarchy
$\Hi$ on $X$. An exchangeable hierarchy $\Hi'$ can be generated out of an 
i.i.d.\ sequence $(X_i)_{i \ge 1}$ by defining
\[
    \Hi' = \Set{ \Set{i \ge 1 \suchthat X_i \in A},\; A \in \Hi}.
\]
Again, an exchangeable hierarchy can be obtained from an exchangeable
coalescent by forgetting the time. Let $(\Pi_t)_{t \ge 0}$ be an
exchangeable coalescent. Then
\[
    \Hi = \Set{ B,\; \text{$B$ is a block of $\Pi_t$, $t \ge 0$} }
\]
is an exchangeable hierarchy. 

The main results in~\cite{Forman2017} show that any exchangeable 
hierarchy can be obtained by sampling from 1) a random ``interval hierarchy''
on $\COInterval{0, 1}$ and 2) a random real-tree. The link with our results
now seems straightforward. 

\medskip

An interval hierarchy on $\COInterval{0, 1}$ is a hierarchy $\Hi$
on $\COInterval{0, 1}$ such that all non-singleton elements
of $\Hi$ are intervals. Again, an interval hierarchy can be
obtained from a nested interval-partition $(I_t)_{t \ge 0}$
by forgetting the time. The family of sets
\begin{align*}
    \Hi = &\Set{I \suchthat \text{$I$ is an interval component of $I_t$}, t \ge 0}\\ 
          &\cup \Set{ \Set{x}, x \in \COInterval{0, 1} } \\
          &\cup \Set{\COInterval{0, 1}, \emptyset}
\end{align*}
is an interval hierarchy. Theorem~4 in~\cite{Forman2017} states that any exchangeable
hierarchy on $\N$ can be obtained by sampling in a random
interval hierarchy. This is the direct equivalent of our
\Cref{TH:coalescentGeneral} that states that any exchangeable coalescent
can be obtained by sampling in a random nested interval-partition. 

\medskip

Consider a measure rooted real-tree $(T, d, \rho, \mu)$,
it can be endowed with a partial order $\preceq$ such
that $y \preceq x$ if $x$ is an ancestor of $y$
(see~\cite{evans_probability_2007}). Then, the fringe subtree
of $T$ rooted at $x \in T$ is defined as the set
\[
    F_T(x) = \Set{y \in T \suchthat y \preceq x},
\]  
it is the set of the offspring of $x$. 
The natural hierarchy associated to $(T, d, \rho)$ is
\[
    \Hi = \Set{F_T(x),\; x \in T}. 
\]
Theorem~5 in~\cite{Forman2017} states that any exchangeable
hierarchy can be obtained by sampling in the hierarchy
associated to a random measure rooted real-tree. 
In our framework, we have seen that a nested interval-partition 
can be seen as an ultrametric space, and in \Cref{SS:combCompletion}
we have seen how this ultrametric space is embedded in a real-tree. 
Again we have proved here the reformulation of Theorem~5 from~\cite{Forman2017}.

\medskip

In a subsequent work, one of the authors has introduced the notion
of mass-structural isomorphism \citep{Forman2018}. In a nutshell,
two trees that are mass-structural isomorphic induce the same
exchangeable hierarchy. In our framework, two spaces have 
the same coalescent \tiff their backbones are isomorphic.
Thus, the mass-structural isomorphism is replaced here by the simpler
notion of isomorphism.

\medskip

Overall, the two works are very similar in the sense that they obtain 
the same kind of representation results for exchangeable hierarchies
and exchangeable coalescents. However the techniques used in the proofs
are different, e.g.\ the work of~\cite{Forman2017} relies on spinal 
decomposition whereas the present work relies on nested compositions. 
Moreover, as an ultrametric space contains ``more information'' than
a hierarchy, our results are not trivially implied by the results 
in~\cite{Forman2017}, but constitute an extension of their work.

Finally, we wish to stress two things. First, most of the difficulties
that \Cref{S:ultrametric} deal with stem from the fact that we
consider non-separable metric spaces. These issues and the work that is
done here heavily relies on the theory of metric spaces. Seeing genealogies
as metric spaces is only possible if we keep the information on the times of coalescence,
which is not the case when considering hierarchies.

Second, keeping this information allows us to study genealogies
as time-indexed stochastic processes. It is a necessary step to study
the Markov property of the combs associated to $\Lambda$-coalescents
as in \Cref{S:lambda}. This creates a direct link between the 
present work and the very rich litterature on $\Lambda$-coalescents
and coalescence theory that is not present in~\cite{Forman2017}. Moreover,
this provides a new approach to the question of dynamical genealogies,
with the introduction of the dynamical comb.

\section{Independence of the nested interval-partitions and the sampling variables}
\label{app:detailsFinetti}

Consider an exchangeable nested composition $(\Ci_t)_{t \ge 0}$,
and let $(I_t)_{t \in \Q_+}$ be the nested interval-partition
obtained by applying \Cref{TH:gnedin} distinctly for any
$t \in \Q_+$, and $(V_i)_{i \ge 1}$ be the sequence of i.i.d.\ uniform
variables obtained from \Cref{TH:gnedin} applied at time $0$. 
The aim of this section is to show that $(V_i)_{i \ge 1}$ is independent
from $(I_t)_{t \in \Q_+}$. 

Let $0 = t_0 < t_1 < \dots < t_p$. We can build a collection of sequences 
$(\xi^{(k)}_i)_{i \ge 1, k = 0, \dots, p}$ where for $k = 0, \dots, p$
and $i \ge 1$,
\[
    \xi^{(k)}_i = \lim_{n \to \infty} \frac{1}{n} \sum_{j = 1}^n \Indic{j \preceq_k i},
\]
and $\preceq_k$ is the partial order on $\N$ representing $\Ci_{t_k}$ as in 
\Cref{SS:composition}. The sequence of vectors $(\xi^{(0)}_i, \dots, \xi^{(p)}_i)_{i \ge 1}$
is exchangeable. Thus by applying a vectorial version of de Finetti's theorem
we know that there exists a measure $\mu$ on $[0, 1]^{p+1}$ such that conditionally
on $\mu$ the sequence of vectors is i.i.d.\ distributed as $\mu$. We can now
``spread'' the variables $(\xi^{(0)}_i)_{i \ge 1}$ using an independent i.i.d.\ uniform
sequence as in the proof of \Cref{TH:gnedin} to obtain a sequence
$(V_i)_{i \ge 1}$ that is i.i.d.\ uniform conditionally on $\mu$. Thus the 
sequence $(V_i)_{i \ge 1}$ is independent of $\mu$. The interval-partitions
$(I_{t_0}, \dots, I_{t_p})$ can be recovered from the push-forward measures of
$\mu$ by the coordinate maps on $\R^{p+1}$. Thus $(V_i)_{i \ge 1}$ is independent from $(I_t)_{t \in \Q_+}$.

\section{Generator calculation}
\label{app:generator}

Let $n \ge 1$ and let $\hat{Q}_n$ denote the generator of the nested
composition $(\Ci^n_t)_{t \ge 0}$ defined from the transition rates
$(\tl_{b,k};\, 2 \le k \le b < \infty)$.  Let $Q_n$ be the generator of
the restriction to $[n]$ of a $\Lambda$-coalescent.  Here we show that
for any function $f$, from the space of compositions of $[n]$ to $\R$,
\[
    \forall \pi,\; \hat{Q}_n Lf(\pi) = LQ_n f(\pi),
\]
where $L$ is the operator defined in \Cref{S:lambda}. 

We will need additional notations. The space of partitions and
compositions of $[n]$ will be denoted by $P_n$ and $S_n$ respectively. For
$\pi, \pi' \in P_n$, we denote by $q_{\pi, \pi'}$ the transition rate from
$\pi$ to $\pi'$, i.e.\ $q_{\pi,\pi'} = \lambda_{b,k}$ if $\pi$ has $b$
blocks and $\pi'$ is obtained by merging $k$ blocks of $\pi$, and
$q_{\pi,\pi'} = 0$ otherwise.  Similarly for $c, c' \in S_n$ we define
$q_{c, c'}$ to be the transition rate from $c$ to $c'$. Finally, we
denote by $O(\pi)$ the set of compositions of $[n]$ whose blocks are given
by the partition $\pi$, and $\Card(\pi)$ the number of blocks of $\pi$.
Let $\pi \in P_n$ and denote by $b$ the number of blocks of $\pi$, we have
\begin{align*}
    \hat{Q}_n Lf(\pi) &= \sum_{\pi' \in P_n} q_{\pi,\pi'} (Lf(\pi') - Lf(\pi)) \\
                    &= \sum_{\pi' \in P_n} q_{\pi,\pi'} \Big(\sum_{c' \in O(\pi')} \frac{1}{\Card(\pi')!} f(c') - 
                    \sum_{c \in O(\pi)} \frac{1}{\Card(\pi)!} f(c)\Big) \\
                    &= \sum_{\pi' \in P_n} \sum_{c' \in O(\pi')} q_{\pi,\pi'} \frac{1}{\Card(\pi')!} f(c') - 
                    \sum_{c \in O(\pi)} \sum_{k = 2}^b \frac{1}{\Card(\pi)!} \binom{b}{k} \lambda_{b,k} f(c).
\end{align*}
Similarly, we have
\begin{align*}
    LQ_n f(\pi) &= \sum_{c \in O(\pi)} \frac{1}{\Card(\pi)!} Q_n f(c) \\
             &= \sum_{c \in O(\pi)} \frac{1}{\Card(\pi)!} \sum_{c' \in S_n} q_{c,c'} (f(c') - f(c)) \\
             &= \sum_{c \in O(\pi)} \frac{1}{\Card(\pi)!} \sum_{c' \in S_n} q_{c,c'} f(c') 
              - \sum_{c \in O(\pi)} \sum_{k = 2}^b \frac{1}{\Card(\pi)!} \binom{b}{k} \lambda_{b,k} f(c).
\end{align*}
We will end the calculation by showing that for any $c' \in S_n$, the coefficient
in front of the term $f(c')$ in the left sum is the same for both expression.
Let $\pi'$ be the partition associated to $c'$. If $\pi'$ is not obtained
by merging $k$ blocks of $\pi$ for some $k$, then the coefficient of the term $f(c')$
in the sum is $0$ in both expressions. Now suppose that $\pi'$ is obtained by merging
$k$ blocks of $\pi$. In the first expression, we first choose the blocks of $\pi$ that merge to get $\pi'$
and then order the resulting partition to get the composition $c'$. There is only one possible
way to do that and obtain a given $c'$. Thus the coefficient in front of
$f(c')$ is $\lambda_{b,k} / (b-k+1)!$. In the second expression, we first choose an
order to obtain a composition $c$, and then merge its blocks to get the composition $c'$.
There are $k!$ possible orderings of $\pi$, and then exactly one merger of $c$ that
lead to $c'$ (we can take any permutation of the $k$ blocks that merge). 
Thus the coefficient in front of term $f(c')$ is 
\[
    \frac{k!}{b!} \tl_{b,k} = \frac{k!}{b!} \frac{1}{b-k+1} \frac{b!}{k!\,(b-k)!} \lambda_{b,k} = \frac{1}{(b-k+1)!} \lambda_{b,k}.
\]

\section{Measurability of separable rooted trees}
\label{app:tree}

In this section we prove the claim made in the proof of
\Cref{prop:decomposition} that the Borel $\sigma$-field of a
separable rooted tree is induced by the clades of the tree. Let us be
more specific.

We consider a separable real-tree $(T, d)$ with a particular point $\rho
\in T$ that we call the root. For $x,y \in T$, we denote by $[x, y]$ the
unique geodesic with endpoints $x$ and $y$
(see~\cite{evans_probability_2007}). Recall from 
\Cref{app:hierarchies} the fringe subtree of $T$ rooted at $x$
equivalently defined as the \emph{clade}
\[
    C(x) = \Set{y \in T \suchthat x \in [\rho, y]},
\]
see \Cref{F:clade} for an illustration. The claim is that
\[
    \sigma(\Set{C(x),\; x \in T}) = \Bi(T).
\]
\begin{remark}
Our goal in the proof of \Cref{prop:decomposition} is to apply the result to the backbone whose root should be such that clades are the balls of $U$.  This can be done by seeing the backbone
    as having a root ``at infinity''.
\end{remark}

Let $x \in T$ and $\epsilon > 0$, we assume that 
$\epsilon < d(x, \rho)$. We denote by $B(x, \epsilon)$ the
open ball centered in $x$ with radius $\epsilon$, and 
$S(x, \epsilon)$ the sphere of center $x$ and radius
$\epsilon$, i.e.\
\[
    S(x, \epsilon) = \Set{y \in T \suchthat d(x,y) = \epsilon}.
\]
There is a unique point in $a \in [\rho, x] \cap S(x, \epsilon)$. It
is clear that
\[
    B(x, \epsilon) = C(a) \setminus \bigcup_{y \in S(x, \epsilon) \setminus \Set{a}} C(y).
\]
Let $y \in S(x,\epsilon)$, and $0 < \eta < \epsilon$, 
we denote by $y_\eta$ the only point in $[y, x]$ such that $d(y_\eta, y) = \eta$. 
We can write
\[
    \bigcup_{y \in S(x, \epsilon) \setminus \Set{a}} C(y) = \bigcap_{\eta > 0} \bigcup_{y \in S(x, \epsilon) \setminus \Set{a}} C(y_\eta).
\]
The claim is proved if we can show that the union on the right-hand side is
countable. This holds due to the separability of $(T, d)$. To see that
notice that by uniqueness of the geodesic, if $y$ and $y'$ are
such that $y_\eta \ne y'_\eta$, then $d(y, y') > \eta$. Thus
if the set $\Set{y_\eta \suchthat y \in S(x, \epsilon) \setminus \Set{a}}$ is not countable,
we can find an uncountable subset of $S(x,\epsilon)$ such that any two points
lie at distance at least $\eta$. This is not possible due to separability. 

\begin{figure}
    \center
    \includegraphics{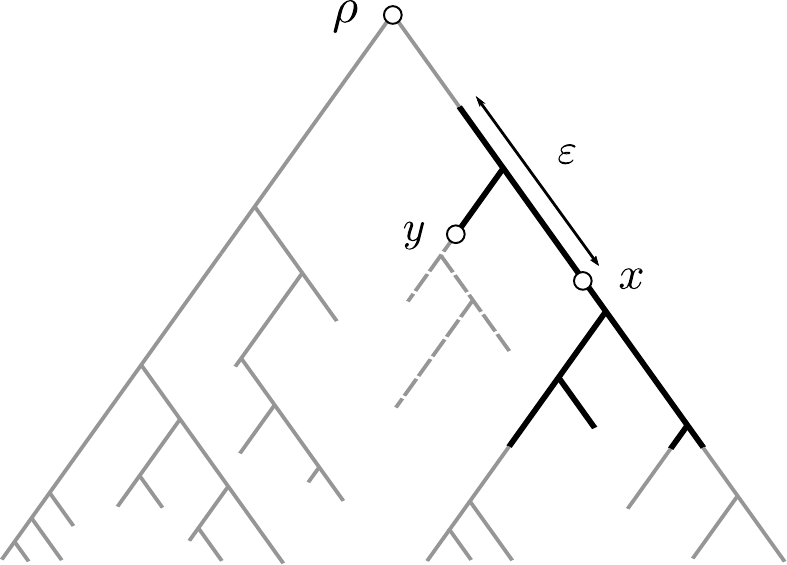}
    \caption{A tree rooted at $\rho$. The ball of radius $\epsilon$ and center
    $x$ is represented by the black bold lines. An example of $y \in S(x, \epsilon)$ is
    given, and its corresponding clade $C(y)$ is represented by grey dashed lines.} \label{F:clade}
\end{figure}

\section{Comb completion}
\label{app:combCompletion}

In this section we prove \Cref{prop:combCompletion}, i.e.\ that
the backbone of a comb is complete up to the addition of a countable number
of points. We start
from a nested interval-partition $(I_t)_{t \ge 0}$. 
We define
\begin{multline*}
    \Ri = \{x \in [0, 1] : \exists s_x, t_x
    \text{ s.t.\ $x$ is the right endpoint}\\
    \text{of an interval component of $I_u$
    for $u \in [s_x,t_x]$}\}
\end{multline*}
and 
\begin{multline*}
    \Li = \{x \in [0, 1] : \exists s_x, t_x
    \text{ s.t.\ $x$ is the left endpoint}\\
    \text{of an interval component of $I_u$ %
    for $u \in [s_x,t_x]$}\}.
\end{multline*}

We now work with a subset of $[0, 1] \times \Set{0, r, \ell}$. Let 
\[
    \bar{I} = ([0, 1] \times \Set{0}) \cup (\Ri \times \Set{r}) \cup (\Li \times \Set{\ell}).
\]
We will simply write $x$ for $(x, 0)$,
$x_r$ for $(x, r)$ if $x \in \Ri$ and $x_\ell$ for $(x,\ell)$ if $x \in \Li$. 
We extend $d_I$ to $\bar{I}$ in the following way. Let $x < y$, we
define
\begin{gather*}
    \bar{d}_I(x, y) = \bar{d}_I(x, y_\ell) = \bar{d}_I(x_r, y_\ell) = \bar{d}_I(x_r, y) = \sup_{\ClosedInterval{x,y}} f_I \\
    \bar{d}_I(x, y_r) = \bar{d}_I(x_r, y_r) = \sup_{\COInterval{x,y}} f_I \\
    \bar{d}_I(x_\ell, y) = \bar{d}_I(x_\ell, y_\ell) = \sup_{\OCInterval{x,y}} f_I \\ 
    \bar{d}_I(x_\ell, y_r) = \sup_{\OpenInterval{x,y}} f_I
\end{gather*}
and $\bar{d}_I(x_r, x_\ell) = f(x)$. We use symmetrized definitions if $x > y$.
It is straightforward to check that $\bar{d}_I$ is a pseudo-ultrametric.
We will denote by $\cS_I$ the backbone associated to this UMS, and $d_{\cS_I}$ the restriction
of the tree metric to $\cS_I$, i.e.
\[
    \forall (x',t),\; (y',s) \in \cS_I,\; d_{\cS_I}\big( (x', t), (y', s) \big) = 
    \max\Set{\bar{d}_I(x', y') - \frac{t+s}{2}, \frac{\Abs{t - s}}{2}}.
\]

\begin{lemma}
    The backbone $(\cS_I, d_{\cS_I}, \Leb)$ associated to $(\bar{I}, \bar{d}_I, \Leb)$
    is a complete metric space.
\end{lemma}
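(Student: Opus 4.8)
The plan is to prove directly that every Cauchy sequence in $(\cS_I, d_{\cS_I})$ has a limit in $\cS_I$ (the measure $\Leb$ is irrelevant for completeness). So fix a Cauchy sequence $\big((x'_n, t_n)\big)_{n \ge 1}$ in $\cS_I$, where $x'_n \in \bar{I}$ and $t_n \ge f(x'_n)$, and let $a_n \in [0,1]$ be the base point of $x'_n$ (forgetting whether $x'_n$ is a plain point or a left/right face). From $d_{\cS_I}\big((x',t),(y',s)\big) \ge \tfrac{|t-s|}{2}$ the real sequence $(t_n)$ is Cauchy, so $t_n \to t_\infty \ge 0$. By compactness of $[0,1]$ we may pass to a subsequence with $a_n \to a_\infty \in [0,1]$; since a Cauchy sequence possessing a convergent subsequence converges, it suffices to find the limit of this subsequence, and passing to a further subsequence we may assume one of: (i) $a_n = a_\infty$ for all $n$; (ii) $a_n < a_\infty$ and $a_n \uparrow a_\infty$; (iii) $a_n > a_\infty$ and $a_n \downarrow a_\infty$.

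The computation of $\bar{d}_I$ obeys a simple rule: for two points of $\bar{I}$ at distinct locations $p < q$, $\bar{d}_I$ equals the supremum of $f_I$ over the interval from $p$ to $q$ that is closed at $p$ unless the $p$-point is a left face, and closed at $q$ unless the $q$-point is a right face; in particular this interval always contains $(p,q)$ and is contained in $[p,q]$. In case (i) the $x'_n$ lie in the finite set $\{a_\infty, (a_\infty)_r, (a_\infty)_\ell\} \cap \bar{I}$, and an elementary argument with $d_{\cS_I}$ (using that any two of these points lie, at any common height $t$, at distance $\max(f_I(a_\infty)-t,0)$) shows that $(x'_n, t_n)$ converges to a point $(x'_\infty, t_\infty) \in \cS_I$ with $x'_\infty$ at location $a_\infty$, since $t_\infty \ge f(x'_\infty)$.

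Cases (ii) and (iii) are mirror images, so consider (ii). Let $L := \lim_n \sup_{[a_n, a_\infty)} f_I$, which exists as a non-increasing limit. For $n \le m$ the rule above gives $\sup_{(a_n, a_m)} f_I \le \bar{d}_I(x'_n, x'_m) \le \sup_{[a_n, a_m]} f_I$, and $\bar{d}_I(x'_n, x'_m) \le d_{\cS_I}\big((x'_n,t_n),(x'_m,t_m)\big) + \tfrac{t_n + t_m}{2}$; letting $m \to \infty$ and using that the $d_{\cS_I}$-term tends to $0$ while $t_m \to t_\infty$ yields $\sup_{(a_n, a_\infty)} f_I \le t_\infty + o_n(1)$, hence $L \le t_\infty$ (in particular $L < \infty$). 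Now distinguish two cases. If $f_I(a_\infty) \le t_\infty$, take $x^* = a_\infty$; the relevant interval for $\bar{d}_I(x'_n, a_\infty)$ is closed at $a_\infty$, so $\bar{d}_I(x'_n, a_\infty) = \max\big(\sup_{J_n} f_I, f_I(a_\infty)\big)$ with $J_n \in \{[a_n, a_\infty), (a_n, a_\infty)\}$, whence $\bar{d}_I(x'_n, a_\infty) \to \max(L, f_I(a_\infty)) \le t_\infty$, and $(a_\infty, t_\infty) \in \cS_I$ because $f(a_\infty) = f_I(a_\infty) \le t_\infty$. If $f_I(a_\infty) > t_\infty$, then for every $u$ in the non-degenerate interval $(t_\infty, f_I(a_\infty))$ and all large $n$ one has $[a_n, a_\infty) \subseteq I_u$ but $a_\infty \notin I_u$, so $a_\infty$ is the right endpoint of an interval component of $I_u$ over a whole range of $u$; hence $a_\infty \in \Ri$, the face $(a_\infty)_r$ exists, $\bar{d}_I(x'_n, (a_\infty)_r) = \sup_{J_n} f_I \to L \le t_\infty$ with $J_n$ as above, and a direct check gives $f\big((a_\infty)_r\big) \le t_\infty$, so $\big((a_\infty)_r, t_\infty\big) \in \cS_I$. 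In both cases, writing $x^*$ for the chosen point and $\ell^* := \lim_n \bar{d}_I(x'_n, x^*) \le t_\infty$,
\[
    d_{\cS_I}\big((x'_n, t_n), (x^*, t_\infty)\big) = \max\Big(\bar{d}_I(x'_n, x^*) - \tfrac{t_n + t_\infty}{2},\ \tfrac{|t_n - t_\infty|}{2}\Big) \longrightarrow \max(\ell^* - t_\infty,\, 0) = 0,
\]
so $(x^*, t_\infty)$ is the required limit; case (iii) is handled identically with $(a_\infty)_\ell$ in place of $(a_\infty)_r$.

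The main obstacle is the analysis of $L$: proving $L \le t_\infty$ and then evaluating $\lim_n \bar{d}_I(x'_n, x^*)$ requires carefully combining the Cauchy condition, the precise closed/half-open interval conventions defining $\bar{d}_I$ (which is exactly what the extra left and right faces in $\bar{I}$ are there to encode), and the bound $f_I(a_n) \le t_n$ along the sequence. The secondary point is checking that the candidate limit lies in $\cS_I$, i.e.\ that its height dominates $f$; this is what forces the case split on $f_I(a_\infty)$ versus $t_\infty$ and, in the face case, the routine but essential verification that $a_\infty \in \Ri$. Compactness of $[0,1]$ is the decisive input: it is precisely why adjoining the countably many faces is enough to make the backbone complete, something that would fail for a comb over a non-compact base.
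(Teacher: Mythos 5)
Your proof is correct and follows essentially the same route as the paper's: extract a monotone subsequence of base points, use the Cauchy condition to bound $\lim_n \sup_{[a_n, a_\infty)} f_I$ by $t_\infty$, and identify the limit as either the plain point $a_\infty$ or the appropriate face in $\Ri$ (resp.\ $\Li$). Your version is in fact slightly more careful than the paper's, which treats only the approach-from-the-left case, splits instead on whether $a_\infty \in \Ri$, and does not explicitly verify that the candidate limit satisfies the height constraint $t_\infty \ge f(x^*)$ required for membership in $\cS_I$.
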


\begin{proof}
    Consider $(x'_n, t_n)_{n \ge 1}$ a Cauchy sequence in $\bar{I}$ for
    the metric $d_{\cS_I}$. As
    \[
        \frac{\Abs{t_n - t_m}}{2} \le d_{\cS_I}\big( (x'_n, t_n), (x'_m, t_m) \big),
    \]
    the sequence $(t_n)_{n \ge 1}$ is Cauchy and converges to a limit
    that we denote by $t$. Each point $x'_n$ can be written as $x'_n =
    (x_n, a_n)$ with $x_n \in [0, 1]$ and $a_n \in \Set{0, r, \ell}$. The
    sequence $(x_n)_{n \ge 1}$ admits a subsequence that converges to a
    limit $x$ for the usual topology in $[0, 1]$. Without loss of
    generality we can assume that $(x_n)_{n \ge 1}$ is non-decreasing and
    converges to $x$. 

    Using the fact that the sequence is Cauchy, we know that
    \[
        \lim_{n \to \infty} \sup_{m \ge n} \bar{d}_I(x'_n, x'_m) - \frac{t_n + t_m}{2} \le 0,
    \]
    which directly implies that
    \[
        \lim_{\epsilon \to 0} \sup_{\COInterval{x - \epsilon, x}} f_I \le t.
    \]

    Suppose that $x \in \Ri$. By definition of $\bar{d}_I$ and the above
    remark,
    \[
        \lim_{n \to \infty} \bar{d}_I(x'_n, x_r) - \frac{t_n + t}{2} \le 0.
    \]
    Thus the sequence $(x'_n, t_n)_{n \ge 1}$ converges to $(x_r, t)$.

    Now suppose that $x \not\in \Ri$. We claim that
    \[
        \lim_{\epsilon \to 0} \sup_{\COInterval{x-\epsilon, x}} f_I = f(x).
    \]
    As $x \not\in \Ri$ we directly know that
    \[
        \lim_{\epsilon \to 0} \sup_{\COInterval{x-\epsilon, x}} f_I \ge f(x).
    \]
    Suppose that the above limit is strictly greater than $f(x)$. Then we
    can find a non-decreasing sequence $(y_n)_{n \ge 1}$ converging to
    $x$ in the usual topology such that $f(y_n) \downarrow \lambda > f(x)$ as
    $n$ goes to infinity.  Let $\eta < \lambda - f(x)$.  Notice that the set
    $\Set{y \in [0, 1] \suchthat f(y) > \lambda - \eta}$ is closed in the usual
    topology, as it is the complement of $I_{\lambda-\eta}$. This shows that
    $x$ belongs to this set, which is a contradiction. Our claim is
    proved. Similarly to above, it is now immediate that 
    \[
        \lim_{n \to \infty} \bar{d}_I(x'_n, x) - \frac{t_n + t}{2} \le 0.
    \]
    and that $(x'_n, t_n)_{n \ge 1}$ converges to $(x, t)$. 
\end{proof}

\begin{remark}
    This completion is already present in the compact case 
    in~\cite{lambert_comb_2017}. In this case, we have 
    $\Ri = \Li = \Set{f_I > 0}$.
\end{remark}

    \section{The link between dust and the Banach-Ulam problem} 
    \label{app:banachUlam}

In this section we prove \Cref{prop:banachUlam}. We prove this
result by constructing a solution to the so-called Banach-Ulam problem.
This problem can be formulated as follows: is it possible to find a 
space $X$ with a probability measure $\mu$ on the power-set
$\mathcal{P}(X)$ of $X$ such that $\mu(\Set{x}) = 0$ for all $x \in X$?

Recall that a UMS $(U, d, \Ui, \mu)$ is called a Borel UMS if 
$\Ui$ is the Borel $\sigma$-field of $(U, d)$. The support of the measure
$\mu$, $\supp(\mu)$, is defined as the intersection of all balls with positive mass.
Equivalently, it can be defined as 
\[
    \supp(U) = \Set{x \in U \suchthat \forall t > 0,\; \mu(B(x, t)) > 0}.
\]
We start with the following lemma, which gives a necessary and sufficient
condition for the coalescent sampled from $U$ to have dust in terms of
the support of $\mu$.

\begin{lemma} \label{lem:dustSupport}
    Let $(U, d, \Ui, \mu)$ be a UMS, and let $(\Pi_t)_{t \ge 0}$ be the
    associated coalescent. Then $(\Pi_t)_{t \ge 0}$ has dust \tiff
    $\mu(\supp(\mu)) < 1$. 
\end{lemma}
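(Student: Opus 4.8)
The claim is that the coalescent sampled from a UMS $(U,d,\Ui,\mu)$ has dust if and only if $\mu(\supp(\mu)) < 1$. Recall that the coalescent $(\Pi_t)_{t \ge 0}$ has \emph{dust} means that, with positive probability, for some $t > 0$ the partition $\Pi_t$ has a block which is a singleton (equivalently, by exchangeability and Kingman's theorem, a block of asymptotic frequency zero). I would phrase everything in terms of a single sampled point $X \sim \mu$ and the mass of the ball around it.

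\medskip

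\textbf{Key identity.} First I would establish the dictionary between "dust" and "$X$ falls outside the support". Fix $t > 0$ and let $X$ be $\mu$-distributed. For a fixed deterministic $x$, the asymptotic frequency of the block of $\Pi_t$ containing $1$ (when $X_1 = x$) is $\mu(B(x,t))$ by the law of large numbers, since $i \sim_{\Pi_t} 1 \iff d(X_i, x) \le t \iff X_i \in \overline{B}(x,t)$, and one checks $\mu(\overline{B}(x,t))=\mu(B(x,t))$ up to the usual issue that balls of the ultrametric are both open and closed, handled by passing to a countable sequence $t_n \downarrow t$ or $t_n \uparrow t$ through rationals; here the ball $\sigma$-field assumption $\ball \subseteq \Ui$ is what makes $x \mapsto \mu(B(x,t))$ measurable. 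Hence the block of $\Pi_t$ containing index $1$ is a dust block (frequency $0$) if and only if $\mu(B(X_1, t)) = 0$. Taking a union over rational $t > 0$ and invoking exchangeability, $(\Pi_t)_{t\ge 0}$ has dust with positive probability if and only if
\[
    \Prob*{\big}{\exists t > 0 : \mu(B(X,t)) = 0} > 0.
\]

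\medskip

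\textbf{Identifying the event with the support.} Now I would observe that $\Set{\exists t>0: \mu(B(X,t))=0}$ is, by definition, exactly the event $\Set{X \notin \supp(\mu)}$, using the description $\supp(\mu) = \Set{x \in U : \forall t > 0,\ \mu(B(x,t)) > 0}$ given just above the lemma statement. Therefore the displayed probability equals $\mu(U \setminus \supp(\mu)) = 1 - \mu(\supp(\mu))$. Combining with the previous paragraph: $(\Pi_t)_{t\ge 0}$ has dust $\iff$ $1 - \mu(\supp(\mu)) > 0$ $\iff$ $\mu(\supp(\mu)) < 1$, which is the claim. The one measurability point worth a sentence is that $\Set{X \notin \supp(\mu)}$ is a countable union over rational $t$ of the $\Ui$-measurable sets $\Set{x : \mu(B(x,t)) = 0}$, so it lies in $\Ui$ and has a well-defined $\mu$-measure; this again uses $\ball \subseteq \Ui$ from \Cref{def:UMS}(iii).

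\medskip

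\textbf{Main obstacle.} The only genuinely delicate point is the open-versus-closed ball subtlety in the law-of-large-numbers step: $i \sim_{\Pi_t} j$ is governed by $d(X_i,X_j) \le t$, i.e.\ by \emph{closed} balls, whereas $\supp(\mu)$ and the definition of the function $f$ from \Cref{def:backbone} are phrased with \emph{open} balls $\mu(B(x,t))$. In an ultrametric space every closed ball of radius $t$ is an open ball of radius $t'$ for any $t' \in (t, t+\epsilon)$ small enough (balls are clopen and the radius only matters up to the discrete set of "jump" values along a given branch), so $\mu(\overline B(x,t)) = \inf_{t' > t} \mu(B(x,t'))$ and one reconciles the two by taking the event over rational $t$ and using monotonicity in $t$; I would spell this reconciliation out carefully since it is where a careless argument would slip. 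Everything else is a direct unwinding of definitions.
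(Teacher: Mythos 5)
Your proof is correct and follows essentially the same route as the paper's: both identify, via the law of large numbers and Kingman's theorem, the event that index $i$ lies in a dust block with the event $X_i \notin \supp(\mu)$, and then conclude by computing $\mu(U \setminus \supp(\mu))$. Your extra care with the open-versus-closed ball reconciliation and the $\Ui$-measurability of $\{x : \mu(B(x,t))=0\}$ is a welcome refinement of details the paper passes over silently, but it does not change the argument.
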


\begin{proof}
    Let $(X_i)_{i \ge 1}$ be an i.i.d.\ sequence in $U$ distributed as $\mu$
    and let $(\Pi_t)_{t \ge 0}$ be the coalescent obtained as above.
    We say that $i$ is in the dust of the coalescent if there exists
    $t > 0$ such that $\Set{i}$ is a singleton block of $\Pi_t$. 
    We show that a.s.
    \[
        \text{$i$ is in the dust} \iff X_i \not\in \supp(\mu).
    \]
    Suppose that $X_i \in \supp(\mu)$. Then for any $t > 0$, $\mu(B(X_i,
    t)) > 0$, thus a.s.\ there are infinitely many other variables
    $(X_j)_{j \ge 1}$ in $B(X_i, t)$. Thus $X_i$ is in an infinite block
    of $\Pi_t$. Conversely suppose that $i$ is not in the dust, i.e.\
    that for any $t > 0$, $\Set{i}$ is not a singleton block.  Using
    Kingman's representation theorem for exchangeable partitions, we know
    that the block of $i$ is a.s.\ infinite and has a positive asymptotic
    frequency $f_i$. The law of large numbers shows that $f_i =
    \mu(B(X_i, t)) > 0$.
\end{proof}

\begin{proof}[Proof of \Cref{prop:banachUlam}]
    Let us start by showing that (i) implies (iii). Let $(U, d, \Ui,
    \mu)$ be a Borel UMS with associated coalescent $(\Pi_t)_{t \ge 0}$.
    Suppose that $(\Pi_t)_{t \ge 0}$ has dust. According to
    \Cref{lem:dustSupport}, we have $\mu(\supp(\mu)) < 1$.
    Consider $t > 0$ and let
    $(B^{t}_{\alpha})_{\alpha \in A_t}$ be the collection of open balls
    of radius $t$ with zero mass, where $A_t$ is just an index set. We
    know that
    \[
        \bigcup_{t > 0} \bigcup_{\alpha \in A_t} B^t_\alpha = U \setminus \supp(\mu). 
    \]
    Using the continuity from below of the measure $\mu$, we can find an
    $\epsilon > 0$ such that $\mu(\bigcup_{\alpha \in A_\epsilon} B^\epsilon_\alpha) > 0$. 
    We now consider the equivalence relation
    \[
        x \sim y \iff d(x,y) < \epsilon
    \]
    and denote by $X$ the quotient space of 
    $\bigcup_{\alpha \in A_\epsilon} B^\epsilon_\alpha$ for the relation
    $\sim$. We define the quotient map as
    \[
        \phi \colon \begin{cases}
            U \to X \\
            x \mapsto \Set{y \in U \suchthat d(x,y) < \epsilon}.
        \end{cases}
    \]
    We claim that $\phi$ is continuous when
    $U$ is equipped with the metric topology induced by $d$, and 
    $X$ is equipped with the discrete topology $\mathcal{P}(X)$. 
    Let $C \subset X$, then
    \[
        \phi^{-1}(C) = \bigcup_{x \in \phi^{-1}(C)} B(x, \epsilon) 
    \]
    which is an open subset of $U$. We call $\mu_X$ the push-forward 
    measure of $\mu$ by the map $\phi$. The measure
    $\mu_X / \mu_X(X)$ is a diffuse probability measure defined 
    on $\mathcal{P}(X)$ as required. Thus, $(X, \mathcal{P}(X), \mu_X)$
    is a solution to the Banach-Ulam problem.

    Using the terminology from~\cite{fremlin_real_1993}, this
    proves that the cardinality of $X$ is a real-valued cardinal
    (see Notation~1C in~\cite{fremlin_real_1993}). According to Ulam's
    theorem (see Theorem~1D in~\cite{fremlin_real_1993}), real-valued
    cardinals fall into two classes: atomlessly-measurable cardinals and 
    two-valued-measurable cardinals. The cardinal of $X$ is
    atomlessly-measurable. To see this, one can for example notice that
    our measurability assumption on $d$ implies that the cardinality of $U$
    (and thus that of $X$) is not larger than the continuum. (If this does
    not hold, then the diagonal does not belong to the product $\sigma$-field
    $\mathcal{P}(U) \otimes \mathcal{P}(U)$ and the metric $d$ is not
    measurable.) Finally, using Theorem~1D of~\cite{fremlin_real_1993} proves
    (iii).

    \medskip

    The fact that (ii) implies (i) is obvious, it remains to show that 
    (iii) implies (ii). Suppose that there exists an extension
    of the Lebesgue measure to all subsets of $\R$, let us denote by
    $\overline{\Leb}$ its restriction to $[0, 1]$. Let $(\Pi_t)_{t \ge
    0}$ be any coalescent with dust. By \Cref{TH:coalescentGeneral} we
    can find a nested interval-partition $(I_t)_{t \ge 0}$ such that 
    the paintbox based on $(I_t)_{t \ge 0}$ is distributed as 
    $(\Pi_t)_{t \ge 0}$. Let $d_I$ be the corresponding comb metric on
    $[0, 1]$. Then $([0, 1], d_I, \Bi_I([0, 1]), \overline{\Leb})$
    is a UMS, where $\Bi_I([0, 1])$ refers to the Borel $\sigma$-field
    induced by $d_I$ and $\overline{\Leb}$ is restricted to that
    $\sigma$-field. The coalescent obtained by sampling from this UMS is
    distributed as $(\Pi_t)_{t \ge 0}$.
\end{proof}

\end{document}